\journal{}
\let\today\relax
\def\ps@pprintTitle{%
    \let\@oddhead\@empty
    \let\@evenhead\@empty
    \def\@oddfoot{\footnotesize\itshape
         {} \hfill\today}%
    \let\@evenfoot\@oddfoot
    }
\newtheorem{theorem}{Theorem}[section]
\newtheorem{proposition}[theorem]{Proposition}
\newtheorem{remark}[theorem]{Remark}
\newtheorem{lemma}[theorem]{Lemma}
\newtheorem{fact}[theorem]{Fact}
\newtheorem{corollary}[theorem]{Corollary}
\def\d{\mathrm{d}}
\begin{document}
\begin{frontmatter}
\title{Speed of Random Walks in Dirichlet Environment on a Galton-Watson Tree}
\author[label 1]{Dongjian Qian}
\author[label 1]{Yang Xiao}
\affiliation[label 1]{organization={School of Mathematical Sciences},
            addressline={Fudan University}, 
            city={Shanghai},
            country={People's Republic of China}}
	\begin{abstract}
		This paper deals with a transient random walk in Dirichlet environment, or equivalently a linearly edge reinforced random walk, on a Galton-Watson tree. We compute the stationary distribution of the environment seen from the particle of an edge reinforced random walk. We obtain a formula for the speed and give a necessary and sufficient condition for the walk to have a positive speed under some moment conditions on the offspring distribution of the tree.
\end{abstract}  
 \end{frontmatter}
\section{Introduction}
An edge reinforced random walk (ERRW) is a non-Markov process that tends to favor previously visited edges, first introduced by Coppersmith and Diaconis \cite{diaconis1986errw}. Let $G=(V,E)$ be an oriented graph and $(\alpha_e)_{e\in E}$ a set of positive deterministic weights. For two adjacent points $x,y\in V$, we denote by $(x,y)$ the edge from $x$ to $y$. An oriented edge $e$ is thus written as $e:=(\underline e,\overline e)$, where $\underline{e}$ and $\overline{e}$ are the tail and head of $e$ respectively. We define the ERRW $(X_n)_{n\ge 0}$ on $G$ with transition probabilities 
 	\begin{equation}\label{eq_defnERRW}
		\mathbf P(X_{n+1}=y|X_1,\cdots,X_n)=\frac{\alpha_{(X_n,y)}+N^{X}_{(X_n,y)}(n)}{\sum_{\underline{e}=X_n}\alpha_e+N^{X}_e(n)}\textbf{1}_{\{(X_n,y)\in E \}},
	\end{equation}
where $N_e^{X}(n):=\#\{1\leq k\leq n:(X_{k-1},X_{k})=e\}$. In words, if the process is at a vertex $x$ at time $n$, it will choose for its next step some neighbour $y$ with probability proportional to $\alpha_{(x,y)}+N^X_{(x,y)}(n)$.

By means of Polya's urns, it is well-known that this process can be represented as a mixture of Markov chains called Random Walk in Dirichlet Environment (RWDE), a special case of random walk in random environment. Specifically, independently at each vertex $x$, pick a random vector with positive entries $(\eta_e)_{\underline e=x}=(\eta(\underline e,\overline{e}))_{\underline e=x}$  which satisfies $\sum_{\underline{e}=x}\eta_e=1$.  The joint law of $(\eta_e)_{\underline e=x}$ is taken to be the Dirichlet distribution with parameters $(\alpha_e)_{\underline e=x}$, i.e. it has density 
	\begin{equation*}
		\frac{\Gamma(\sum_{\underline e=x} \alpha_e)}{\prod_{\underline e=x}\Gamma(\alpha_e)}\prod_{\underline e=x} (y_e^{\alpha_{e}-1}\textbf{1}_{\{0<y_e<1\}})\textbf{1}_{\{\sum _{\underline e=x}y_e=1\}}.
	\end{equation*}
We call $(\eta_e)_{e\in E}$ a Dirichlet environment, and denote its distribution by ${\rm DE}(\cdot)$.  Given the environment $(\eta_e)_{e\in E}$, we define the RWDE $(X_n)_{n\geq0}$ as the Markov chain on $G$ with transition probabilities
	\begin{equation*}
		{{\rm P}}^{\eta}(X_{n+1}=y|X_n=x)=\eta(x,y),\qquad (x,y)\in E. 
	\end{equation*}
We write ${\rm P}^{\eta}_x(\cdot)$ for the quenched measure ${\rm P}^{\eta}(\cdot|X_0=x) $. The connection between ERRW and RWDE is as follows: the RWDE $(X_n)_{n\geq0}$ under the annealed measure $\mathbf{P}(\cdot):=\int {{\rm P}}^\eta(\cdot){\rm DE}(\d \eta)$ is an ERRW.

In the article,  we focus on the case where $G$ is a super-critical Galton-Watson tree $\mathbb T$ with some offspring distribution $(p_n)_{n\geq0}$, and hence $m:=\sum_{n\geq0}np_n>1$. We consider it as an oriented graph where each edge has two directions (from parent to child and child to parent). We write $\rho$ for the root, and $xi,1\leq i\leq\nu(x)$, resp. $x_*$, for the children, resp. the parent, of a vertex $x$. Often, we will artificially add a parent $\rho_*$ to the root $\rho$, and we will call the new tree $\mathbb T_*$. Since we are interested in the transient case, we will preferably work on the event $\mathcal S$ that $\mathbb T_*$ is infinite. 

Fix two positive numbers $\alpha_p,\alpha_c$. We study the ERRW $(X_n)_{n\geq 0}$ on the tree $\mathbb T_*$ named $(\alpha_p,\alpha_c)$-ERRW where the weights $(\alpha_e)_{e\in E}$ are given by $\alpha_{(x,x_*)}=\alpha_p$ and $\alpha_{(x_*,x)}=\alpha_c$, $x\in \mathbb T$, with $X_0=\rho$. It is a generalization of the model given in \cite{pemantle1988phase}. Similar to the $\lambda$-biased random walk (see, for example, \cite{lyons1996biased}), the $(\alpha_p,\alpha_c)$-ERRW introduces an asymmetry between moving up or down in the tree.   By the connection mentioned above, $(X_n)_{n\geq 0}$ can be identified with an $(\alpha_p,\alpha_c)$-RWDE on $\mathbb T_*$. We denote by  $\mathbb P(\cdot)$ the annealed distribution of $(X_n)_{n\geq 0}$ when we also average over the tree $\mathbb T_*$, and by $\mathbb E$ the associated expectation. To sum up, the walk $(X_n)_{n\geq 0}$ can be studied under three levels of randomness: under ${\rm P}^{\eta}$ (both the tree $\mathbb T_*$ and the environment $(\eta_e)_{e\in E}$ are fixed, the walk is a Markov chain), under $\mathbf P$ (the tree $\mathbb T_*$ is fixed and the walk is an $(\alpha_p,\alpha_c)$-ERRW on $\mathbb T_*$), and under $\mathbb P$ where we average over everything. We point out that what we defined is a directed ERRW on a tree, which corresponds, in the setting of \cite{pemantle1988phase}, to an undirected ERRW with weights $(2\alpha_p-1,2\alpha_c)$. 

\medskip
Let $(A_i,1\leq i\leq\nu)$ have the distribution under $\mathbb P$ of 
	\begin{equation}\label{eq_Aisratioomega}
  \left(\frac{\eta(\rho,\rho1)}{\eta(\rho,\rho_*)},\cdots,\frac{\eta(\rho,\rho\nu(\rho))}{\eta(\rho,\rho_*)}\right).
	\end{equation}
From Lyons and Pemantle \cite{lyons1992random}, we know that the walk $(X_n)_{n\ge 0}$ is transient $\mathbb P(\cdot|\mathcal{S})$-a.s. if and only if
	\begin{equation}\label{eq_transience}
		\inf_{t\in[0,1]}\mathbb E\left[\sum_{i=1}^\nu A_i^t\right]>1,
	\end{equation} 
which is specified in Proposition \ref{prop_when_transient_RWDE} in our case. Let $|x|$ be the generation of the vertex $x$ and set $|\rho_*|:=-1$. Under $\mathbb P(\cdot|\mathcal{S})$, the quantity
\begin{equation*}
    v:= \lim_{n\to\infty}|X_n|/n
\end{equation*}    
is called {\bf speed} of the random walk $(X_n)_{n\geq0}$. This limit exists $\mathbb P(\cdot|\mathcal{S})$-a.s. indeed and is deterministic by \cite{gross2004marche}.

Assuming that the offspring distribution has high enough moments, our first result gives a necessary and sufficient condition for the speed $v$ to be positive, so that the walk drifts towards infinity linearly.
As far as we know, the only available result was in the case $\alpha_p=1$, $\alpha_c=1/2$ on a $d$-regular tree, $d\geq 2$  \cite{aidekon2008transient} (see also \cite{collevecchio2006limit}).
Note that the ratios $(A_i)_{1\le i\le \nu}$ in Dirichlet environment are not bounded from above and below, so the general criteria for random walks in random environment on Galton--Watson trees put forward in \cite{aidekon2008transient} cannot be applied. Let $f(s):=\sum_{n\geq0}s^{n}p_n$ denote the generating function of the offspring distribution and $q$ the smallest root of $f(s)=s$ in $[0,1]$, i.e. $q$ is the extinction probability $1-\mathbb P(\mathcal{S})$ of the Galton--Watson tree. We introduce 
		\begin{align*}
			 d:=\min\{n\ge 1:p_n>0 \},\,r:=\sup\{k:\mathbb E[A^{-k}]f^\prime(q)<1\},
	\end{align*}
where $A$ is a generic random variable distributed as $A_1$ conditioned on $\nu\ge 1$.
\begin{theorem}\label{positive speed Thm}
Suppose that $m\in (1,\infty)$, that the $(\alpha_p,\alpha_c)$-ERRW $(X_n)_{n\ge 0}$ is transient, i.e.  \eqref{eq_transience} holds, and that 
\begin{equation}\label{moment condition}
    \mathbb E[\nu^{d\alpha_c+3+\alpha_p}]<\infty .
\end{equation}
The speed $v$ is positive if and only if:
		\begin{itemize}
			\item $2r-\alpha_c+\alpha_p-1>0$ when $p_0=0,p_1>0$;
			\item $(2d-1)\alpha_c+\alpha_p-1> 0$ when $p_0=0,p_1=0$;
			\item $r-\alpha_c+\alpha_p-1>0$ when $p_0>0$.
		\end{itemize}
	\end{theorem}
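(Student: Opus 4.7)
The plan is to reduce positivity of the speed to finiteness of the annealed expectation of a regeneration time, and then read off each of the three conditions from sharp moment bounds on excursion times into the sub-trees hanging off the ``spine'' of the walk.

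First I would set up regeneration times. Transience and \eqref{eq_transience} give $|X_n|\to\infty$ $\mathbb P(\cdot\mid\mathcal S)$-almost surely, so one may define
\begin{equation*}
\tau_1 \;:=\; \inf\bigl\{n\ge 1 : |X_k|<|X_n|\text{ for all }k<n\text{ and }X_m\ne X_{n-1}\text{ for all }m\ge n\bigr\},
\end{equation*}
the first time the walk reaches a new maximal level after which it never backtracks. An i.i.d.\ decomposition of the walk between successive regeneration times, in the spirit of \cite{aidekon2008transient}, combined with the ergodic theorem, yields
\begin{equation*}
v \;=\; \frac{\mathbb E[\,|X_{\tau_1}|\mid \tau_1<\infty\,]}{\mathbb E[\,\tau_1\mid\tau_1<\infty\,]}.
\end{equation*}
Since $|X_{\tau_1}|\ge 1$ on $\{\tau_1<\infty\}$, we have $v>0$ if and only if $\mathbb E[\tau_1\mid\tau_1<\infty]<\infty$; the speed problem reduces to a sharp moment problem for $\tau_1$.

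Next I would bound $\tau_1$ by a sum over excursions into the ``bushes'' attached to the spine. For a vertex $x$ let $\beta_x:=E^\eta_x[H_{x_*}]$ denote the quenched expected hitting time of the parent. The Markov-chain identity
\begin{equation*}
\eta(x,x_*)\,\beta_x \;=\; 1 + \sum_{i=1}^{\nu(x)}\eta(x,xi)\,\beta_{xi}
\end{equation*}
is a branching recursion on $\mathbb T$. Together with the Dirichlet gamma identity $\mathbb E[\eta(x,x_*)^{-k}]<\infty\iff k<\alpha_p$ and the joint moments of the ratios $A_i$ defined in \eqref{eq_Aisratioomega}, this turns annealed moments of $\beta_x$ into branching functionals on $\mathbb T$ whose convergence is governed precisely by the product $\mathbb E[A^{-k}]f'(q)$. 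The contribution of one bush to $\tau_1$ is bounded by $\beta_x$ times the (quenched geometric) number of entries, so summing over all bushes off the spine bounds $\tau_1$ by an i.i.d.\ sum.

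I would then treat the three cases separately. When $p_0>0$, the off-spine bushes are Galton--Watson trees (some conditioned on extinction), and the tail of the largest $\beta$-value is controlled by the branching exponent $\mathbb E[A^{-k}]f'(q)$, whose critical $k$ is $r$; combined with one $\eta(x,x_*)^{-1}$ factor on the spine this yields the threshold $r-\alpha_c+\alpha_p-1>0$. When $p_0=0,\,p_1>0$, bushes are infinite but may contain arbitrarily long linear chains of degree-one vertices on which the walk behaves as a biased one-dimensional RWRE; the escape-time tail then acquires two $r$-factors (one for each direction along the chain), leading to $2r-\alpha_c+\alpha_p-1>0$. When $p_0=p_1=0$, every vertex has at least $d\ge 2$ children so $r=\infty$, and the only obstruction to fast return is the Dirichlet tail of $\eta(x,x_*)^{-1}$ at a degree-$d$ vertex, which gives $(2d-1)\alpha_c+\alpha_p-1>0$. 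The offspring moment condition \eqref{moment condition} is used throughout to interchange sums and expectations in the branching functionals.

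The main obstacle will be the \emph{matching} sufficient and necessary directions. Sufficiency requires aggregating the bush contributions to $\tau_1$ using the Dirichlet moment identities together with \eqref{moment condition}; because the ratios $\eta(x,x_*)^{-1}$ are unbounded, the bounded-ellipticity framework of \cite{aidekon2008transient} does not apply directly and one must work harder on deep bushes and long linear chains. Necessity is subtler: one must exhibit a specific mechanism---a deep finite bush in the case $p_0>0$, a long linear chain in the case $p_0=0<p_1$, or an anomalously small $\eta(x,x_*)$ on a degree-$d$ spine vertex in the case $p_0=p_1=0$---whose contribution to $\mathbb E[\tau_1]$ diverges precisely when the stated inequality fails. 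The sharpness of the Dirichlet gamma identity is what upgrades each one-sided estimate into the equivalence stated in the theorem.
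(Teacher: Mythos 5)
Your proposal takes a genuinely different route from the paper. The paper does not attempt a direct excursion decomposition of the regeneration time; instead it first proves Theorem \ref{positive speed Thm 2} via the invariant-measure / path-reversal machinery of Section~3 (Lemmas \ref{lemma_fresh_rev}, \ref{rev_XY}, Theorem \ref{prop_asy_env}), obtaining the identity $\mathscr{C}=\mathbb E_\rho[\Theta_1\mathbf 1_{\{\tau_{\rho_*}=\infty\}}]\,\mathbb E_\rho[\beta(\rho)]$ together with the explicit series formula \eqref{eq_assump_finite} for $\mathscr{C}$ in terms of the escape probability $\beta(\rho)={\rm P}^\eta_\rho(\tau_{\rho_*}=\infty)$ and the Dirichlet variable $\eta(\rho,\rho_*)$. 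The whole content of Theorem \ref{positive speed Thm} then reduces to the moment analysis of that one series, carried out case-by-case in Propositions \ref{01>0}, \ref{01=0}, \ref{0>0} using Lemmas \ref{lemma_1overbeta} and \ref{lemma_dgreater1}. Your route is essentially the \cite{aidekon2008transient}-style approach: bound $\tau_1$ by a sum of excursion times into off-spine bushes. You correctly identify the three trapping mechanisms (finite bushes when $p_0>0$, long pipes when $p_0=0<p_1$, small $\eta(x,x_*)$ at degree-$d$ vertices when $p_0=p_1=0$) and the role of the exponent $r$ governed by $\mathbb E[A^{-k}]f'(q)$; these match the paper's intuition. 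However, two concrete gaps remain. First, you write $\beta_x:=E^\eta_x[H_{x_*}]$, the quenched \emph{expected hitting time} of the parent, but the paper's $\beta(x)$ is the quenched \emph{escape probability}; these are not the same object, and for a bush whose subtree is infinite your $\beta_x$ is infinite, so your recursion $\eta(x,x_*)\beta_x=1+\sum_i\eta(x,xi)\beta_{xi}$ only makes sense on finite bushes and cannot by itself treat the cases $p_0=0$. Second, and more seriously, the paper explicitly notes that the unboundedness of the ratios $A_i$ defeats the bounded-ellipticity arguments of \cite{aidekon2008transient}; you acknowledge that ``one must work harder'' but do not say how, and the matching-necessity step — showing that $\mathbb E[\tau_1]=\infty$ exactly when the stated inequality fails — is precisely where the paper's explicit formula for $\mathscr{C}$ does the heavy lifting (for instance the necessity argument in Proposition \ref{01>0} hinges on the Kesten–Grincevicius–Goldie lower bound \eqref{eq_1/beta_ge_x} for ${\rm P}(\beta^{-1}>x)$ applied directly to the two independent factors $\beta_0$, $\beta_1$ appearing in the series). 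Without that formula, upgrading one-sided upper bounds on $\mathbb E[\tau_1]$ into the stated equivalence is the hard part, and your sketch does not supply it.
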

	
 The second case is reminiscent of a criterion for positive speed given in \cite{sabot2013random} in the case of the lattice $\mathbb Z^k,k\geq3$ in which the role of the parameter $\kappa$ there would be played by the quantity $2(\alpha_p+d\alpha_c)-(\alpha_p+\alpha_c)=(2d-1)\alpha_c+\alpha_p$. The third case is equivalent to $ (\alpha_p-1)/\alpha_c>f^\prime(q)$. Taking $(\alpha_p,\alpha_c)=(\lambda \alpha, \alpha)$ and (informally) making $\alpha$ go to $+\infty$, it boils down to $\lambda> f'(q)$, which agrees with the result of  Lyons, Pemantle, and Peres \cite{lyons1996biased} in the case of $\lambda$-biased random walk.

	 \begin{figure}[ht]
 
    \centering
    \includegraphics[width=0.8\textwidth]{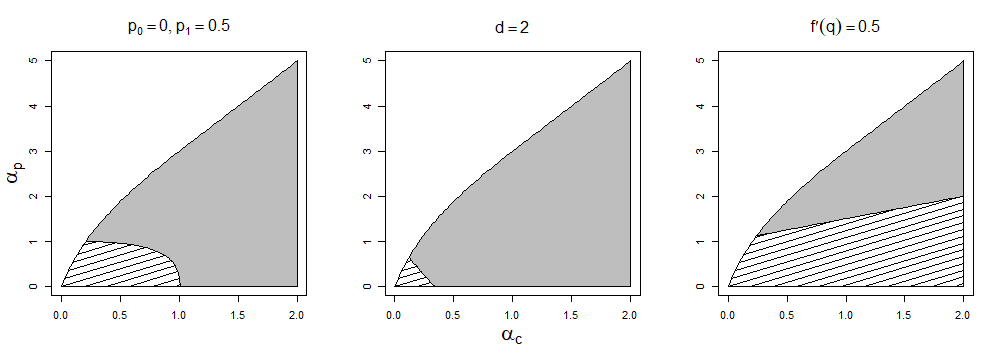}
    \caption{We have $m=2$ for these graphs. Gray areas are where $(\alpha_p,\alpha_c)$-ERRW has positive speed, while dashed areas are where the transient walk has zero speed. The remaining areas correspond to the region of recurrence. The first picture shows the effect of long pipe traps, which only depends on $p_1$. (Note that in this case $f^\prime (q)=p_1$.) The effect of traps is weak when $p_0=p_1=0$, as is shown in the second graph. Finally, we can see from the third one that traps from leaves have a stronger effect than traps from pipes.}
     \label{fig-threezone}
 \end{figure}

We actually obtain a necessary and sufficient condition (without condition \eqref{moment condition}) for the positivity of the speed in terms of the conductance of the tree.  Recall that we can view $(X_n)_n$ as a RWDE. Let $\tau_x:=\inf\{n\geq0:X_n=x\}$ denote the hitting time of $x$ with $\inf\emptyset=\infty$. For $x\in \mathbb T$, let
	\begin{equation}\label{eq_def_beta}
		\beta(x):={{\rm P}}_{x}^{\eta}(\tau_{x_*}=\infty),
	\end{equation}
 which is a functional of the tree $\mathbb T_*$ and the environment $(\eta_e)_e$. The random variable $\beta(\rho)$ is the so-called conductance of the tree $\mathbb T_*$. Since $(X_n)_{n\ge 0}$ is supposed to be transient, $\beta(\rho)>0$ a.s. conditioned on $\mathbb T_*$ being infinite.

 Define $\Phi:\mathbb N\rightarrow \mathbb R^+ $ as
\begin{equation}
\label{eq_phi_intro}
		\Phi(k):=\frac{\Gamma(\alpha_p)\Gamma(\alpha_c+k+1)}{\Gamma(\alpha_c+1)\Gamma(\alpha_p+k)}.
	\end{equation} 

 \noindent Our necessary and sufficient condition for the positivity of the speed reads as follows. 
\begin{theorem}\label{positive speed Thm 2}
    Suppose that $m\in (1,\infty)$ and that the $(\alpha_p,\alpha_c)$-ERRW  $(X_n)_{n\ge 0}$ is transient i.e.  \eqref{eq_transience} holds. The speed $v$ is positive if and only if 
    \begin{equation}\label{eq_assump_finite}
         \mathscr{C}:=\sum_{k\geq0} \Phi(k) \mathbb{E}\left[\frac{(1-\beta(\rho))^{k+1}}{\eta(\rho,\rho^*)}\right] \mathbb{E}\left[\beta(\rho)(1-\beta(\rho))^k \right]<\infty.
    \end{equation}
\end{theorem}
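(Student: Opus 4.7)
The plan is to exploit the environment seen from the particle, whose stationary distribution has been computed in earlier sections of the paper. Write $\bar\omega_n$ for the Dirichlet environment on $\mathbb{T}_*$ re-rooted at the current position $X_n$, so that $(\bar\omega_n)_{n\ge 0}$ is a Markov chain under $\mathbb{P}$. This chain admits a $\sigma$-finite invariant measure $Q$ absolutely continuous with respect to the annealed law $\mathbb{P}$. Applying Birkhoff's ergodic theorem to the local annealed drift $\Delta(\omega):=\mathbb{E}^\omega[|X_1|-|X_0|]$, and using the regeneration structure guaranteed by transience \eqref{eq_transience}, I would obtain
\[
    v \;=\; \frac{\int \Delta \,\d Q}{Q(\Omega)}.
\]
Under \eqref{eq_transience} the numerator is a strictly positive, finite constant, so $v>0$ is equivalent to $Q(\Omega)<\infty$; the task then reduces to identifying $Q(\Omega)$ with a positive multiple of $\mathscr{C}$.

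To compute $Q(\Omega)$, the idea is to represent the total mass as an expected local time at $\rho$ for the walk, and then decompose this local time according to the number $k$ of completed round-trip excursions from $\rho$ to $\rho_*$ before the walk finally escapes to infinity. Each of the three factors in the $k$-th summand of $\mathscr{C}$ then has a transparent probabilistic origin: $\mathbb{E}[\beta(\rho)(1-\beta(\rho))^k]$ is the annealed probability of exactly $k$ such round-trips; $\mathbb{E}[(1-\beta(\rho))^{k+1}/\eta(\rho,\rho^*)]$ encodes the expected time spent at $\rho$ across those excursions, the factor $1/\eta(\rho,\rho^*)$ arising from the geometric number of visits to $\rho$ between two successive departures to $\rho_*$ (each happening with conditional probability $\eta(\rho,\rho_*)$); and the combinatorial weight $\Phi(k)$ emerges from marginal Beta moments of the Dirichlet environment at $\rho_*$ once the coordinates other than $(\rho_*,\rho)$ have been integrated out. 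The factorization into a product of independent expectations reflects the independence, under the annealed law on the Galton--Watson tree, of the two subtrees bordering the edge $(\rho_*,\rho)$.

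The main obstacles are threefold. First, one must rigorously identify $Q(\Omega)$ with the expected local time at $\rho$, which requires a cycle/flow formula for the re-rooting Markov chain adapted to trees with an artificial parent. Second, the Beta-moment computation producing $\Phi(k)$ is delicate because the Dirichlet environment at $\rho_*$ carries $\nu(\rho_*)$-many coordinates, and the apparent $\nu$-dependence must cancel upon integrating out the coordinates other than $(\rho_*,\rho)$; this is typically achieved through Dirichlet conjugacy or a Sabot--Tournier-style time-reversal identity specific to Dirichlet environments. Third, and most subtle, to obtain the converse direction $\mathscr{C}=\infty\Rightarrow v=0$ rather than merely $\mathscr{C}<\infty\Rightarrow v>0$, one has to show that $Q(\Omega)=\infty$ really forces the speed to vanish; this will most likely require constructing an explicit test functional---for instance an excursion time to a carefully chosen deep generation---whose annealed expectation diverges exactly when $\mathscr{C}=\infty$, and then invoking a Borel--Cantelli-type argument to rule out positive speed.
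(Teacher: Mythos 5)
Your high-level plan (invariant measure for the environment seen from the particle, expressing the speed as a ratio $\int\Delta\,\mathrm{d}Q\big/Q(\Omega)$, and identifying $Q(\Omega)$ with $\mathscr{C}$) is in the right circle of ideas, and your reading of the three factors in the $k$-th summand of $\mathscr{C}$ captures the correct probabilistic content (the paper obtains them via Lemma~\ref{Lm_exp_bais} together with the path-reversal identity of Lemma~\ref{rev_XY} and the independence of the two subtrees flanking the root edge). However, the proposal has a genuine gap precisely at the point you flag as ``most subtle,'' and a secondary one you pass over.

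The main gap is the direction $\mathscr{C}=\infty\Rightarrow v=0$. The Birkhoff-ratio formula $v=\int\Delta\,\mathrm{d}Q/Q(\Omega)$ says nothing when $Q(\Omega)=\infty$, and your proposed remedy (``construct a test functional whose expectation diverges exactly when $\mathscr{C}=\infty$ and invoke Borel--Cantelli'') is a placeholder with no identifiable mechanism. The paper takes a completely different route: it never argues from $Q(\Omega)=\infty$ to $v=0$. Instead it proves the contrapositive $v>0\Rightarrow\mathscr{C}<\infty$ directly, and the key ingredient you are missing is Fact~\ref{lemma_regspeed}, the regeneration-time characterization $v>0\iff\mathbb E_\rho[\Theta_1\mid\tau_{\rho_*}=\infty]<\infty$. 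Given $v>0$, one has an integrable regeneration increment, so the renewal theorem applies to $b_i=\mathbb P_\rho(i\in\{\Theta_k\}\mid\tau_{\rho_*}=\infty)$, and combining the \emph{unconditional} truncated reversal identity
\begin{equation*}
\mathbb E_\rho\bigl[\mathbf 1_{\{\tau_{\rho_*}>n\}}\bigr]\,\mathbb E_\rho[\beta(\rho)]
=\mathbb E_{(\rho^-,\rho^+)}\Bigl[\sum_{l=0}^{n}\Phi(N^Y_{(\rho^-,\rho^+)}(l))\,\mathbf 1_{\{\tau^X_{\rho^+}=\infty,\,n-l\in\{\Theta_k\}\}}\,\mathbf 1_{\{\tau^Y_{X_{n-l}}>l,\,Y_l=\rho^+\}}\Bigr]
\end{equation*}
with Fatou's lemma and monotone convergence in $K$ yields $\mathbb E_\rho[\beta]\ge\mathscr C/\mathbb E_\rho[\Theta_1\mathbf 1_{\{\tau_{\rho_*}=\infty\}}]$, hence $\mathscr C<\infty$. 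Note that this identity holds for every $n$ with no integrability assumption, which is what makes the argument work in the divergent regime; your outline does not produce any such unconditional identity.

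The secondary gap is that in the direction $\mathscr C<\infty\Rightarrow v>0$ you assert that $\int\Delta\,\mathrm{d}Q$ is strictly positive under \eqref{eq_transience} but give no argument. The paper again sidesteps this via Fact~\ref{lemma_regspeed}: under $\mathscr C<\infty$, Theorem~\ref{prop_asy_env} gives the equality $\mathscr C=\mathbb E_\rho[\Theta_1\mathbf 1_{\{\tau_{\rho_*}=\infty\}}]\,\mathbb E_\rho[\beta(\rho)]$, so $\mathbb E_\rho[\Theta_1\mid\tau_{\rho_*}=\infty]<\infty$ and positivity of the speed follows from the regeneration criterion, with no need to inspect the sign of the drift integral. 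Replacing your Birkhoff/drift-positivity step and your Borel--Cantelli sketch by the regeneration characterization plus the renewal theorem is not a cosmetic change; it is what makes both implications actually go through.
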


Our next result is a formula for the speed $v$ of $(X_n)_{n\ge 0}$.  In the standard case of $\lambda$-biased random walk on a Galton-Watson tree conditioned on non-extinction, Lyons, Pemantle, and Peres find an explicit speed formula when $\lambda=1$ \cite{lyons1995ergodic} with the help of an invariant measure. Note that the walk is a simple random walk in this case. For general bias $\lambda$, \cite{aidekon2014speed} gives a formula for the speed in terms of the conductance of a tree. We give a general formula for the speed of the $(\alpha_p,\alpha_c)$-ERRW $(X_n)_n$ which, as in \cite{aidekon2014speed}, involves the conductance $\beta(\rho)$. 
	
For sake of concision, we let $\beta$ be a generic random variable with distribution $\beta(\rho)$. Recall the definition of $(A_i)_{1\le i\le \nu}$ in \eqref{eq_Aisratioomega} and let $\beta_0,\beta_1,\cdots$ be  i.i.d. random variables distributed as $\beta$, and independent of $(A_i)_{1\le i\le \nu}$. We introduce the hypergeometric function (see \cite{bateman1953higher} for example) $_2F_1(1,\alpha_c+1;\alpha_p;x):[0,1)\rightarrow \mathbb R^+$:
	\begin{equation}\label{eq_def_F}
		_2F_1(1,\alpha_c+1;\alpha_p;x):=\sum_{k\geq0}\Phi(k)x^k,\,x\in[0,1).
	\end{equation} 
 We simply write $F(x)$ when there is no confusion.
 
 \begin{theorem}\label{speed formula introduction}  
Suppose that $m\in (1,\infty)$, that the $(\alpha_p,\alpha_c)$-ERRW  $(X_n)_{n\ge 0}$ is transient i.e. \eqref{eq_transience} holds, and that \eqref{eq_assump_finite} holds.  Then the speed $v$ is positive and can be expressed as
		\begin{equation}\label{eq_doubletree}
			\begin{split}
				&\mathbb{E}\left[\frac{\beta_0(\sum_{i=1}^{\nu}A_i+1)}{1+\sum_{i=1}^{\nu}A_i\beta_i}\times F\left(\frac{1-\beta_0}{1+\sum_{i=1}^{\nu}A_i\beta_i}\right)\right]^{-1}\mathbb{E}\left[\frac{\beta_0(\sum_{i=1}^{\nu}A_i-1)}{1+\sum_{i=1}^{\nu}A_i\beta_i}\times F\left(\frac{1-\beta_0}{1+\sum_{i=1}^{\nu}A_i\beta_i}\right)\right].
			\end{split}	
		\end{equation}
\end{theorem}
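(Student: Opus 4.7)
The plan is to derive the speed formula \eqref{eq_doubletree} from the pointwise ergodic theorem applied to the environment seen from the particle (ESFTP). I would split the argument into four conceptual steps.

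\emph{Setting up the ESFTP.} I would first define the Markov chain $Z_n$ on the space of rooted trees equipped with a Dirichlet environment and a distinguished ``upward'' neighbor, obtained by re-rooting $(\mathbb T_*,\eta)$ at $X_n$ while remembering which neighbor played the role of the former parent. The transition kernel on this state space is prescribed by the ratios $\eta(\cdot,\cdot)$ themselves. Under $\mathbb P$, $(Z_n)_{n\ge 0}$ is Markov, and the speed $v=\lim |X_n|/n$ can be written as a Birkhoff average of the level-increment functional $h(Z_n)=|X_{n+1}|-|X_n|$, whose quenched conditional expectation at the origin equals $(\sum_{i=1}^{\nu}A_i-1)/(\sum_{i=1}^{\nu}A_i+1)$ because $1/\eta(\rho,\rho_*)=1+\sum_i A_i$.

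\emph{Candidate invariant measure.} Guided by the form of \eqref{eq_doubletree}, the natural candidate $\pi$ is absolutely continuous with respect to $\mathbb P$ enlarged by an independent conductance variable $\beta_0$ distributed as $\beta$, with unnormalized density
\begin{equation*}
    g \;=\; \frac{\beta_0\bigl(\sum_{i=1}^{\nu}A_i+1\bigr)}{1+\sum_{i=1}^{\nu}A_i\beta_i}\,F\!\left(\frac{1-\beta_0}{1+\sum_{i=1}^{\nu}A_i\beta_i}\right).
\end{equation*}
The variable $\beta_0$ carries the probabilistic meaning of the non-extinction probability of the walk started at the ``upward'' neighbor of the root, conditioned never to cross back; the series expansion of $F$ couples this upward information with the downward conductance $1-\beta(\rho)=1/(1+\sum_i A_i\beta_i)$ of the subtree. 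Using $F(x)=\sum_{k\ge 0}\Phi(k)x^k$, the identity $1/\eta(\rho,\rho_*)=1+\sum_i A_i$, and the independence of $\beta_0$ from $(A_i,\beta_i)_{1\le i\le\nu}$, I would verify that $\mathbb E[g]=\mathscr C$, so that $g/\mathscr C$ is a bona fide probability density exactly under the finiteness assumption \eqref{eq_assump_finite}.

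\emph{Invariance.} The technical heart of the argument, and the main obstacle, is to show that $\pi$ is invariant for $(Z_n)_{n\ge 0}$. I would proceed by conditioning on the first step of the walker: when it moves to a child $\rho i$, one re-indexes the environment so that $\rho i$ becomes the new root and $\rho$ plays the role of the new upward neighbor, absorbing its freshly computed conductance into a new $\beta_0'$; when it moves to $\rho_*$, the former $\beta_0$ acquires the role of the downward conductance and a new independent upward conductance is drawn. Matching the weights requires two ingredients: the Dirichlet conjugacy, which dictates how the vector $(\eta(\rho,\rho i))_i$ transforms after size-biasing by $\eta(\rho,\rho_*)$ or $\eta(\rho,\rho i)$, and the hypergeometric recursion
\begin{equation*}
    \Phi(k+1)\;=\;\frac{\alpha_c+k+1}{\alpha_p+k}\,\Phi(k),
\end{equation*}
which precisely compensates the shift in the summation index of the $F$-series produced by the re-rooting. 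Carrying out the bookkeeping for each term of the series expansion is the step I expect to be most delicate.

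\emph{Extracting the formula.} Once $\pi$ is shown to be invariant (ergodicity transfers from the ergodicity of the Galton--Watson structure together with the fact that $v$ is already known to be deterministic), the pointwise ergodic theorem yields $v=\mathbb E_\pi[h]$. Multiplying $g$ by the quenched drift $(\sum_i A_i-1)/(\sum_i A_i+1)$ cancels the factor $\sum_i A_i+1$ and produces exactly the numerator of \eqref{eq_doubletree}; dividing by the normalization $\mathbb E[g]=\mathscr C$, which equals the displayed denominator, gives the announced identity. The strict positivity $v>0$ follows automatically whenever $\mathscr C<\infty$, in agreement with Theorem \ref{positive speed Thm 2}.
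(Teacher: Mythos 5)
Your plan and the paper's proof are genuinely different routes to the same stationary object. The paper does not guess an invariant measure and verify invariance: instead it proves, via the ERRW path--reversal lemmas (Lemmas \ref{lemma_fresh_rev} and \ref{rev_XY}) and a renewal-theorem argument, the stronger statement that the triple (double tree, environment, reversed history) converges in distribution to $\mathscr C^{-1}\mu_{ER}$ (Theorem \ref{prop_asy_env}); the speed formula then falls out by Cesàro averaging with the drift functional $\mathbf F=\sum_i\eta(\rho,\rho i)-\eta(\rho,\rho_*)$ and Lemma \ref{Lm_exp_bais}. Convergence of the one-step law already gives you the Cesàro limit, so no ergodic theorem is invoked. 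Your computations in the first, second, and fourth bullets do line up with the paper's: the identification of the conditional drift $(\sum_i A_i-1)/(\sum_i A_i+1)$, the identity $\mathbb E[g]=\mathscr C$ (this is exactly \eqref{eq_sf_3}), and the cancellation of $\sum_i A_i+1$ in the final algebra are all correct and match \eqref{eq_sf_1}--\eqref{eq_sf_3}.

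However, your third step is a genuine gap, not merely a delicate calculation you are postponing. First, verifying invariance of $\pi$ for the ESFTP is precisely the content the paper replaces by the path--reversal argument, and the authors themselves explicitly defer the statement ``$\mu_{ER}$ is invariant'' to a subsequent paper (remark after Corollary \ref{Cor_inv_meas}); the Dirichlet-conjugacy-plus-$\Phi$-recursion bookkeeping you gesture at is not a routine verification on a Galton--Watson tree. Second, the state space is under-specified: you write $g$ as a density involving an \emph{independent} $\beta_0$, but the upward conductance is a deterministic functional of the backward half of the environment, and under the ESFTP dynamics it must be updated coherently with the rest of the state (the backward tree grows or shrinks with the walk). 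Treating $\beta_0$ as a static, freely adjoined coordinate makes the proposed stationarity check ill-posed unless you first show that, under $\pi$, the backward conductance is distributionally independent of the forward data --- which is itself part of what has to be proved, not assumed. Third, even granting invariance, Birkhoff requires ergodicity; your sentence ``ergodicity transfers from the ergodicity of the Galton--Watson structure together with the fact that $v$ is already known to be deterministic'' is circular in spirit, since determinism of $v$ in the paper is proved via the regeneration structure, not via an ergodic decomposition of $\pi$. The paper's convergence-in-law route neatly avoids all three of these issues.
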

\begin{corollary}\label{cor_formula}
When $\alpha_p=\alpha_c=\alpha>0$, if \eqref{eq_transience} and \eqref{eq_assump_finite} hold, the speed formula reduces to 
\begin{equation}\label{eq_speed}
			\frac{\mathbb E\left[\beta_0{(\sum_{i=1}^\nu A_i-1)((1-\beta_0)\alpha^{-1}+\beta_0+\sum_{i=1}^\nu A_i\beta_i)}{(\beta_0+\sum_{i=1}^\nu A_i\beta_i)^{-2}}\right]}{\mathbb E\left[\beta_0(\sum_{i=1}^\nu A_i+1){((1-\beta_0)\alpha^{-1}+\beta_0+\sum_{i=1}^\nu A_i\beta_i)}{(\beta_0+\sum_{i=1}^\nu A_i\beta_i)^{-2}}\right]}.
		\end{equation}
 When $\alpha_p=1$, $\alpha_c=1/2$ and \eqref{eq_transience} holds, it reduces to
		\begin{equation}
			\label{eq_speed_errw}
			\frac{\mathbb E\left[\beta_0(\sum_{i=1}^\nu A_i-1)\sqrt{1+\sum_{i=1}^{\nu}A_i\beta_i}(\beta_0+\sum_{i=1}^{\nu}A_i\beta_i)^{-3/2}\right]}{\mathbb E\left[\beta_0(\sum_{i=1}^\nu A_i+1)\sqrt{1+\sum_{i=1}^{\nu}A_i\beta_i}(\beta_0+\sum_{i=1}^{\nu}A_i\beta_i)^{-3/2}\right]}.
		\end{equation}
\end{corollary}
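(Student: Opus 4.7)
The plan is a direct substitution: specialize the parameters $(\alpha_p,\alpha_c)$ in the definitions \eqref{eq_phi_intro} and \eqref{eq_def_F}, evaluate $F$ in closed form, and then simplify the integrands in the ratio \eqref{eq_doubletree}. Throughout the computation I will abbreviate $T := 1 + \sum_{i=1}^\nu A_i\beta_i$ and $B := \beta_0 + \sum_{i=1}^\nu A_i\beta_i$, so that the argument of $F$ inside \eqref{eq_doubletree} is $x = (1-\beta_0)/T$ and $1-x = B/T$; the factor $\beta_0(\sum_{i=1}^\nu A_i \pm 1)/T$ multiplying $F(x)$ is then handled identically in the numerator and denominator of \eqref{eq_doubletree}.

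For the case $\alpha_p = \alpha_c = \alpha$, two applications of $\Gamma(z+1) = z\Gamma(z)$ in \eqref{eq_phi_intro} collapse $\Phi(k)$ to $(\alpha+k)/\alpha$. Splitting the series \eqref{eq_def_F} according to the decomposition $\Phi(k)=1+k/\alpha$ and summing the two pieces using $\sum_{k\ge 0}x^k = (1-x)^{-1}$ and $\sum_{k\ge 0}kx^k = x(1-x)^{-2}$ yields
\[
F(x) \;=\; \frac{1}{1-x} + \frac{x}{\alpha(1-x)^2} \;=\; \frac{\alpha - (\alpha-1)x}{\alpha(1-x)^2}.
\]
Substituting $x=(1-\beta_0)/T$ and expanding the numerator produces the key identity $\alpha T - (\alpha-1)(1-\beta_0) = \alpha B + (1-\beta_0)$, which gives $F(x) = T\bigl[\alpha B + (1-\beta_0)\bigr]/(\alpha B^2)$. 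Inserting this into \eqref{eq_doubletree} cancels the factor $T$ in the denominator coming from $1/T$, and factoring $\alpha$ into the bracket gives exactly $[(1-\beta_0)\alpha^{-1} + \beta_0 + \sum_i A_i\beta_i]/B^2$, which is \eqref{eq_speed}.

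For the case $\alpha_p = 1$, $\alpha_c = 1/2$, writing \eqref{eq_def_F} in terms of Pochhammer symbols gives ${}_2F_1(1,3/2;1;x) = \sum_{k\ge 0}(3/2)_k x^k/k!$, since the factors $(1)_k$ in numerator and denominator cancel. This is Newton's generalized binomial series for $(1-x)^{-3/2}$. Using $1-x = B/T$, we obtain $F(x) = T^{3/2}/B^{3/2}$, and plugging into \eqref{eq_doubletree} yields \eqref{eq_speed_errw} after elementary simplification. Neither computation poses any genuine obstacle; both are routine manipulations of gamma functions and power series. The one point requiring a bit of care is that the statement in the ERRW case omits the integrability assumption \eqref{eq_assump_finite}. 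I would address this either by flagging it as an implicit hypothesis inherited from Theorem \ref{speed formula introduction}, or by directly checking that in the specific parameter regime $(1,1/2)$ the explicit closed form $(1-x)^{-3/2}$, combined with standard moment estimates on $\beta$ and on $1/\eta(\rho,\rho_*)$ coming from the Dirichlet structure, renders $\mathscr{C}<\infty$ a consequence of \eqref{eq_transience} alone.
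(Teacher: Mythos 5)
Your computation is correct and takes essentially the same route as the paper: the paper writes $F(x)=G(x)+xG'(x)/\alpha_c$ with $G(x)={}_2F_1(1,\alpha_c;\alpha_p;x)$ and evaluates $G$ in closed form, whereas you evaluate $\Phi(k)$ directly (as $1+k/\alpha$, respectively as $(3/2)_k/k!$) and sum the series; both lead to $F(x)=\bigl(\alpha-(\alpha-1)x\bigr)/\bigl(\alpha(1-x)^2\bigr)$ and $F(x)=(1-x)^{-3/2}$ and then to the stated ratios after substituting $x=(1-\beta_0)/T$ and $1-x=B/T$. Your remark that the $(\alpha_p,\alpha_c)=(1,1/2)$ clause omits the hypothesis $\eqref{eq_assump_finite}$ is a fair reading of the statement, though note that the suggested fix is not immediate (for $\alpha_p=1$ one has $\mathbb E[\eta(\rho,\rho_*)^{-1}]=\infty$, so finiteness of $\mathscr C$ cannot come from a naive factorisation); the paper's own proof also leaves this point unaddressed, so it is best treated as an implicit inheritance from Theorem~\ref{speed formula introduction}.
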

	
The main tool of our paper is the invariant measure of the environment seen from a particle, which is standard in the theory of random walks in random environment. Using arguments from ergodic theory, this invariantinvariantinvariant measure describes the asymptotic distribution of the environment seen from the particle. The stationary measures have been investigated in several models, like random walk in random environment (RWRE) on $\mathbb Z$,  see \cite{bolthausen2002ten} for example; simple random walks on a Galton-Watson tree in \cite{lyons1995ergodic}; $\lambda$-biased random walks on a Galton-Watson tree in \cite{aidekon2014speed}, \cite{lin2019harmonic}; null recurrent biased random walks or RWRE on a Galton-Watson tree in \cite{peres2008central}, \cite{faraud2011central}; continuous-time biased random walks on a Galton-Watson tree in \cite{ben2013einstein}; RWDE on $\mathbb{Z}^k,k\geq3$ in \cite{sabot2013random}, etc.  

In this article, we give an expression of the invariant distribution of the environment seen from $(X)_{n\ge 0}$. To do so, we need the concept of double trees marked with a distinguished path. In words, double trees consist of the gluing of two trees, the tree $\mathbb T^+_*$ standing for the subtree rooted at the particle, and the tree $\mathbb T^-_*$ standing for the part of the tree located below the particle. The distinguished path $\gamma$ represents the history of the walk up to the current time.

Let us give the stationary measure for the environment seen from the particle. We first sample two independent Galton-Watson trees $\mathbb T^-_*,\mathbb T^+_*$, with the $(\alpha_p,\alpha_c)$-initial weights. Then we create the double tree $\mathbb T^-\rightleftharpoons\mathbb T^+$ by connecting the roots of  $\mathbb T^-_*,\mathbb T^+_*$, denoted by $\rho^-,\rho^+$ respectively (see Figure \ref{fig:double_tree}). In other words, we let $\rho^-_*:=\rho^+,\rho^+_*:=\rho^-$. We require that $\alpha_{(\rho^-,\rho^+)}=\alpha_{(\rho^+,\rho^-)}=\alpha_p$ and the other weights are the same as those on the original trees. We call $e_{\rho}:=(\rho^-,\rho^+)$ the root edge. 
 \begin{figure}[ht]
    \centering
    \includegraphics[width=0.75\linewidth]{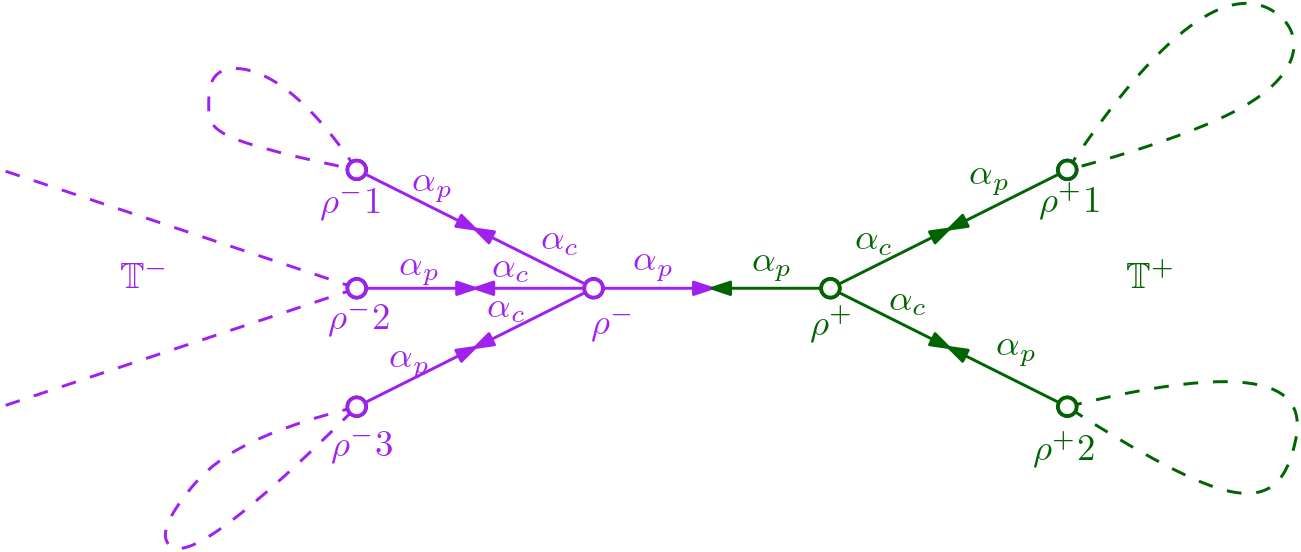}
    \caption{A double tree with $(\alpha_p,\alpha_c)$-initial weights. We use purple and green colors to distinguish $\mathbb T^-$ and $\mathbb T^+$. One can see that the double tree weights are locally the same as those on a Galton-Watson tree.}
    \label{fig:double_tree}
 \end{figure}

The double tree is a weighted directed graph, so we can define a Dirichlet environment on it (see Figure \ref{fig:double_tree}). Let $Y=(Y_n)_{n\ge 0}$ and $X=(X_n)_{n\ge 0}$ be two random walks on the double tree starting from $\rho^+$ and $\rho^-$ respectively which are conditionally independent given the Dirichlet environment (see Figure \ref{fig:XYonDoubleTree}). We stress that, after averaging over the Dirichlet environment, $Y$ and $X$ are not independent anymore. Let $Rev((X_n)_{n\ge 0})$ be the time-reverse of the path of $(X_n)_{n\ge 0}$, which is therefore indexed by $\mathbb{Z}_-$.

For $k\geq 0$, we concatenate the reversed path $Rev((X_n)_{n\ge 0})$ with the finite path $(Y_l)_{0\le n\le k}$ and denote it by $Rev((X_n)_{n\ge 0})\ast(Y_n)_{0\leq n\leq k}$.  Since $X$ is transient, this path  `comes' from the boundary of the tree $\mathbb{T}^-$ . Let $\tau^X_{x}:=\inf\{n> 0, X_n=x\}$ be the hitting time of a vertex $x$ on the double tree for $X$. We call $\mu_{ER}^{(k)}$ the distribution of the double tree with marked path 
\begin{align*}
   \left (\mathbb T^-\rightleftharpoons\mathbb T^+, Rev((X_n)_{n\ge 0})\ast(Y_n)_{0\leq n\leq k}\right).
\end{align*}
We define the distribution $\mu_{ER}$ on the space of double trees with marked path by
\begin{align}\label{eq_def_muER}
    \mu_{ER}(\cdot)=\sum_{k=1}^{\infty} \Phi(N^{Y}_{e_{\rho}}(k))\textbf{1}_{\{\tau^X_{\rho^+}=\infty,Y_k=\rho^+\}} \mu_{ER}^{(k)}(\cdot)
\end{align}
where the definition of the function $\Phi: \mathbb N \rightarrow \mathbb R$ is given in (\ref{eq_phi_intro}). It defines an invariant measure for the environment seen from the particle for the $(\alpha_p,\alpha_c)$-ERRW (See Corollary \ref{Cor_inv_meas}).

\begin{figure}[ht]
 
    \centering
    \includegraphics[width=0.8\textwidth]{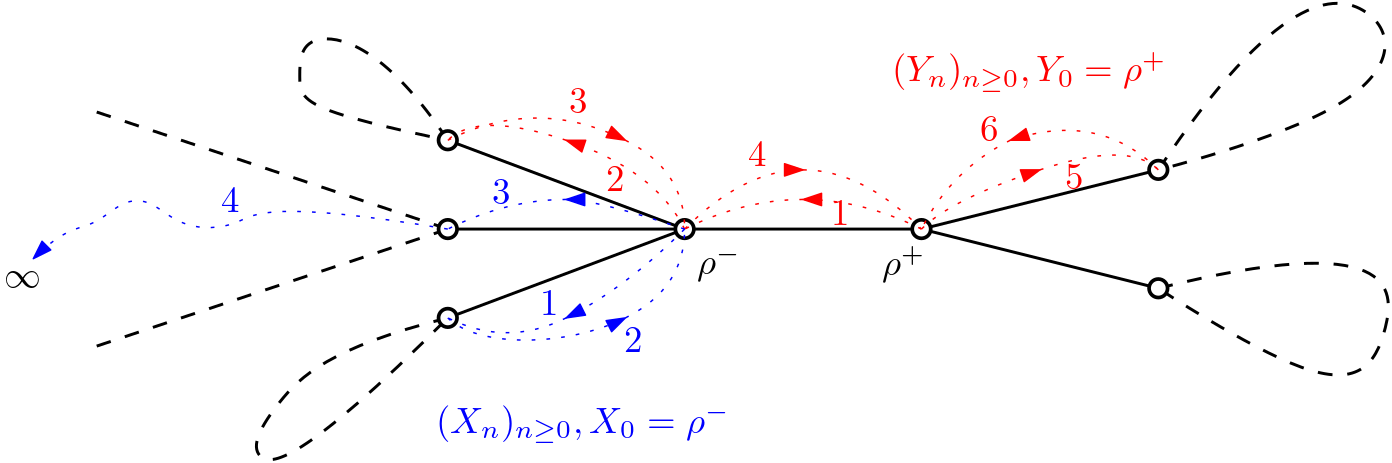}
    \caption{Two random walks $X$ and $Y$ on the double tree with weights in Figure \ref{fig:double_tree}. After sampling the double tree and Dirichlet environment, we sample $(X_n)_{n\geq0},(Y_n)_{n\geq0}$ independently. We use blue color to indicate that we need to reverse the path of $(X_n)_{n\geq0}$.}
    \label{fig:XYonDoubleTree}
 \end{figure}
 
The article is arranged as follows. In Section \ref{section_preliminaries}, we talk about regeneration structure and give some properties of RWDE, especially RWDE on a Galton-Watson tree. In Section \ref{section_asy}, we extend the path reversal argument of \cite{aidekon2014speed} to our case and characterize the environment seen from a particle when it is far away from the root. We also prove Theorem \ref{positive speed Thm 2} in this section and give the invariant measure. In Section \ref{section_speed}, we prove Theorem \ref{speed formula introduction} and give the method of computing (\ref{eq_speed}) and (\ref{eq_speed_errw}). We also deduce the criteria for positive speed and prove Theorem \ref{positive speed Thm}. 
\\
\textbf{Acknowledgements:}
	We would like to thank Elie Aïdékon for offering the question and many important discussions.
	\section{Preliminaries}\label{section_preliminaries}
	\subsection{Facts about Regeneration Time} \label{Sec_Fact_Reg}
	For a random walk $(X_n)_{n\ge 0}$ on a Galton-Watson tree starting from $\rho$, we call $\theta$ a fresh epoch if $X_\theta\not =X_n$ for all $n<\theta$ and a regeneration epoch if additionally, $X_{\theta-1}\not= X_n$ for all $n\ge\theta$. More specifically, let $\theta_0:=0,\Theta_0:=0$. For $k\geq1$, let
	\begin{equation*}
		\theta_k:=\inf\{n>\theta_{k-1}:X_n\not=X_j,\forall0\leq j<n\}
	\end{equation*}
	be the $k$-th fresh epoch, and
	\begin{equation}\label{def_reg_epoch}
		\Theta_k:=\inf\{n>\Theta_{k-1}:n\in\{\theta_i,i\geq1\},X_j\not=(X_n)_*,\forall j\geq n\}
	\end{equation}
	be the $k$-th regeneration epoch. In \cite[Section 3]{lyons1996biased}, properties of these random times for biased random walk are given and these facts can be enhanced to the proposition below.
	\begin{proposition}\label{prop_infiregiid}
		For any transient random walk in random environment on a Galton-Watson tree given non-extinction, there are infinitely many regeneration epochs a.s. Moreover $\{\Theta_{k+1}-\Theta_k\}_{k\geq1}$ are i.i.d. as are the increments $\{|X_{\Theta_{k+1}}|-|X_{\Theta_k}|\}_{k\geq1}$.
\end{proposition}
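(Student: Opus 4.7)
The plan is to extend the framework of Lyons, Pemantle and Peres for $\lambda$-biased random walks on Galton-Watson trees to a general transient RWRE. Two ingredients are needed: (i) infinitely many regeneration epochs a.s.\ on $\mathcal{S}$; (ii) the strong Markov property at the $\Theta_k$'s, combined with the Galton-Watson branching property and the vertex-wise independence of the environment, which together give the i.i.d.\ structure of the increments.

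For (i), I would use transience to produce infinitely many fresh epochs. Since $|X_n|\to\infty$ on $\mathcal{S}$, the level first-passage times $L_n := \inf\{t \ge 0 : |X_t| = n\}$ are a.s.\ finite and each is a fresh epoch. The event $\{L_n \text{ is a regeneration}\}$ coincides with $\{X_s \neq X_{L_n - 1}\text{ for all }s\ge L_n\}$, whose conditional probability, given the environment and the path up to $L_n$, equals $\beta(X_{L_n})$ by the strong Markov property of $X$ in the fixed environment. By the Galton-Watson branching property and the vertex-wise independence of the Dirichlet environment, the subtree descending from $X_{L_n}$, together with its environment, is independent of the visited part of $\mathbb{T}_*$, so that $\beta(X_{L_n})$ has the unconditional law of $\beta(\rho)$. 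Extracting a subsequence $L_{n_k}$ along which the descending subtrees are pairwise disjoint (e.g.\ along level records of $|X_n|$), the indicators that $L_{n_k}$ is a regeneration become conditionally independent Bernoullis with success parameters i.i.d.\ distributed as $\beta(\rho)$. Since $\beta(\rho)>0$ a.s.\ on $\mathcal{S}$ (as already noted in the paper), these parameters have positive expectation, and a Borel-Cantelli argument yields infinitely many regeneration epochs; the strong Markov property at $\Theta_k$ then produces $\Theta_{k+1}$ by the same argument applied to the fresh subtree.

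For (ii), at each $\Theta_k$ the walk sits at the fresh vertex $X_{\Theta_k}$; by branching plus environment-independence, the subtree rooted at $X_{\Theta_k}$ together with its Dirichlet environment is independent of everything revealed up to $\Theta_k$; and by definition of a regeneration the future trajectory never revisits $(X_{\Theta_k})_* = X_{\Theta_k - 1}$, so it evolves entirely within that subtree. Relabelling $X_{\Theta_k}$ as a root $\rho$, the conditional law of $(X_{\Theta_k + n})_{n\ge 0}$ given the history up to $\Theta_k$ is that of the original walk under $\mathbb{P}(\cdot\mid \tau_{\rho_*} = \infty)$, which does not depend on $k$. Hence both $\{\Theta_{k+1} - \Theta_k\}_{k\ge 1}$ and $\{|X_{\Theta_{k+1}}| - |X_{\Theta_k}|\}_{k\ge 1}$ are i.i.d.\ sequences (note that $k=0$ is excluded because the walk starts at $\rho$, not at a regenerating vertex).

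The main technical obstacle, and the only point where one genuinely departs from the biased random walk proof, is the rigorous verification of the conditional independence of the unvisited subtree and its environment given the path up to $\Theta_k$. For the $\lambda$-biased walk the environment is deterministic once the tree is fixed, so only the branching property is needed. For a general RWRE one needs, in addition, independence of the environment on disjoint subtrees; in our Dirichlet setup this is precisely the vertex-wise independence of the weights $(\eta_e)_{\underline e = x}$, which ensures that the environment on the subtree rooted at $X_{\Theta_k}$ remains an untouched Dirichlet environment and thus lets the strong Markov property be applied to the joint process $(X_n,\eta)$ in the usual way.
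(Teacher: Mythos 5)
The paper does not supply its own proof of this proposition: it simply notes that the argument is analogous to Lyons--Pemantle--Peres \cite{lyons1996biased}, so I am comparing your sketch against the standard regeneration-time argument you are (correctly) trying to reproduce.

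Your overall strategy for (i) and (ii) is the right one, but the execution of (i) contains a genuine gap. You claim one can ``extract a subsequence $L_{n_k}$ along which the descending subtrees are pairwise disjoint (e.g.\ along level records of $|X_n|$)'' and that the regeneration indicators along this subsequence then become conditionally independent Bernoullis. Both claims fail. The descendant subtrees rooted at the level-record vertices $X_{L_n}$ are \emph{nested}, not disjoint: as soon as some $L_N$ is a regeneration time, every subsequent $X_{L_m}$, $m>N$, lies in the subtree rooted at $X_{L_N}$, and one cannot produce an infinite sequence of fresh vertices whose descendant subtrees are pairwise disjoint (indeed if one could, no regeneration would ever occur). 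Consequently the regeneration indicators $\mathbf 1_{\{L_n\text{ is a regeneration}\}}$ are not conditionally independent; each such event concerns the entire future of the walk and is not even $\mathcal F_{L_{n+1}}$-measurable, so a naive Borel--Cantelli argument does not apply. The argument that does go through (and is what Lyons--Pemantle--Peres actually do) replaces the $L_n$'s by a sequence of \emph{stopping-time} trials: set $\sigma_1=L_1$; if the walk returns to $(X_{\sigma_k})_*$ at some finite time $D_k$, let $\sigma_{k+1}$ be the first fresh epoch after $D_k$. The subtree rooted at $X_{\sigma_k}$ is unexplored at time $\sigma_k$, so by the branching property and the i.i.d.\ environment the annealed conditional probability of $\{D_k=\infty\}$ given $\mathcal F_{\sigma_k}$ equals the constant $\mathbb E[\beta(\rho)]>0$; since the events $\{D_k<\infty\}$ are nested and $\{D_k<\infty\}\in\mathcal F_{\sigma_{k+1}}$, one gets $\mathbb P(D_k<\infty\ \forall k)=\lim_K(1-\mathbb E[\beta(\rho)])^K=0$. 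One then iterates inside the subtree rooted at the first regeneration point. This is what ``a Borel--Cantelli argument'' must mean here; your disjointness step is not a simplification of it but a step that is false.

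For (ii), the substance is right---the fresh subtree together with its environment is independent of the past, and after a regeneration the walk never leaves that subtree---but the phrase ``strong Markov property at $\Theta_k$'' is a known abuse: $\Theta_k$ looks into the future and is not a stopping time, so there is no strong Markov property to apply directly. The rigorous version is the usual regeneration-structure lemma (à la Sznitman--Zerner, or Section~3 of \cite{lyons1996biased}): one conditions on the $\sigma$-field $\mathcal G_k$ generated by $(\Theta_1,\dots,\Theta_k)$, the path $(X_n)_{n\le\Theta_k}$ and the environment seen up to $\Theta_k$, and shows that given $\mathcal G_k$ the post-$\Theta_k$ tree, environment and walk have the law of $\mathbb P(\cdot\mid\tau_{\rho_*}=\infty)$. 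That is the step you should spell out in place of ``strong Markov at $\Theta_k$.'' The observation you single out as the ``main technical obstacle'' (independence of the environment on the fresh subtree) is in fact standard for i.i.d.\ environments and is not where the difficulty lies; the difficulty is precisely the two points above.
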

	We omit the proof of the proposition since it is similar to the one in \cite{lyons1996biased}. 
 
Let $\mathbb E_{\rho}[\cdot]=\mathbb E[\cdot|X_0=\rho]$. As proved in \cite{gross2004marche}, the speed of random walk in random environment is a.s. the deterministic constant
 \begin{align}\label{eq_speed_v}
     \dfrac{\mathbb E_{\rho}[|X_{\Theta_{2}}|-|X_{\Theta_1}||\mathcal{S}]}{\mathbb E_{\rho}[\Theta_{2}-\Theta_1|\mathcal{S}] },
 \end{align}
where the numerator $\mathbb E_{\rho}[|X_{\Theta_{2}}|-|X_{\Theta_1}||\mathcal{S}]$ is always finite. Therefore, the speed is positive if and only if $\mathbb E_{\rho}[\Theta_{2}-\Theta_1|\mathcal{S}]<\infty$. We sum up these results as a fact.

	\begin{fact}\label{lemma_regspeed}
		For a transient RWRE $(X_n)_{n\ge 0}$ on a Galton-Watson tree, the following statements are equivalent:
		\begin{enumerate}
			\item $\lim_{n\to\infty} |X_n|/n$ is deterministic and positive;
			\item $\mathbb E_{\rho}[\Theta_2-\Theta_1|\mathcal{S}]<\infty$.

		\end{enumerate}
	\end{fact}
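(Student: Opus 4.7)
The plan is to derive both directions of the equivalence from the speed formula \eqref{eq_speed_v}, whose justification combines the regeneration structure of Proposition \ref{prop_infiregiid} with the a.s.\ existence of the deterministic limit $v=\lim_n |X_n|/n$ that the paper attributes to \cite{gross2004marche}.

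First, I would invoke Proposition \ref{prop_infiregiid} to get that on $\mathcal S$ the sequences $\{\Theta_{k+1}-\Theta_k\}_{k\ge 1}$ and $\{|X_{\Theta_{k+1}}|-|X_{\Theta_k}|\}_{k\ge 1}$ are i.i.d. The strong law of large numbers, in its $[0,\infty]$-valued form for nonnegative i.i.d.\ variables, then yields a.s.\ on $\mathcal S$
$$\frac{\Theta_k}{k}\longrightarrow \mu_\Theta:=\mathbb E_\rho[\Theta_2-\Theta_1\mid\mathcal S]\in(0,\infty],\qquad \frac{|X_{\Theta_k}|}{k}\longrightarrow \mu_L:=\mathbb E_\rho[|X_{\Theta_2}|-|X_{\Theta_1}|\mid\mathcal S].$$
The regeneration definition \eqref{def_reg_epoch} forces the walk to remain in the subtree rooted at $X_{\Theta_k}$ for all times $\ge\Theta_k$, and the fresh vertex $X_{\Theta_{k+1}}$ is distinct from $X_{\Theta_k}$, hence lies strictly below it; so each increment is $\ge 1$ and $\mu_L\ge 1$. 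Dividing the two convergences and using $\Theta_k\uparrow\infty$ gives $|X_{\Theta_k}|/\Theta_k\to \mu_L/\mu_\Theta$ a.s., with the convention $1/\infty=0$.

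Second, since $v=\lim_n|X_n|/n$ is already known to exist and be deterministic, evaluation along the subsequence $n=\Theta_k$ forces $v=\mu_L/\mu_\Theta$, which is exactly formula \eqref{eq_speed_v}. Given $\mu_L\ge 1$, the equivalence $v>0\iff \mu_\Theta<\infty$ reduces to showing that $\mu_L<\infty$, so that the ratio is not of indeterminate form when $\mu_\Theta=\infty$.

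The main obstacle is therefore establishing $\mu_L<\infty$. The standard argument, carried out in \cite{gross2004marche} for general RWRE on Galton--Watson trees, shows that at each fresh epoch the annealed conditional probability of an immediate regeneration (never returning to the current parent) is bounded below by a strictly positive constant, which yields geometric tails for the number of fresh epochs between two consecutive regenerations and hence for $|X_{\Theta_2}|-|X_{\Theta_1}|$. In our Dirichlet setting this uniform lower bound is not quite immediate because the step ratios $(A_i)$ are unbounded, but it can be obtained from the transience condition \eqref{eq_transience} together with basic tail estimates for the conductance $\beta(\rho)$ and for Dirichlet marginals. Once $1\le \mu_L<\infty$ is in hand, the claimed equivalence $v>0\iff \mu_\Theta<\infty$ follows at once.
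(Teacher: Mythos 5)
Your proof is correct and matches the paper's: both reduce the equivalence to the speed formula \eqref{eq_speed_v} together with the finiteness of its numerator $\mathbb{E}_\rho[|X_{\Theta_2}|-|X_{\Theta_1}|\mid\mathcal{S}]$, and both defer the latter to \cite{gross2004marche}, with your version merely filling in the SLLN derivation of \eqref{eq_speed_v} from Proposition \ref{prop_infiregiid} that the paper leaves implicit. The extra caution you raise at the end is not actually needed: by the branching property of the Galton--Watson tree and the independence of the Dirichlet weights across vertices, the annealed conditional probability that a fresh epoch is a regeneration epoch equals the fixed constant $\mathbb{E}_\rho[\beta(\rho)]>0$ regardless of the past, so the geometric-tail argument for $|X_{\Theta_2}|-|X_{\Theta_1}|$ goes through without any boundedness of the step ratios $A_i$, which is precisely why the paper can safely quote the result for general transient RWRE.
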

   In particular, the speed is deterministic for RWDE, and we deduce that the speed of ERRW is identical to that of RWDE. 
 
	\subsection{Facts about RWDE}\label{S_Facts_RWDE}
	This section covers some basic properties of Dirichlet environment. See \cite{sabot2017random} for a more thorough review.
	
 Let $p$ be a fixed positive integer and $(\alpha_1,\cdots,\alpha_p)$ be positive real numbers. We say that the random vector $D(\alpha_1\cdots \alpha_p):=(D_1,\cdots,D_p)$ has Dirichlet distribution with parameter $(\alpha_1,\cdots,\alpha_p)$ if it has density
	\begin{equation}\label{eq_density_Dirichlet}
		\frac{\Gamma(\sum_{j=1}^p \alpha_j)}{\prod_{j=1}^p\Gamma(\alpha_j)}\prod_{j=1}^p (x_j^{\alpha_j-1}\textbf{1}_{\{0<x_j<1\}})\textbf{1}_{\{\sum x_j=1\}}.
	\end{equation}
We denote by $Gamma(\alpha,r)$ the gamma distribution with parameters $\alpha$ and  $r$ whose density is 
\begin{align*}
    \frac{r^{\alpha}}{\Gamma(\alpha)}x^{\alpha-1}e^{-rx}\textbf{1}_{\{x\ge 0\}}.
\end{align*}
 By computation, we have the following lemma.
\begin{lemma}\label{prop_RWDEprop}	
If  $\Gamma_i\sim Gamma(\alpha_i,1),1\leq i\leq p$ are mutually independent random variables, then $(\Gamma_i/\sum_{j=1}^{p}\Gamma_j)_{1\le i\leq p}\sim D(\alpha_1\cdots \alpha_p)$. Besides, for $\alpha_j+s_j>0, 1\le j\le p$,
		\begin{equation*}
			{{\mathbf E}}\left[\prod_{j=1}^p (D_j)^{s_j}\right]=\frac{\Gamma(\sum_{j=1}^p \alpha_j)}{\prod_{j=1}^p\Gamma(\alpha_j)}\frac{\prod_{j=1}^p\Gamma(\alpha_j+s_j)}{\Gamma(\sum_{j=1}^p \alpha_j+\sum_{j=1}^p s_j)}.
		\end{equation*}
	\end{lemma}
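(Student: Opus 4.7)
The plan is to prove the two assertions separately, each by a standard calculation that exploits the fundamental relationship between Gamma and Dirichlet distributions.

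For the first assertion, I would perform the change of variables $(\Gamma_1,\ldots,\Gamma_p) \mapsto (D_1,\ldots,D_{p-1},S)$, where $S := \sum_{j=1}^p \Gamma_j$ and $D_i := \Gamma_i/S$ for $i<p$, with $D_p := 1-\sum_{i<p} D_i$ understood. Since $\Gamma_i = S D_i$, the Jacobian of the inverse map is $S^{p-1}$. Multiplying the product density $\prod_j \frac{1}{\Gamma(\alpha_j)} x_j^{\alpha_j-1} e^{-x_j}\mathbf{1}_{\{x_j\ge 0\}}$ by this Jacobian, the joint density of $(D_1,\ldots,D_{p-1},S)$ factors as
\[
\frac{1}{\Gamma(\sum_j \alpha_j)}S^{\sum_j\alpha_j-1}e^{-S}\mathbf{1}_{\{S\ge 0\}}\;\cdot\;\frac{\Gamma(\sum_j\alpha_j)}{\prod_j\Gamma(\alpha_j)}\prod_j D_j^{\alpha_j-1}\mathbf{1}_{\{0<D_j<1,\sum_j D_j=1\}}.
\]
The first factor is the density of $\mathrm{Gamma}(\sum_j\alpha_j,1)$, so integrating out $S$ leaves exactly the Dirichlet density \eqref{eq_density_Dirichlet} on $(D_1,\ldots,D_p)$. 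As a byproduct, one sees $S$ is independent of $(D_1,\ldots,D_p)$, a fact that will be convenient for the second part.

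For the moment formula, I would leverage this independence. Writing $\Gamma_j = S D_j$ for all $j$ yields
\[
\prod_{j=1}^p \Gamma_j^{s_j} = S^{\sum_j s_j}\prod_{j=1}^p D_j^{s_j},
\]
and since $S\perp(D_1,\ldots,D_p)$, taking expectations gives
\[
\prod_{j=1}^p \mathbf{E}[\Gamma_j^{s_j}] = \mathbf{E}\bigl[S^{\sum_j s_j}\bigr]\cdot\mathbf{E}\!\left[\prod_{j=1}^p D_j^{s_j}\right].
\]
The moments of a $\mathrm{Gamma}(\alpha,1)$ variable are $\Gamma(\alpha+s)/\Gamma(\alpha)$ whenever $\alpha+s>0$, so applying this to each $\Gamma_j$ and to $S\sim\mathrm{Gamma}(\sum_j\alpha_j,1)$, then solving for the Dirichlet moment, produces the claimed identity.

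There is no substantive obstacle here; the only points requiring mild care are verifying that the Jacobian computation is correct on the simplex (the constraint $\sum_j D_j=1$ must be handled by treating $D_p$ as a dependent coordinate, which is why I single it out), and checking the integrability condition $\alpha_j+s_j>0$ so that each $\mathbf{E}[\Gamma_j^{s_j}]$ is finite and the formula $\Gamma(\alpha_j+s_j)/\Gamma(\alpha_j)$ applies. Alternatively, one can dispense with the Gamma representation entirely and prove the moment formula directly by recognizing $\int \prod_j x_j^{\alpha_j+s_j-1}\mathbf{1}_{\{\sum x_j=1\}}$ as a Dirichlet normalizing constant, which would serve as a useful sanity check.
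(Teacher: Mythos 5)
The paper itself gives no proof of this lemma (it is stated with the comment ``By computation, we have the following lemma''), so there is nothing to compare against; your argument supplies the standard and correct derivation. The change of variables $(\Gamma_1,\ldots,\Gamma_p)\mapsto(D_1,\ldots,D_{p-1},S)$ with Jacobian $S^{p-1}$ does yield the factored density, establishing both the Dirichlet marginal and the independence of $S$ from $(D_1,\ldots,D_p)$; and the moment identity then follows from $\mathbf{E}[\Gamma_j^{s_j}]=\Gamma(\alpha_j+s_j)/\Gamma(\alpha_j)$ (valid when $\alpha_j+s_j>0$), $\mathbf{E}[S^{\sum_j s_j}]=\Gamma(\sum_j\alpha_j+\sum_j s_j)/\Gamma(\sum_j\alpha_j)$ (valid since $\sum_j(\alpha_j+s_j)>0$ is implied), and division. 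Your alternative suggestion, reading off the moment from the Dirichlet normalizing constant $\int_{\{\sum x_j=1\}}\prod_j x_j^{\alpha_j+s_j-1}$, is equally valid and arguably more direct for the second claim, but the Gamma-representation route you chose also delivers the first claim for free, which is the economical choice here.
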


As in the beginning of the introduction, let $G=(V,E)$ be an oriented graph and $(\alpha_e)_{e\in E}$ a set of positive deterministic weights. Recall that (\ref{eq_defnERRW}) defines an ERRW on $G$, a self-interacting model without the Markov property, under the measure $\mathbf{P}$. For a path $\gamma$ and an edge $e$, we denote by $N_e(\gamma)$ the local time of $e$ in $\gamma$ which is the number of times that $e$ appears in $\gamma$. 
 \begin{lemma}\label{lemma_ERRWpath}
    For a path $\gamma$ with length $n$ (i.e. with $n$ edges) on $G$,
		\begin{equation}
			\label{eq_p_path_a}
			\mathbf P((X_k)_{0\leq k\leq n}=\gamma)=\prod_{y\in V}\frac{\Gamma(\sum_{\underline{e}=y}\alpha_e)}{\Gamma(\sum_{\underline{e}=y}\alpha_e+N_e(\gamma))}\prod_{ e\in E}\frac{\Gamma(\alpha_e+N_e(\gamma))}{\Gamma(\alpha_e)}.
		\end{equation}
	\end{lemma}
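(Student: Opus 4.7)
The plan is to compute the ERRW path probability by integrating the quenched (RWDE) path probability against the Dirichlet environment, and reducing vertex by vertex to the Dirichlet moment formula of Lemma \ref{prop_RWDEprop}. This is the cleanest route because it exploits two standard facts already available in the paper: the connection $\mathbf P(\cdot)=\int {\rm P}^\eta(\cdot)\, {\rm DE}(\d\eta)$, and the independence of the Dirichlet vectors $(\eta_e)_{\underline e=y}$ across distinct vertices $y\in V$.

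First I would write, for a fixed environment $\eta$, the Markov-chain probability of the deterministic path $\gamma=(\gamma_0,\ldots,\gamma_n)$ as
\begin{equation*}
{\rm P}^\eta((X_k)_{0\le k\le n}=\gamma)=\prod_{k=0}^{n-1}\eta(\gamma_k,\gamma_{k+1})=\prod_{e\in E}\eta_e^{N_e(\gamma)},
\end{equation*}
using that each transition from $\gamma_k$ to $\gamma_{k+1}$ contributes one factor $\eta_{(\gamma_k,\gamma_{k+1})}$, and that repeated traversals of the same oriented edge collect together into a single power $N_e(\gamma)$. Averaging over $\rm DE$ and using that $(\eta_e)_{\underline e=y}$ for different vertices $y$ are independent, the expectation factorizes as
\begin{equation*}
\mathbf P((X_k)_{0\le k\le n}=\gamma)=\prod_{y\in V}{\mathbf E}\!\left[\prod_{\underline e=y}\eta_e^{N_e(\gamma)}\right].
\end{equation*}

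Next I would evaluate each vertex factor by Lemma \ref{prop_RWDEprop}: for each vertex $y$, the family $(\eta_e)_{\underline e=y}$ is $D((\alpha_e)_{\underline e=y})$-distributed, so taking $s_e=N_e(\gamma)\ge 0$ in the moment formula gives
\begin{equation*}
{\mathbf E}\!\left[\prod_{\underline e=y}\eta_e^{N_e(\gamma)}\right]=\frac{\Gamma(\sum_{\underline e=y}\alpha_e)}{\Gamma(\sum_{\underline e=y}\alpha_e+\sum_{\underline e=y}N_e(\gamma))}\prod_{\underline e=y}\frac{\Gamma(\alpha_e+N_e(\gamma))}{\Gamma(\alpha_e)}.
\end{equation*}
Multiplying these vertex contributions and reorganizing the edge product as $\prod_{y\in V}\prod_{\underline e=y}=\prod_{e\in E}$ yields exactly the right-hand side of \eqref{eq_p_path_a}. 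Only finitely many factors actually differ from $1$, since $N_e(\gamma)=0$ for edges not used by $\gamma$, so no convergence issue arises.

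There is essentially no obstacle: every ingredient is already stated in Lemma \ref{prop_RWDEprop} and in the annealed/quenched dictionary of Section \ref{S_Facts_RWDE}. The only small point worth checking is that the formula is consistent with a direct urn computation from \eqref{eq_defnERRW}, where each traversal of an edge $e$ out of $y$ contributes a numerator $\alpha_e+j$ (for $j=0,\ldots,N_e(\gamma)-1$) telescoping to $\Gamma(\alpha_e+N_e(\gamma))/\Gamma(\alpha_e)$, and each visit to $y$ (there are $\sum_{\underline e=y}N_e(\gamma)$ of them along $\gamma$) contributes a denominator $\sum_{\underline e=y}\alpha_e+k$ telescoping to $\Gamma(\sum_{\underline e=y}\alpha_e+\sum_{\underline e=y}N_e(\gamma))/\Gamma(\sum_{\underline e=y}\alpha_e)$; this recovers the same expression and confirms no indexing mismatch has slipped in.
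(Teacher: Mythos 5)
The paper does not give an explicit proof of Lemma \ref{lemma_ERRWpath}; it states it as a standard fact (following the Polya-urn/mixture representation introduced at the start of the paper) and only remarks afterwards that the infinite product is harmless because $N_e(\gamma)=0$ off the path. Your proof is correct and is precisely the argument the paper's setup anticipates: factorizing the quenched probability as $\prod_e \eta_e^{N_e(\gamma)}$, using independence of the Dirichlet vectors across vertices, and applying the moment formula of Lemma \ref{prop_RWDEprop} with exponents $s_e=N_e(\gamma)$. Your closing cross-check via the direct urn computation from \eqref{eq_defnERRW} (telescoping the numerators $\alpha_e+j$ and denominators $\sum_{\underline e=y}\alpha_e+k$) is the other standard route and is equally valid; the two are just the annealed and sequential forms of the same Polya-urn identity, so either could have served as the primary argument. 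There is no gap.
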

The right-hand side of \eqref{eq_p_path_a} is well-defined by multiplying terms of all vertices and edges on the whole graph instead of just those in $\gamma$, since for $e\notin \gamma$, $N_e(\gamma)=0$.

Next, we consider two paths on $G$. Let $\alpha+N(\gamma)$ denote the weights $(\alpha_e+N_e(\gamma))_{e\in E}$. When the graph $ G$ and weights $(\alpha_e)_{e\in E}$ are fixed, we define ${\rm DE}^{G}(\cdot|\alpha)$ (resp. $\mathbf{E}_{DE}^{G}(\cdot|\alpha)$) as the probability measure (resp. expectation) corresponding to the Dirichlet environment on $G$ with weights $(\alpha_e)_{e\in E}$. Sometimes we write $\mathbf{E}_{DE}(\cdot|\alpha)$ (resp. $\mathbf{E}_{DE}^{ G}(\cdot)$) when the graph (resp. weights) is clear from the context.
	\begin{lemma}\label{lemma_path_sym}
 Let $G=(V,E)$ be an arbitrary graph. For two paths $\gamma_1,\gamma_2$ on $ G$ with weight $(\alpha_e)_{e\in  E}$, we have
		\begin{equation*}
			\mathbf E_{DE}[{{\rm P}}^\eta(\gamma_1){{\rm P}}^\eta(\gamma_2)|\alpha]=\mathbf P(\gamma_2|\alpha+N(\gamma_1))\mathbf P(\gamma_1|\alpha)=\mathbf P(\gamma_1|\alpha+N(\gamma_2))\mathbf P(\gamma_2|\alpha).
		\end{equation*}
	\end{lemma}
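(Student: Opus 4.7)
The strategy is a direct computation using the explicit Dirichlet moment formula from Lemma \ref{prop_RWDEprop} together with the path formula of Lemma \ref{lemma_ERRWpath}. The main observation is that under the quenched law, the probability of traversing a path $\gamma$ is a monomial in the transition probabilities $\eta_e$, namely
\[
    {{\rm P}}^\eta(\gamma)=\prod_{e\in E}\eta_e^{N_e(\gamma)},
\]
so that ${{\rm P}}^\eta(\gamma_1){{\rm P}}^\eta(\gamma_2)=\prod_e \eta_e^{N_e(\gamma_1)+N_e(\gamma_2)}$ is again a monomial, with exponents $N_e(\gamma_1)+N_e(\gamma_2)$.

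First, I would integrate this monomial against the Dirichlet environment. Since the vectors $(\eta_e)_{\underline e=y}$ are independent across vertices $y\in V$, the expectation factorises into a product over $y$, and on each vertex one applies the moment formula of Lemma \ref{prop_RWDEprop}. This yields
\[
    \mathbf E_{DE}\bigl[{{\rm P}}^\eta(\gamma_1){{\rm P}}^\eta(\gamma_2)\bigm|\alpha\bigr]
    =\prod_{y\in V}\frac{\Gamma(\sum_{\underline e=y}\alpha_e)}{\Gamma(\sum_{\underline e=y}(\alpha_e+N_e(\gamma_1)+N_e(\gamma_2)))}\prod_{e\in E}\frac{\Gamma(\alpha_e+N_e(\gamma_1)+N_e(\gamma_2))}{\Gamma(\alpha_e)}.
\]

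Next, I would use Lemma \ref{lemma_ERRWpath} twice: once to express $\mathbf P(\gamma_1|\alpha)$ with the weights $\alpha$, and once to express $\mathbf P(\gamma_2|\alpha+N(\gamma_1))$ with the shifted weights. Multiplying the two expressions, the gamma factors of the form $\Gamma(\sum_{\underline e=y}\alpha_e+N_e(\gamma_1))$ appearing in the denominator of the first and the numerator of the second cancel, and likewise the edge factors $\Gamma(\alpha_e+N_e(\gamma_1))$ telescope. The result matches term-for-term the right-hand side displayed above, proving the first equality. Swapping the roles of $\gamma_1$ and $\gamma_2$ gives the second equality, but it is also transparent from the original computation since the exponent $N_e(\gamma_1)+N_e(\gamma_2)$ is symmetric in $\gamma_1,\gamma_2$.

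There is essentially no obstacle here; the only care needed is bookkeeping the products over vertices versus edges and ensuring that the Dirichlet moment formula is applied independently at each vertex $y\in V$. The statement is really a manifestation of the Pólya-urn interpretation of the Dirichlet distribution: after observing $\gamma_1$, the conditional law of the environment is again Dirichlet but with weights boosted by the edge counts $N_e(\gamma_1)$, so averaging the second path against this updated environment is the same as generating it as an ERRW with weights $\alpha+N(\gamma_1)$.
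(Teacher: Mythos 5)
Your proposal is correct and follows essentially the same route as the paper: compute $\mathbf E_{DE}[{\rm P}^\eta(\gamma_1){\rm P}^\eta(\gamma_2)|\alpha]$ as the Dirichlet moment of the monomial $\prod_e\eta_e^{N_e(\gamma_1)+N_e(\gamma_2)}$ via Lemma \ref{prop_RWDEprop}, then match the result against Lemma \ref{lemma_ERRWpath} applied to $\gamma_1$ with weights $\alpha$ and to $\gamma_2$ with weights $\alpha+N(\gamma_1)$. The only cosmetic difference is that the paper factors the computed product in one direction while you multiply the two path formulas and verify the telescoping, but the algebra is identical.
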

	\begin{proof}
		We only need to prove the first equality.
		\begin{equation*}
			\begin{split}
				&\mathbf E_{DE}[{{\rm P}}^\eta(\gamma_1){{\rm P}}^\eta(\gamma_2)|\alpha]=\mathbf E_{DE}\left[\prod_{ e\in E }\eta_e^{N_e(\gamma_1)+N_e(\gamma_2)}\right]\\
				&=\prod_{y\in V}\frac{\Gamma(\sum_{\underline{e}=y}\alpha_e)}{\Gamma(\sum_{\underline{e}=y}\alpha_e+N_e(\gamma_1)+N_e(\gamma_2))}\prod_{ e\in E}\frac{\Gamma(\alpha_e+N_e(\gamma_1)+N_e(\gamma_2))}{\Gamma(\alpha_e)}\\
				&={\mathbf P}(\gamma_1|\alpha)\prod_{y\in V}\frac{\Gamma(\sum_{\underline{e}=y}\alpha_e+N_e(\gamma_1))}{\Gamma(\sum_{\underline{e}=y}\alpha_e+N_e(\gamma_1)+N_e(\gamma_2))}\prod_{ e\in E}\frac{\Gamma(\alpha_e+N_e(\gamma_1)+N_e(\gamma_2))}{\Gamma(\alpha_e+N_e(\gamma_1))}\\
				&={\mathbf P}(\gamma_2|\alpha+N(\gamma_1)){\mathbf P}(\gamma_1|\alpha).
			\end{split}
		\end{equation*}
\end{proof}
	Lemma \ref{lemma_path_sym} shows that, in the same environment, the information one walk gives to the other is visualized as edge local times added to the weights after averaging the environment. 
 
 For completeness, we give a specific account of conditions on transience of RWDE on a Galton-Watson tree. Recall that $A_i,1\leq i\leq\nu$ are, according to (\ref{eq_Aisratioomega}), ratios of Dirichlet random variables.
 Lyons and Pemantle \cite{lyons1992random} show that a random walk in random environment is transient if and only if $\inf_{t\in[0,1]}\mathbb E[\sum_{i=1}^{\nu}A_i^t]>1$, that is in our case 
	\begin{equation}\label{eq_condition_recurrent}
		\inf_{t\in[0,1\wedge\alpha_p]}\frac{\Gamma(\alpha_p-t)\Gamma(\alpha_c+t)}{\Gamma(\alpha_c)\Gamma(\alpha_p)}>\frac{1}{m}.
	\end{equation}
	By taking the logarithm derivative with respect to $t$ on the left-hand side, we get $-\psi(\alpha_p-t)+\psi(\alpha_c+t)$, where $\psi:=\Gamma^\prime/\Gamma$ is a strictly increasing function named digamma function. Hence the minimum is attained at $t=0\vee\frac{\alpha_p-\alpha_c}2\wedge1$. We consider three cases: $\alpha_p\le \alpha_c$, $\alpha_p\ge\alpha_c+2$ and $\alpha_p\in (\alpha_c,\alpha_c+2)$ separately.
 \begin{itemize}
     \item When $\alpha_p\le \alpha_c$, the minimum is reached at $0$ and RWDE is always transient.  
     \item When $\alpha_p\ge\alpha_c+2$, the minimum is reached at $1$, so RWDE is transient if and only if $\alpha_p<m\alpha_c+1$. In this case, $\alpha_c\leq\frac1{m-1}$ always implies recurrence.
     \item Finally let us focus on the last case $\alpha_p\in(\alpha_c,\alpha_c+2)$, where the minimum is reached at $\frac{\alpha_p-\alpha_c}2$. We plug $t= \frac{\alpha_p-\alpha_c}2$ into (\ref{eq_condition_recurrent}) with $\alpha_c$ fixed, obtaining a function $g(\alpha_p)$. By taking the logarithm derivative of $g(\alpha_p)$ with respect to $\alpha_p$, we have
	\begin{equation*}
		\frac{\d}{\d \alpha_p}\log g(\alpha_p)=\frac{\d}{\d \alpha_p}\log\left(\frac{\Gamma(\frac{\alpha_p+\alpha_c}2)^2}{\Gamma(\alpha_p)\Gamma(\alpha_c)}\right)=\psi\left(\frac{\alpha_p+\alpha_c}2\right)-\psi(\alpha_p)\leq0.
	\end{equation*}
	At once we see that the minimum of $g(\alpha_p)$ for all $\alpha_p\in[\alpha_c,\alpha_c+2]$ is reached at $\alpha_p=\alpha_c+2$. We take $\alpha_p=\alpha_c+2$ and then $g(\alpha_p)=g(\alpha_c+2)>\frac{1}{m}$ becomes  $\alpha_c>\frac{1}{m-1}$. Therefore, when $\frac1{m-1}<\alpha_c<\alpha_p<\alpha_c+2$, the walk is always transient. When $\alpha_c\leq \frac1{m-1}$ there will be a critical point $\phi_0(\alpha_c)$ such that $g(\phi_0(\alpha_c))=\frac1m$. The function $g(\alpha_p)$ is decreasing with respect to $\alpha_p$, so the walk is transient only when $\alpha_p<\phi_0(\alpha_c)$.
 \end{itemize}

 In summary, we have the following proposition.
 \begin{proposition}
 \label{prop_when_transient_RWDE}
		Let $(X_n)_{n\ge 0}$ be a random walk in an $(\alpha_p,\alpha_c)$-Dirichlet environment on a Galton-Watson tree conditioned on non-extinction.
		\begin{itemize}
			\item[(a)] When $\alpha_c>\frac1{m-1}$, if $\alpha_p<m\alpha_c+1$ then RWDE is transient and otherwise recurrent;
			\item	[(b)] when $\alpha_c\le \frac1{m-1}$, if $\alpha_p<\phi_0(\alpha_c)$ then RWDE is transient and otherwise recurrent, where $\phi_0(\alpha_c)\in(\alpha_c,\alpha_c+2]$ satisfies
			\begin{equation*}
				\frac{\Gamma(\frac{\phi_0(\alpha_c)+\alpha_c}2)^2}{\Gamma(\phi_0(\alpha_c))\Gamma(\alpha_c)}=\frac1m.
			\end{equation*}
		\end{itemize}
	\end{proposition}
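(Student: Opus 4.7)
The plan is to convert the abstract Lyons--Pemantle criterion \eqref{eq_transience} into the explicit inequality \eqref{eq_condition_recurrent}, then optimize in $t$ by convexity of $\log\Gamma$, and finally partition the parameter space into the two cases appearing in the statement. First I would use the Gamma-representation of Dirichlet random variables from Lemma \ref{prop_RWDEprop}: writing $A_i = \Gamma_i/\Gamma_p$ with $\Gamma_i \sim Gamma(\alpha_c,1)$ mutually independent and independent of $\Gamma_p \sim Gamma(\alpha_p,1)$, one obtains, for any $t \in [0, \alpha_p)$,
\[
\mathbb{E}\!\left[\sum_{i=1}^\nu A_i^t\right] \;=\; m \cdot \frac{\Gamma(\alpha_c+t)\,\Gamma(\alpha_p-t)}{\Gamma(\alpha_c)\,\Gamma(\alpha_p)}.
\]
For $t \ge \alpha_p$ the right-hand side is infinite, so the infimum in \eqref{eq_transience} effectively ranges over $[0, 1 \wedge \alpha_p]$, which recovers \eqref{eq_condition_recurrent}.

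Next I would carry out a one-variable calculus analysis. Setting $h(t) := \log\Gamma(\alpha_p - t) + \log\Gamma(\alpha_c + t)$, we have $h'(t) = -\psi(\alpha_p - t) + \psi(\alpha_c + t)$ with $\psi$ the strictly increasing digamma function. Then $h$ is strictly convex and its unconstrained minimizer is $t^* = (\alpha_p - \alpha_c)/2$. Projecting onto $[0, 1 \wedge \alpha_p]$, the infimum is attained at $0$ when $\alpha_p \le \alpha_c$, at $1$ when $\alpha_p \ge \alpha_c + 2$, and at $t^*$ when $\alpha_p \in (\alpha_c, \alpha_c+2)$.

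Then I evaluate the infimum in each regime. At $t = 0$ the value equals $1 > 1/m$, so the walk is always transient when $\alpha_p \le \alpha_c$. At $t = 1$ it equals $\alpha_c/(\alpha_p - 1)$, giving transience iff $\alpha_p < m\alpha_c + 1$. At $t = t^*$ the infimum is
\[
g(\alpha_p) \;:=\; \frac{\Gamma\!\bigl(\tfrac{\alpha_p+\alpha_c}{2}\bigr)^{2}}{\Gamma(\alpha_p)\,\Gamma(\alpha_c)},
\]
and the computation $\tfrac{d}{d\alpha_p}\log g(\alpha_p) = \psi\!\bigl(\tfrac{\alpha_p+\alpha_c}{2}\bigr) - \psi(\alpha_p) \le 0$ shows $g$ is non-increasing on $[\alpha_c, \alpha_c+2]$, with boundary values $g(\alpha_c) = 1$ and $g(\alpha_c + 2) = \alpha_c/(\alpha_c+1)$.

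Finally I assemble the pieces according to whether $\alpha_c$ exceeds $1/(m-1)$. If $\alpha_c > 1/(m-1)$, then $g(\alpha_c+2) = \alpha_c/(\alpha_c+1) > 1/m$, so $g > 1/m$ throughout the middle regime and the only binding constraint comes from the $t = 1$ analysis, yielding part (a). If $\alpha_c \le 1/(m-1)$, then the $t = 1$ regime is entirely recurrent, $g(\alpha_c+2) \le 1/m$, and continuity together with strict monotonicity of $g$ produces a unique critical value $\phi_0(\alpha_c) \in (\alpha_c, \alpha_c+2]$ with $g(\phi_0(\alpha_c)) = 1/m$, so that transience holds iff $\alpha_p < \phi_0(\alpha_c)$, yielding part (b). The only substantive step is verifying the sign of $\psi((\alpha_p+\alpha_c)/2) - \psi(\alpha_p)$ and the strict monotonicity of $g$, both of which reduce to the monotonicity of $\psi$; everything else is routine bookkeeping across the three regimes.
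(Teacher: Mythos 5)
Your proposal is correct and follows essentially the same route as the paper: convert the Lyons--Pemantle criterion into \eqref{eq_condition_recurrent} via the Gamma-representation, locate the constrained minimizer by the monotonicity of $\psi$, and then split into the three regimes for $\alpha_p$ relative to $\alpha_c$, using the sign of $\psi\bigl(\tfrac{\alpha_p+\alpha_c}{2}\bigr)-\psi(\alpha_p)$ to control the middle regime. The only difference is cosmetic: you spell out the boundary values $g(\alpha_c)=1$ and $g(\alpha_c+2)=\alpha_c/(\alpha_c+1)$ and the infinite-moment justification for restricting to $t<\alpha_p$, which the paper leaves implicit.
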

	
	\section{Asymptotic distribution of the environment seen
from the particle}\label{section_asy}
 
 	\subsection{Trees, Paths and Weights} \label{sec_treesWeights}
Following Neveu \cite{neveu1986arbres}, let $\mathcal{U}:=\{\rho\}\cup\bigcup_{n\geq1}(\mathbb{N}^+)^n$ denote the set of words and $\rho$ serves as an empty word. If $u\neq \rho$, we denote by $u_*$ the parent of $u$, which is the word $\rho i_1 i_2 \dots i_{n-1}$ if $u=\rho i_1 i_2\dots i_n$. We define $T$ as a subset of $\mathcal{U}$ s.t.
	\begin{itemize}
		\item $\rho\in T$,
		\item if $x\in  T\backslash\{\rho\}$, then $x_*\in T$,
		\item if $x = \rho i_1\cdots i_n\in  T\backslash\{\rho\}$, then any word $\rho i_1\cdots i_{n-1}j$ with $j\leq i_n$ belongs to $T$.
	\end{itemize}
Given $x,y\in\mathcal U$, we write $x\preceq y$ if  $x=y$ or $y=xj_1j_2\cdots j_n$ i.e. $x$ is an ancestor of $y$. Set $x\prec y$ if $x\preceq y$ and $x\not=y$. We create a new tree called $T_*$ by adding a vertex $\rho_*$ to $T$ as the parent of $\rho$. For all $x\in T$,  $\rho_*\prec\rho\preceq x$.
 
Then we define a double tree $ T^-\rightleftharpoons  T^+$ introduced in \cite[Section 2.2]{aidekon2014speed}. Intuitively, we first pick two trees $T^-$ and $T^+$ with root $\rho^-$ and $\rho^+$, and then artificially connect $\rho^-$ and $\rho^+$. See Figure \ref{fig:double_tree}. We use the double tree to study the environment seen from the particle. Suppose a transient random walk starts from the root $\rho$ and moves on the tree $T$. After a fairly long time, the walk would be far away from the root, and the tree seen from the current location looks like a double tree $T^-\rightleftharpoons T^+$. The starting point of the walk is somewhere high on $T^-$. We call the random double tree a Galton-Watson double tree if $\mathbb T^-$ and $\mathbb T^+$ are i.i.d Galton-Watson trees.

Now we represent vertices on a double tree by two sets of words $\mathcal U^+$ and $\mathcal U^-$. Specifically, we denote by $\rho^+i_1\cdots i_n\in\mathcal U^+$ (resp. $\rho^-i_1\cdots i_n\in\mathcal U^-$ ) the vertex on the double tree $T^-\rightleftharpoons T^+\subset \mathcal U^-\cup\mathcal U^+$ corresponding to $\rho i_1\cdots i_n$ on $T^+$ (resp. $T^-$ ). If $x\in  T^+\backslash \{\rho^+\}$ (resp. $ T^-\backslash \{\rho^-\}$), we set the parent of $x$, also denoted by $x_*$, as its parent on $ T^+$ (resp. $ T^-$). We also assume that $(\rho^+)_*=\rho^-$ and $(\rho^-)_*=\rho^+$. At last, we stress that afterward we always refer to a part of the corresponding double tree when we say $x\in T^+$, i.e. $x$ is in the $T^+$ part of $T^-\rightleftharpoons T^+$. 

Both RWDE and ERRW require specific parameter settings called edge weights. Parameters $(\alpha_e)_e$ on a double tree $ T^-\rightleftharpoons T^+$ should satisfy
	\begin{equation}\label{eq_weightofDT}
		\alpha_e=\left\{\begin{array}{ll}
			\alpha_p & e=(x,x_*) \text{ for some }x\in T^-\rightleftharpoons T^+,\\
			\alpha_c & \text{otherwise}.
		\end{array}\right.
	\end{equation}
We say a double tree has $(\alpha_p,\alpha_c)$ environment if the weights of edges satisfy (\ref{eq_weightofDT}). 

For  each vertex $x\in T^-\rightleftharpoons T^+$, we assign a group of Dirichlet random variables $(\eta_e)_{\underline e=x}$ with parameters $(\alpha_e)_{\underline e=x}$ to edges starting from $x$. Note that for an arbitrary vertex $x\in  T^-\rightleftharpoons  T^+$, there is only one edge starting from $x$ having weight $\alpha_p$. 

The transition law of ERRW is determined by the trajectory of the walk. We call a sequence of words $\gamma=(y_i)_{a<i<b},y_i\in \mathcal U^-\cup\mathcal U^+$ a path if $y_i$ and $y_{i-1}$ are adjacent for $a<i-1<i<b$, where $a,b \in \mathbb R\cup \{-\infty,\infty\}$. We say $\gamma$ is left-finite (resp. right-finite) if $a>-\infty$ (resp. $b<\infty$). Here we do not care about specific indices of the path, i.e. $(y_i)_{i\leq0}$ and $(y_{i-1})_{i\leq1}$ are considered the same. We define 
$$\Omega_{\gamma}:=\{\gamma=(y_i)_{i\le 0},y_0=\rho^+;\exists M, s.t.\text{ for } i<M ,y_i=\rho^-y_i^{(1)}\dots y_i^{(n_i)}\text{ and }n_i\rightarrow\infty \text{ as }i\rightarrow -\infty \}.$$
The set $\Omega_\gamma$ contains paths coming from one infinite end of $T^-$.

For a right-finite $\gamma^{(a)}$ and a left-finite $\gamma^{(b)}$, such that the last word of $\gamma^{(a)}$ is adjacent to the first word of $\gamma^{(b)}$, we can define their concatenation, denoted by $\gamma^{(a)}\ast\gamma^{(b)}$. More precisely, if $\gamma^{(a)}=(y^{(a)}_i)_{i\leq0}$ and $\gamma^{(b)}=(y^{(b)}_i)_{i\ge 0}$, let $\gamma^{(a)}\ast\gamma^{(b)}:=(\dots, y^{(a)}_{-1},y^{(a)}_0,y^{(b)}_0,y^{(b)}_1,\dots )$. We also define $Rev(\gamma)$ as the reverse of a path $\gamma$: if $\gamma=(y_i)_{i\geq0}$, then $Rev(\gamma):=(y_{-i})_{i\leq0}$. 
	
\subsection{Environment Seen from Fresh Point}\label{S_Rev}
Before showing the convergence in distribution of what a particle sees, we first present some intuitive lemmas similar to those in \cite{{aidekon2014speed}}. Recall that we define $T_*$ as $T\cup \{\rho_*\}$.  Given a word $x\in\mathcal U$, let $T_x$ be the subtree in $T_*$ rooted at $x$ (that consists of words $y\in \mathcal{U}$ such that $xy\in T$) and $xT_x$ be the tree composed of words $\{xy,y\in T_x\}$.  Also, set $T^{< x}_*$ as the tree obtained by removing $xT_x$ from $T_*$. We denote by $T_*^{\le x}$  the tree obtained from $T^{<x}_*$ by adding the word $x$. If $x\notin T$, let $T_x$ and $ T^{\leq x}_*$ be the empty set. 
	
Now we define $\Psi_x$ as a map from a tree $T_*$ to a double tree that preserves the connection of vertices. We map $x_*$ to $\rho^-$and $x$ to $\rho^+$, that is to say, the root edge of $\Psi_x(T_*) $ is $(\Psi_x(x_*),\Psi_x(x))$. The image of $T_*^{\leq x}$, $\Psi_x(T_*^{\leq x})$, can be seen as the backward tree at $x$ defined in \cite[Section 2.3]{aidekon2014speed}. As for $xT_x$, a vertex $y=xi_1\dots i_n$ is mapped to $\Psi_x(xy):=\rho^+i_1\dots i_n$. An intuitive picture of $\Psi_x$ is to hang the tree at vertex $x$ instead of $\rho$. We denote by $\hat{x}$ the word $\Psi_x(\rho_*)$. When $x\not\in T_*$, let $\Psi_x(T_*)$ be the empty set.
	\begin{lemma}\cite[Lemma 2.1]{aidekon2014speed}\label{lemma_backwardpsi}
		Given $x\in \mathcal{U}$ and a Galton-Watson tree $\mathbb T$, the distribution of $\Psi_x(\mathbb{T}_*^{\le x})$ and $\mathbb{T}_*^{\leq \hat x}$ are the same.
	\end{lemma}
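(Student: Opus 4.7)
The approach is to reduce both sides of the claimed identity to the same explicit Galton--Watson decomposition along the ancestral spine, and then match the resulting joint laws term by term. Write $x=\rho i_1 i_2\cdots i_n$, set $x_k=\rho i_1\cdots i_k$ for $0\le k\le n$, and put $x_{-1}=\rho_*$. A first step is to unpack the labeling of $\Psi_x$ on the ``backward'' part of $\mathbb{T}_*$: under the natural convention where $\Psi_x(x_{n-k})$ fills, as a child of $\Psi_x(x_{n-k+1})$, the index slot that the distinguished descendant occupied in the original tree, the image of the ancestral spine $(x_{n-1},x_{n-2},\ldots,\rho_*)$ reads $(\rho^-,\rho^- i_n,\rho^- i_n i_{n-1},\ldots,\rho^- i_n i_{n-1}\cdots i_1)$, so that $\hat{x}=\rho^- i_n i_{n-1}\cdots i_1$, the reversed-index word.

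The key step is then to write both random trees using the same decomposition. Conditional on $\{x\in\mathbb{T}\}$, the branching property represents $\mathbb{T}_*^{\le x}$ as follows: for each $0\le k\le n-1$ the offspring count $\nu(x_k)$ has the law of $\nu$ conditioned on $\nu\ge i_{k+1}$, its $i_{k+1}$-th child is the spine continuation $x_{k+1}$, and independent unconditional Galton--Watson subtrees root at its remaining $\nu(x_k)-1$ children. Transporting this description through $\Psi_x$ yields a tree on the new spine where $\Psi_x(x_{n-k})$ has offspring count conditioned on $\nu\ge i_{n-k+1}$, its spine child carries label $i_{n-k+1}$, and its off-spine children carry i.i.d.\ unconditional Galton--Watson subtrees. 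The analogous decomposition of $\mathbb{T}_*^{\le\hat{x}}$ on $\{\hat{x}\in\mathbb{T}\}$ gives a spine of length $n+1$ whose $k$-th internal offspring count is conditioned on $\nu\ge i_{n-k+1}$, with i.i.d.\ Galton--Watson subtrees off the spine. The two joint laws agree term by term, establishing the distributional equality. On the complementary event, both random objects equal the empty tree, with matching probabilities $\prod_{k=1}^{n}\mathbb{P}(\nu\ge i_k)=\prod_{k=1}^{n}\mathbb{P}(\nu\ge i_{n-k+1})$.

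The only delicate point I foresee is the bookkeeping in the first step: one must verify that the implicit labeling convention in the definition of $\Psi_x$ really produces the reversed-index word $\hat{x}=\rho^- i_n\cdots i_1$, and that the ``distinguished'' child at each spine vertex on the image side is paired with the matching child on the $\mathbb{T}_*^{\le\hat{x}}$ side. Once this identification is settled, the branching property finishes the proof mechanically, since the decompositions on the two sides become syntactically identical.
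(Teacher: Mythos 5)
Your proof is correct. The paper does not actually give its own argument for this lemma — it is cited verbatim from Aïdékon's paper — so the right comparison is with that reference, and your spine (branching) decomposition is essentially the argument used there: both $\Psi_x(\mathbb{T}_*^{\le x})$ on $\{x\in\mathbb{T}\}$ and $\mathbb{T}_*^{\le\hat x}$ on $\{\hat x\in\mathbb{T}\}$ decompose into a deterministic labelled spine of the same length whose $k$-th internal vertex has offspring law $\nu$ conditioned on $\nu\ge i_{n-k+1}$ with spine child in slot $i_{n-k+1}$, plus i.i.d.\ unconditioned Galton--Watson subtrees off the spine, and the survival probabilities $\prod_k \mathbb{P}(\nu\ge i_k)$ match by reindexing. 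The one point you flagged as delicate — that the labelling convention for $\Psi_x$ on the backward part really sends $\rho_*$ to the reversed-index word $\hat x=\rho^- i_n\cdots i_1$ by placing the old parent in the slot vacated by the spine child — is indeed the convention used (it is forced by the requirement that $\Psi_x$ preserve offspring counts at every vertex, which is what makes the Galton--Watson density invariant), so your bookkeeping is right.
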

 
We fix the tree $T$ and specify the weight of each edge on these trees. We denote a directed edge by $e:=(\underline e,\overline e)$ and the reversed one by $\check e:=(\overline{e},\underline{e})$. Also, let $\check\gamma:=Rev(\gamma)$ be the reverse of the path $\gamma$. By definition, $\Psi_x(e)=(\Psi_x(\underline{e}),\Psi_x(\overline{e}))$ for $\underline e\in T_*$. Define 
	\begin{equation*}
		R_x:=\{e:\rho_*\preceq \overline{e}\preceq x,\rho_*\prec\underline{e}\prec x\}
	\end{equation*}
 and 
 \begin{equation*}
     \alpha'_{\Psi_x(e)}:=\left\{\begin{array}{ll}
			\alpha_{\check{e}} & e\in R_x, \\
			\alpha_e & \text{otherwise}.
		\end{array}\right.
 \end{equation*}
When $x=\rho11$, for example, $R_{\rho11}=\{(\rho,\rho1),(\rho,\rho_*),(\rho1,\rho11),(\rho1,\rho)\}$.

  \begin{figure}[ht]
     \centering
     \includegraphics[width=0.75\linewidth]{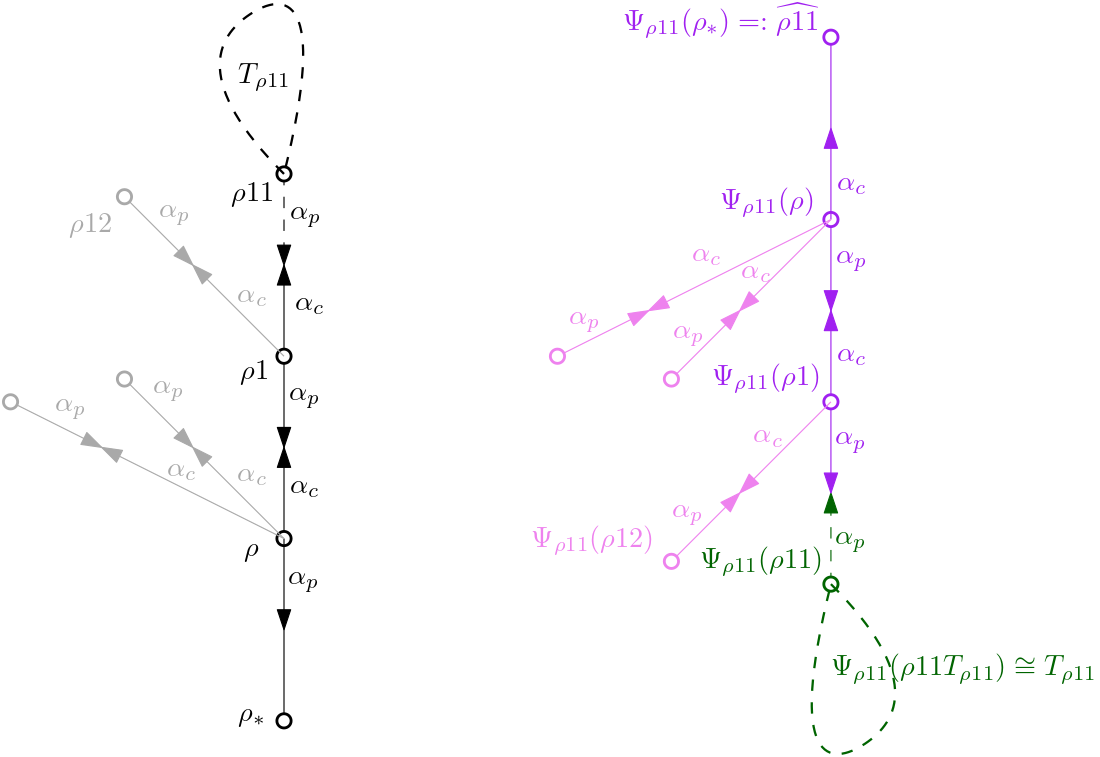}
     \caption{Trees $ T_*$ and $\Psi_{\rho11}( T_*)$ with edge weights on them. The purple part and green part of $\Psi_{\rho11}(T_*)$ correspond to $T^-$ and $T^+$ in the double tree respectively (recall Figure \ref{fig:double_tree}). Therefore, one can consider $\Psi$ as the map from a tree to a double tree.}
     \label{fig_map_psi}
 \end{figure}

We list here some basic facts about edge local times. 
 \begin{fact}\label{Fact_Psi}
For any fixed tree $T_*$, any vertex $x\in T$, and any path $\gamma=(y_i)_{0\leq i\leq n}$ on the tree such that $y_0=\rho_*,y_n=x$, we have
\begin{enumerate}
\item[(1)] for any edge $e$ such that $\underline e\in T_*$, $N_e(\check\gamma)=N_{\check e}(\gamma)$;
    \item[(2)]  for any edge $\underline e\in T_*$, $N_{\Psi_{x}(e)}(\Psi_x(\gamma))=N_e(\gamma)$, i.e. $\Psi_x$ does not change the edge local times;
    \item[(3)]  for any vertex $y\in T_*\backslash\{\rho_*,x\}$, $\sum_{\underline{e}=y}N_e(\gamma)=\sum_{\overline{e}=y}N_e(\gamma)$, i.e. inflow equals outflow except for the source and sink;
  \item[(4)]   for any edge $e\not\in R_{x}\cup\{(\rho_*,\rho),(x,x_*) \}$, $N_{e}(\gamma)=N_{\check e}(\gamma)$ since trees are acyclic.

\end{enumerate}
\end{fact}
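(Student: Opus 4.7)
All four items are elementary combinatorial properties of finite paths on trees. The plan is to dispatch (1)--(3) by direct bookkeeping, and to reduce (4) to the standard tree-walk identity after correctly identifying the exclusion set.

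For (1) I would simply unwrap the definition of $\check\gamma$: a step $(y_i,y_{i+1})=(\underline e,\overline e)$ in $\gamma$ becomes a step $(y_{n-i},y_{n-i-1})=(\overline e,\underline e)$ in $\check\gamma$, giving an obvious bijection between occurrences of $e$ in $\gamma$ and occurrences of $\check e$ in $\check\gamma$. For (2) the map $\Psi_x$ is a bijection on the vertex set sending any adjacent pair in $T_*$ to an adjacent pair in its image; applied pointwise to $\gamma$ it transfers each traversal of $e$ to a traversal of $\Psi_x(e)$, whence equality of local times. For (3) I would invoke the standard conservation-of-flow argument: at any vertex $y\notin\{\rho_*,x\}$, each entry to $y$ along some edge is followed by an exit from $y$ along some edge (since $\gamma$ neither starts nor ends at $y$), so the number of arrivals $\sum_{\overline e=y}N_e(\gamma)$ equals the number of departures $\sum_{\underline e=y}N_e(\gamma)$.

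The one substantive point is (4). The first step is to identify $R_x\cup\{(\rho_*,\rho),(x,x_*)\}$ as exactly the set of \emph{both} orientations of every edge on the geodesic from $\rho_*$ to $x$: from the definition of $R_x$ the tail of an edge must lie strictly between $\rho_*$ and $x$, which excludes precisely the forward spine edge $(\rho_*,\rho)$ and the backward spine edge $(x,x_*)$; these are exactly the two edges added separately. Once this is clear, the claim follows from a standard tree-walk fact: because $T_*$ is a tree, deleting the undirected edge underlying $e$ splits $T_*$ into two components, and if $e$ does not lie on the geodesic from $\rho_*$ to $x$ then both endpoints of $\gamma$ sit in the same component, so every traversal of $e$ by $\gamma$ must be matched by a later traversal of $\check e$ (to leave the severed subtree), giving $N_e(\gamma)=N_{\check e}(\gamma)$. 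The only place a careful reader might want more than an observation is the identification of the exclusion set with the doubly-oriented spine, which I would make precise by a short case analysis on whether $\underline e$ equals $\rho_*$, $x$, or an interior spine vertex.
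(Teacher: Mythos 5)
Your proof is correct. The paper states this as a Fact with no proof at all (it is presented as routine bookkeeping), so there is no alternative argument in the paper to compare against; your write-up simply supplies the details the authors leave implicit. Parts (1)--(3) are the standard reversal, relabeling, and conservation-of-flow observations, handled correctly. For (4) the substantive point, as you note, is identifying $R_x\cup\{(\rho_*,\rho),(x,x_*)\}$ as exactly both orientations of the edges on the geodesic from $\rho_*$ to $x$: the head constraint $\overline e\preceq x$ forces $e$ to be a spine edge, the tail constraint $\rho_*\prec\underline e\prec x$ then excludes only $(\rho_*,\rho)$ and $(x,x_*)$, and those are added back by hand. This matches the paper's worked example $R_{\rho 11}$. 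The separation argument for the non-spine edges (both endpoints of $\gamma$ lie in the same component after deleting the undirected edge underlying $e$, so traversals of $e$ and $\check e$ pair off) is exactly the right tree-walk fact and completes the proof.
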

 \begin{figure}[ht]
     \centering
     \includegraphics[width=0.75\linewidth]{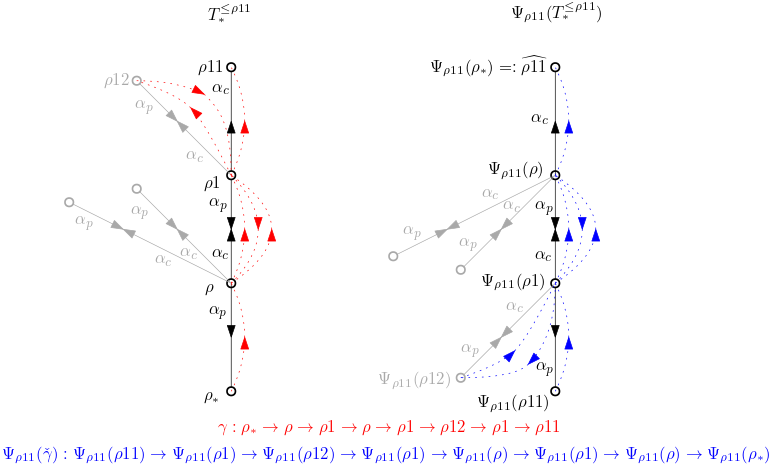}
     \caption{Left: $\gamma$ (red path) on $ T_*^{\leq \rho11}$. Right: $\Psi_{\rho11}(\check{\gamma})$ (blue path) on $\Psi_{\rho11}( T_*^{\leq \rho11})$. The black part of the tree is $R_x$, where we exchange the weights; the weights of the gray part stay the same. We use blue color to indicate that the path is reversed.}
 \end{figure}
 
 For any graph $G$ with edge weights $(\alpha_e)_{ e\in E}$, we denote by $\mathbf{P}^{G}(\cdot|\alpha)$ the law of the associated ERRW on $G$ for clarity. Sometimes we write $\mathbf P(\cdot|\alpha)$ (resp. $\mathbf{P}^{G}(\cdot)$) when there is no confusion about the graph (resp. weight). For a fixed path $\gamma=(y_i)_{0\le i\le n}$, we simplify $\mathbf P((X_i)_{0\le i\le n}=\gamma)$ as $\mathbf P(\gamma)$. For $\gamma=(y_i)_{i\ge 0}$, we define $\gamma|_{\ge 1}$ as $(y_i)_{i\ge 1}$. \begin{remark}
		In the following lemma, we use general $(\alpha_e)_e$ instead of  $(\alpha_p,\alpha_c)$ since we need the general setting for a later proof. As for the $(\alpha_p,\alpha_c)$-case, the ratio is just 1. It also explains why we do not consider the case when edges pointing towards offspring have different weights $\alpha_{c,1},\cdots,\alpha_{c,{\nu}}$.
\end{remark}
\begin{lemma}
		\label{lemma_fresh_rev}
		Fix a tree $T_*$ and consider ERRW on $T_*^{\leq x}$ and $\Psi_x(T_*^{\leq x})$. Take a finite path $\gamma=(y_i)_{0\leq i\leq n}$ such that $y_0=\rho_*,y_n=x,y_i\not\in\{\rho_*,x\},1\leq i\leq n-1$. Then we have
		\begin{equation}\label{eq-pathrev}
			\mathbf{P}^{T_*^{\leq x}}(\gamma|\alpha)=\mathbf{P}^{T_*^{\leq x}}(\gamma|_{\ge 1}|\alpha)=\frac{\alpha_{(x_*,x)}}{\alpha_{(\rho_*,\rho)}} \mathbf{P}^{\Psi_x(T_*^{\leq x})}(\Psi_x(\check{\gamma})|_{\ge1}|\alpha^\prime),
		\end{equation}
if $(\alpha_e)_{\underline e\in T_*^{\le x}}$ satisfies the condition that
\begin{align}\label{eq_condition_weight}
    \alpha_{(y,y_*)}+\alpha_{(y,yj)}= \alpha_{(y_*,y)}+\alpha_{(yj,y)},\text{ for all }y\text{ and } j\text{ s.t. } \rho\preceq y\prec yj\preceq x. 
\end{align}
\end{lemma}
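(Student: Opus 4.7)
The plan is to expand both sides with the explicit path-probability formula (2.3) from Lemma 2.2 and then match the resulting products term-by-term through the vertex--edge bijection induced by $\Psi_x$, using Fact 3.1 to convert between edge local times on $T_*^{\le x}$ and on the double tree. The first equality is almost immediate: since $\rho_*$ has $\rho$ as its only neighbour in $T_*^{\le x}$, the first step of the walk is forced, and applying (2.3) shows that the vertex factor $\Gamma(\alpha_{(\rho_*,\rho)})/\Gamma(\alpha_{(\rho_*,\rho)}+N_{(\rho_*,\rho)}(\gamma))$ at $\rho_*$ cancels exactly against the edge factor $\Gamma(\alpha_{(\rho_*,\rho)}+N_{(\rho_*,\rho)}(\gamma))/\Gamma(\alpha_{(\rho_*,\rho)})$; this cancellation is identical whether one uses $\gamma$ or $\gamma|_{\ge 1}$, and all remaining factors agree.

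For the second equality I would split the ratio of the two sides into vertex contributions and edge contributions. On the vertex side, the critical role of (4.6) is to guarantee $\sum_{\underline{e}=s_k}\alpha_e = \sum_{\underline{e'}=\Psi_x(s_k)}\alpha'_{e'}$ at each interior spine vertex $s_k$: the map $\alpha \mapsto \alpha'$ swaps $\alpha_{(s_k,s_{k\pm 1})}$ with $\alpha_{(s_{k\pm 1},s_k)}$, while off-spine weights are untouched. Combined with the inflow/outflow identity $\sum_{\underline{e}=y} N_e(\gamma) = \sum_{\underline{e'}=\Psi_x(y)} N_{e'}(\Psi_x(\check{\gamma})|_{\ge 1})$, which holds at every vertex except $\rho_* \leftrightarrow \hat{x}$, the only vertex-side discrepancy is the mismatch $K_{\rho_*} = 1$ versus $K_{\hat{x}} = 0$ on the reversed path, which contributes a factor $1/\alpha_{(\rho_*,\rho)}$.

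On the edge side, Fact 3.1 (4) makes off-spine factors cancel identically. Label the spine $\rho_* = s_0, s_1, \ldots, s_{n+1} = x$ and pair the spine edges as $\{(s_k,s_{k+1}),(s_{k+1},s_k)\}$. For $k = 1,\ldots,n-1$ both edges lie in $R_x$, and the swap $\alpha'_{\Psi_x(e)} = \alpha_{\check{e}}$ together with the asymmetry $N_{(s_k,s_{k+1})}(\gamma) = N_{(s_{k+1},s_k)}(\gamma) + 1$ on the spine makes the pair of factors interchange exactly; the boundary pair at $k = 0$ also cancels after substituting $N_{(\rho,\rho_*)}(\gamma) = 0$. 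The decisive input is that the hypothesis $y_i \neq x$ for $i < n$ forces $N_{(x_*,x)}(\gamma) = 1$ and $N_{(x,x_*)}(\gamma) = 0$, so the pair at $k = n$ collapses to the single factor $\alpha_{(x_*,x)}$. Multiplying this with the vertex-side discrepancy produces exactly $\alpha_{(x_*,x)}/\alpha_{(\rho_*,\rho)}$.

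The main obstacle I expect is the spine-edge bookkeeping: one must simultaneously track which edges lie in $R_x$, the $\pm 1$ asymmetries between $N_e$ and $N_{\check{e}}$ along the spine, and the decrement caused by removing the first step of $\Psi_x(\check{\gamma})$. Condition (4.6) enters precisely where the vertex-side balance would otherwise fail along the spine, and the hypothesis that $x$ is visited only at time $n$ is what reduces what could otherwise have been an uncontrolled ratio $\Gamma(\alpha_{(x_*,x)}+N_{(x_*,x)})/\Gamma(\alpha_{(x,x_*)}+N_{(x,x_*)})$ to the simple linear factor $\alpha_{(x_*,x)}$.
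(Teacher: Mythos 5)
Your proposal is correct and follows essentially the same route as the paper: expand both sides via the path-probability formula of Lemma~\ref{lemma_ERRWpath}, use $\Psi_x$ and the local-time identities of Fact~\ref{Fact_Psi} to match vertex factors (with condition \eqref{eq_condition_weight} ensuring the weight sums $\sum_{\underline{e}=y}\alpha_e$ are preserved along the spine), match edge factors off the spine and, via the swap $\alpha'_{\Psi_x(e)}=\alpha_{\check e}$, on the spine, and finally extract the boundary factor $\alpha_{(x_*,x)}/\alpha_{(\rho_*,\rho)}$. The paper organizes the spine-edge bookkeeping slightly differently (relabeling over the set $R_x\backslash\{(\rho,\rho_*),(x_*,x)\}$ instead of pairing $\{e,\check e\}$, and dropping $\rho_*$ and $x$ from the products at the outset rather than tracking a vertex-side discrepancy), but these are cosmetic; your use of the net-flow asymmetry $N_e(\gamma)=N_{\check e}(\gamma)+1$ on spine edges is an equivalent way to see the same interchange.
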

\begin{proof}
We call $N_{\Psi_x(e)}(\Psi_x(\check{\gamma})|_{\ge1})$ the edge local time of $\Psi_x(e)$ w.r.t. path $\Psi_x(\check{\gamma})|_{\ge1}$. It is easy to check that equation (\ref{eq_condition_weight}) implies
\begin{equation}\label{eq_alpha_and_prime}
    \sum_{\underline{e}=y}\alpha_e=\sum_{\underline{e}=y}\alpha_{\Psi_x(e)}',
    \, y\in T\backslash \{x\},
\end{equation}
since $\alpha_{(y,y_*)}+\alpha_{(y,yj)}= \alpha_{(y_*,y)}+\alpha_{(yj,y)}=\alpha_{\Psi_x((y,y_*))}'+\alpha_{\Psi_x((y,yj))}'$ for $ \rho\preceq y\prec yj\preceq x$.
According to Lemma \ref{lemma_ERRWpath}, we have
		\begin{equation*}	
  \begin{split}
      \mathbf{P}^{T_*^{\leq x}}(\gamma|\alpha)&=\prod_{y\in T_*}\frac{\Gamma(\sum_{\underline{e}=y}\alpha_e)}{\Gamma(\sum_{\underline{e}=y}\alpha_e+N_e(\gamma))}\prod_{\underline{e}\in T_*}\frac{\Gamma(\alpha_e+N_e(\gamma))}{\Gamma(\alpha_e)}\\
      &=\prod_{y\in T^{< x}}\frac{\Gamma(\sum_{\underline{e}=y}\alpha_e)}{\Gamma(\sum_{\underline{e}=y}\alpha_e+N_e(\gamma))}\prod_{\underline{e}\in T^{< x}}\frac{\Gamma(\alpha_e+N_e(\gamma))}{\Gamma(\alpha_e)},
  \end{split}
		\end{equation*}
		and
		\begin{equation*}	
			\begin{split}
				&\mathbf{P}^{\Psi_x(T_*^{\le x})}(\Psi_x(\check{\gamma})|_{\ge1}|\alpha^\prime)\\
    =&\prod_{y\in \Psi_x(T_*)}\frac{\Gamma(\sum_{\underline{e}=y}\alpha'_e)}{\Gamma(\sum_{\underline{e}=y}\alpha'_e+N_e(\Psi_x(\check{\gamma})|_{\ge 1}))}\prod_{\underline e\in \Psi_x(T_*)}\frac{\Gamma(\alpha'_e+N_e(\Psi_x(\check{\gamma})|_{\ge 1}))}{\Gamma(\alpha'_e)}\\
    =&\prod_{y\in T_*}\frac{\Gamma(\sum_{\underline{e}=y}\alpha'_{\Psi_x(e)})}{\Gamma(\sum_{\underline{e}=y}\alpha'_{\Psi_x(e)}+N_{\Psi_x(e)}(\Psi_x(\check\gamma)|_{\ge 1}))}\prod_{\underline{e}\in T_*}\frac{\Gamma(\alpha'_{\Psi_x(e)}+N_{\Psi_x(e)}(\Psi_x(\check\gamma)|_{\ge 1}))}{\Gamma(\alpha'_{\Psi_x(e)})}\\
    =&\prod_{y\in T_*^{< x}}\frac{\Gamma(\sum_{\underline{e}=y}\alpha'_{\Psi_x(e)})}{\Gamma(\sum_{\underline{e}=y}\alpha'_{\Psi_x(e)}+N_{\Psi_x(e)}(\Psi_x(\check\gamma)))}\prod_{\underline{e}\in T_*^{<x}}\frac{\Gamma(\alpha'_{\Psi_x(e)}+N_{\Psi_x(e)}(\Psi_x(\check\gamma)))}{\Gamma(\alpha'_{\Psi_x(e)})},
			\end{split}
		\end{equation*}
where the second line is obtained by the definition of $\Psi_x$.

Since $N_{\Psi_x(e)}(\Psi_x(\check\gamma))=N_{e}(\check\gamma)=N_{\check{e}}(\gamma)$ for each edge $\underline e\in T_*$ from Fact \ref{Fact_Psi} (2) and (1),
\begin{align*}
    &\prod_{y\in T_*^{< x}}\frac{\Gamma(\sum_{\underline{e}=y}\alpha'_{\Psi_x(e)})}{\Gamma(\sum_{\underline{e}=y}\alpha'_{\Psi_x(e)}+N_{\Psi_x(e)}(\Psi_x(\check\gamma)))}\prod_{\underline{e}\in T_*^{<x}}\frac{\Gamma(\alpha'_{\Psi_x(e)}+N_{\Psi_x(e)}(\Psi_x(\check\gamma)))}{\Gamma(\alpha'_{\Psi_x(e)})}\\
    =&\prod_{y\in T_*^{< x}}\frac{\Gamma(\sum_{\underline{e}=y}\alpha'_{\Psi_x(e)})}{\Gamma(\sum_{\underline{e}=y}\alpha'_{\Psi_x(e)}+N_{\check e}(\gamma))}\prod_{\underline{e}\in T_*^{< x}}\frac{\Gamma(\alpha'_{\Psi_x(e)}+N_{\check e}(\gamma))}{\Gamma(\alpha'_{\Psi_x(e)})}\\
    =&\prod_{y\in T^{< x}}\frac{\Gamma(\sum_{\underline{e}=y}\alpha'_{\Psi_x(e)})}{\Gamma(\sum_{\underline{e}=y}\alpha'_{\Psi_x(e)}+N_{\check e}(\gamma))}\prod_{\underline{e}\in T_*^{< x}}\frac{\Gamma(\alpha'_{\Psi_x(e)}+N_{\check e}(\gamma))}{\Gamma(\alpha'_{\Psi_x(e)})}.
\end{align*}
The last line follows from the fact that $N_{(\rho,\rho_*)}(\gamma)=0$.

For $y\notin \{\rho_*,x\}$, it holds that $\sum_{\underline{e}=y}N_{\check e}(\gamma)=\sum_{\overline{e}=y}N_e(\gamma)=\sum_{\underline{e}=y}N_e(\gamma)$ by Fact \ref{Fact_Psi} (3). Therefore, together with (\ref{eq_alpha_and_prime}), we have
		\begin{equation*}
			\prod_{y\in T^{< x}}\frac{\Gamma(\sum_{\underline{e}=y}\alpha'_{\Psi_x(e)})}{\Gamma(\sum_{\underline{e}=y}\alpha'_{\Psi_x(e)}+N_{\check{e}}(\gamma))}	=\prod_{y\in T^{< x} }\frac{\Gamma(\sum_{\underline{e}=y}\alpha_e)}{\Gamma(\sum_{\underline{e}=y}\alpha_e+N_e(\gamma))}.
		\end{equation*}
We also have $N_e(\gamma)=N_{\check{e}}(\gamma),e\not\in R_x\cup \{(\rho_*,\rho),(x,x_*)\}$ (Fact \ref{Fact_Psi}(4)), and for $e\not\in R_x\cup \{(\rho_*,\rho),(x,x_*)\}$, $\alpha'_{\Psi_x(e)}=\alpha_e$ by definition. Hence,
		\begin{equation*}
			\prod_{e\not\in R_x\cup \{(\rho_*,\rho),(x,x_*)\}}\frac{\Gamma(\alpha'_{\Psi_x(e)}+N_{\check e}(\gamma))}{\Gamma(\alpha'_{\Psi_x(e)})}=\prod_{e\not\in R_x\cup \{(\rho_*,\rho),(x,x_*)\}}\frac{\Gamma(\alpha_e+N_e(\gamma))}{\Gamma(\alpha_e)}.
		\end{equation*}
Finally, for $e\in R_x\backslash\{(\rho,\rho_*),(x_*,x) \}$, we have $\alpha'_{\Psi_x(e)}=\alpha_{\check e}$. Thus,
		\begin{align*}
			\prod_{e\in R_x\backslash\{(\rho,\rho_*),(x_*,x)\}}\frac{\Gamma(\alpha'_{\Psi_x(e)}+N_{\check e}(\gamma))}{\Gamma(\alpha'_{\Psi_x(e)})}&=\prod_{e\in R_x\backslash\{(\rho,\rho_*),(x_*,x)\}}\frac{\Gamma(\alpha_{\check{e}}+N_{\check{e}}(\gamma))}{\Gamma(\alpha_{\check{e}})}\\
   &=\prod_{ e \in R_x\backslash\{(\rho,\rho_*),(x_*,x)\}}\frac{\Gamma(\alpha_e+N_e(\gamma))}{\Gamma(\alpha_e)}.
		\end{align*}
The path $\gamma$ never visits $\{\rho_*,x\}$ except for the start and end, which implies $N_{(\rho_*,\rho)}(\gamma)=N_{(x_*,x)}(\gamma)=1$. Till now, by canceling the corresponding terms, we only have
\begin{equation*}
\begin{split}
    \frac{\Gamma(\alpha_{(x_*,x)}+N_{(x_*,x)}(\gamma))}{\Gamma(\alpha_{(x_*,x)})}
    =\frac{\Gamma(\alpha_{(x_*,x)}+1)}{\Gamma(\alpha_{(x_*,x)})}=\alpha_{(x_*,x)}
\end{split}
\end{equation*}
left on the left-hand side of (\ref{eq-pathrev}). In the same way, the right-hand side of (\ref{eq-pathrev}) also remains
\begin{equation*}
\begin{split}
        \frac{\alpha_{(x_*,x)}}{\alpha_{(\rho_*,\rho)}}&\frac{\Gamma(\alpha'_{\Psi_x((\rho,\rho_*))}+N_{(\rho_*,\rho)}(\gamma))}{\Gamma(\alpha'_{\Psi_x((\rho,\rho_*))})}=\frac{\alpha_{(x_*,x)}}{\alpha_{(\rho_*,\rho)}}\frac{\Gamma(\alpha_{(\rho_*,\rho)}+1)}{\Gamma(\alpha_{(\rho_*,\rho)})} =\alpha_{(x_*,x)}.
\end{split}
\end{equation*}
The proof is complete
\end{proof}
As an immediate consequence of Lemma \ref{lemma_fresh_rev}, we present the distribution of the tree and path seen at a fresh point $X_{\theta_k}$ (see Section \ref{Sec_Fact_Reg} for the definition of $\theta_k$). Recall that we denote by  $\mathbb P(\cdot)$ the annealed distribution of the ERRW $(X_n)_{n\geq 0}$ on a Galton-Watson tree and we write $\mathbb P_x(\cdot):=\mathbb P(\cdot|X_0=x)$ for $x\in \mathbb T_*$ and $\mathbb E_x$ the corresponding expectation.
	\begin{corollary}\label{cor-rev-env}
		
		Let $\mathbb T$ be a Galton-Watson tree. For an $(\alpha_p,\alpha_c)$-ERRW $(X_n)_{n\ge 0}$, under $\mathbb{P}_{\rho_*}(\cdot|\theta_k<\tau_{\rho_*})$, we have
		\begin{equation*}
			\left(\Psi_{X_{\theta_k}}(\mathbb{T}^{\leq X_{\theta_k}}_*),\Psi_{X_{\theta_k}}((X_{\theta_k-j})_{0\le j\leq\theta_k})\right)\overset{(d)}{=}\left(\mathbb{T}^{\leq X_{\theta_k}}_*,(X_j)_{0\le j\leq\theta_k}\right).
		\end{equation*}
		In particular, $\hat{X}_{\theta_k}:=\Psi_{X_{\theta_k}}(\rho_*)$ follows the distribution of the $k$-th fresh point of an $(\alpha_p,\alpha_c)$-ERRW.     
	\end{corollary}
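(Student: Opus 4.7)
The plan is to verify the equality in distribution by testing against an arbitrary bounded measurable function $F$, expanding both sides as weighted sums over triples (Galton--Watson tree $T$, marked vertex $x\in T$, path $\gamma$ on $T_*$ from $\rho_*$ to $x$), and then transporting one sum to the other through the bijection $(T_*^{\le x},\gamma)\mapsto (\Psi_x(T_*^{\le x}),\Psi_x(\check\gamma))$ given by path reversal plus $\Psi_x$.

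First I would write
\[
\mathbb E_{\rho_*}\!\left[F\!\left(\mathbb T_*^{\le X_{\theta_k}},(X_j)_{0\le j\le \theta_k}\right)\mathbf 1_{\theta_k<\tau_{\rho_*}}\right]=\sum_{T}\mathbb P(\mathbb T=T)\sum_{x,\gamma}\mathbf P^{T_*}(\gamma)\,F(T_*^{\le x},\gamma),
\]
where the inner sum ranges over $x\in T$ and finite paths $\gamma$ from $\rho_*$ to $x$ which do not visit $\{\rho_*,x\}$ in between and for which $x$ is the $k$-th vertex reached for the first time. Such a $\gamma$ never leaves $T_*^{\le x}$, so $\mathbf P^{T_*}(\gamma)=\mathbf P^{T_*^{\le x}}(\gamma)$; integrating out the (unused) subtree $x\mathbb T_x$, which carries total Galton--Watson mass $1$, collapses the outer sum to one over truncated trees only.

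Next I would apply Lemma~\ref{lemma_fresh_rev}: the $(\alpha_p,\alpha_c)$-setup trivially satisfies \eqref{eq_condition_weight} (since $\alpha_p+\alpha_c=\alpha_c+\alpha_p$) and gives the ratio $\alpha_{(x_*,x)}/\alpha_{(\rho_*,\rho)}=1$, yielding
\[
\mathbf P^{T_*^{\le x}}(\gamma)=\mathbf P^{\Psi_x(T_*^{\le x})}\!\left(\Psi_x(\check\gamma)|_{\ge 1}\right).
\]
Because $\rho^+$ has only the single neighbour $\rho^-$ in $\Psi_x(T_*^{\le x})$ its first transition is forced, so the $|_{\ge 1}$ restriction is harmless. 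Setting $(\hat T,\hat\gamma):=(\Psi_x(T_*^{\le x}),\Psi_x(\check\gamma))$, the map $\Psi_x$ only relabels vertices while preserving offspring counts, so Lemma~\ref{lemma_backwardpsi} identifies the truncated Galton--Watson weight of $T_*^{\le x}$ with that of $\hat T$ regarded as a truncated tree up to $\hat x$.

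The remaining task, which I view as the main combinatorial obstacle, is to check that the validity conditions on $(T_*^{\le x},\gamma)$ transport exactly to the analogous conditions on $(\hat T,\hat\gamma)$: namely, $\hat\gamma$ runs from $\rho^+$ to $\hat x$, avoids both endpoints strictly in between, and makes $\hat x$ its own $k$-th fresh point. The first three properties are immediate from reversing $\gamma$ and applying $\Psi_x$. For the last, observe that $\gamma$ visits exactly the $k+1$ distinct vertices $\rho_*,X_{\theta_1},\dots,X_{\theta_k}$, with $\rho_*$ appearing only at time $0$; reversing $\gamma$ therefore makes $\rho_*$ the last-discovered vertex of $\check\gamma$, hence its $k$-th fresh point, a property preserved by the relabeling $\Psi_x$ which sends $\rho_*$ to $\hat x$. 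Once these ingredients are combined, the expanded sum for the LHS coincides term by term with the analogous expansion of the RHS, giving the joint distributional identity. Dividing by the common normalization $\mathbb P_{\rho_*}(\theta_k<\tau_{\rho_*})$ yields the conditional statement, and the ``in particular'' clause on $\hat X_{\theta_k}$ is the endpoint marginal.
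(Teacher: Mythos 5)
Your proposal is correct and matches what the paper means by calling the corollary an ``immediate consequence'' of Lemma~\ref{lemma_fresh_rev}: you expand both sides against a test functional, sum over (tree, fresh vertex, path) triples, observe that the path stays in $\mathbb T_*^{\le X_{\theta_k}}$ so the descendant subtree integrates out, apply Lemma~\ref{lemma_fresh_rev} (with ratio $1$ in the $(\alpha_p,\alpha_c)$ case) for the ERRW path weight and Lemma~\ref{lemma_backwardpsi} for the tree law, and verify that the fresh-point structure transports under reversal and $\Psi_x$. The only blemish is the miscount ``the first three properties\dots For the last'' (you list three properties and mean that the first two are immediate), which does not affect the argument.
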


Next, we continue to investigate path reversibility for ERRW, when $\gamma$ does not necessarily stop at the first arrival of a point, i.e. $\gamma$ can be decomposed into a path first arriving at the point and several loops rooted at it. Recall that the function $\Phi$ was defined in \eqref{eq_phi_intro}.  
	\begin{lemma}
		\label{rev_XY}
		Fix a tree $T_*$ with $(\alpha_p,\alpha_c)$-weights. Let $\gamma_1$ be a path as in Lemma \ref{lemma_fresh_rev}, and $\gamma_2$ an arbitrary path that starts from $x$, ends at a vertex $y\in xT_x$,  and never visits $\rho_*$. We have
		\begin{equation}\label{eq_path_tree_to_Doubletree}
  \begin{split}
     & \mathbf{P}^{T_*}(\gamma_1|\alpha)\mathbf{P}^{T_*}(\gamma_2|\alpha+N(\gamma_1))\\
   =&\Phi(N_{(x_*,x)}(\gamma_2))\mathbf{P}^{\Psi_x(T_*)}(\Psi_x(\check \gamma_1)|_{\ge 1}|\alpha^\prime)\mathbf{P}^{\Psi_x(T_*) }(\Psi_x(\gamma_2)|\alpha^\prime+N(\Psi_x(\check \gamma_1)|_{\ge 1})).
  \end{split}
		\end{equation}
	\end{lemma}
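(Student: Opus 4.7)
The plan is to expand both sides of \eqref{eq_path_tree_to_Doubletree} into products of gamma functions via Lemma \ref{lemma_ERRWpath} and then match them term by term along the bijection $\Psi_x$. The two probability factors on the LHS telescope in the gamma ratios to yield
\[
\mathbf P^{T_*}(\gamma_1|\alpha)\,\mathbf P^{T_*}(\gamma_2|\alpha+N(\gamma_1))=\prod_{y\in T_*}\frac{\Gamma(\sum_{\underline e=y}\alpha_e)}{\Gamma(\sum_{\underline e=y}(\alpha_e+N_e(\gamma)))}\prod_{\underline e\in T_*}\frac{\Gamma(\alpha_e+N_e(\gamma))}{\Gamma(\alpha_e)},
\]
with $N_e(\gamma):=N_e(\gamma_1)+N_e(\gamma_2)$, and an analogous formula on $\Psi_x(T_*)$ holds for the RHS with $\alpha'$ replacing $\alpha$ and combined local times $\tilde N_{\Psi_x(e)}:=N_{\check e}(\gamma_1)+N_e(\gamma_2)-\mathbf{1}_{e=(x,x_*)}$; the indicator reflects that $|_{\ge 1}$ erases exactly the unique crossing of $(x,x_*)$ from the reversed path $\check\gamma_1$.

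The bulk of the matching is then routine. For every vertex $y\ne\rho_*$, condition \eqref{eq_condition_weight} (valid for $(\alpha_p,\alpha_c)$-weights) forces $\sum_{\underline e=y}\alpha_e=\sum_{\underline e=y}\alpha'_{\Psi_x(e)}$, and flow conservation of $\gamma_1$ at interior vertices together with that of $\gamma_2$ away from its endpoints yields $\sum_{\underline e=y}N_e(\gamma)=\sum_{\underline e=y}\tilde N_{\Psi_x(e)}$, so vertex factors agree there. For edges off the path from $\rho_*$ to $x$ (and for edges strictly below $x$), the weight is preserved by $\Psi_x$ and acyclicity of $\gamma_1$ gives $N_e(\gamma_1)=N_{\check e}(\gamma_1)$, so each such edge factor matches individually. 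For each interior pair of opposite path edges $(e,\check e)\subset R_x$, the weights are swapped $\alpha_c\leftrightarrow\alpha_p$; combining the unit-net-flux property of $\gamma_1$ ($N_e(\gamma_1)=N_{\check e}(\gamma_1)\pm 1$) with the zero-net-flux property of $\gamma_2$ ($N_e(\gamma_2)=N_{\check e}(\gamma_2)$, since both its endpoints lie in $xT_x$), the two pair products on the two sides reduce to the same expression and cancel.

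The only remaining discrepancies are localized at the boundary $\rho_*$ and at the edge $(x_*,x)$. At the boundary, the vertex $\rho_*$ has $\sum N_e=1$ on the LHS while its image $\hat x$ has $\sum\tilde N=0$ on the RHS, and the edges $(\rho_*,\rho)$ and $(\rho,\rho_*)\in R_x$ together give a net LHS/RHS ratio of $1/\alpha_c$ from this region. At $x$, the edge $(x,x_*)$ matches directly because the $|_{\ge 1}$ removes exactly the one $\gamma_1$-crossing of $(x_*,x)$, balancing the RHS local time against the LHS; the edge $(x_*,x)\in R_x$, however, has LHS factor $\Gamma(\alpha_c+1+k)/\Gamma(\alpha_c)$ and RHS factor $\Gamma(\alpha_p+k)/\Gamma(\alpha_p)$ where $k:=N_{(x_*,x)}(\gamma_2)$, whose ratio is $\alpha_c\,\Phi(k)$. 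Multiplying the boundary ratio $1/\alpha_c$ by this edge ratio produces precisely $\Phi(N_{(x_*,x)}(\gamma_2))$, which is the claimed factor. The main technical obstacle is the interior path-edge pair cancellation: it requires a careful simultaneous use of the $\gamma_1$-asymmetry, the $\gamma_2$-symmetry, and the $\Psi_x$-weight-swap, and is best carried out by writing the two pair products explicitly after substituting $N_e(\gamma_1)=N_{\check e}(\gamma_1)+1$ and $N_e(\gamma_2)=N_{\check e}(\gamma_2)$.
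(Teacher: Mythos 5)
Your proof is correct, and it reaches the claimed identity by the same fundamental mechanism as the paper (expanding both sides into gamma-function products via Lemma~\ref{lemma_ERRWpath} and matching term by term along $\Psi_x$, with the discrepancy at the boundary and at $(x_*,x)$ producing the factor $\Phi(N_{(x_*,x)}(\gamma_2))$), but the organization is genuinely different. The paper first uses the symmetry of Lemma~\ref{lemma_path_sym} to rewrite the left-hand side as $\mathbf P(\gamma_2|\alpha)\,\mathbf P(\gamma_1|\alpha+N(\gamma_2))$ and then establishes two separate sub-identities: \eqref{eq_rev2}, handled by observing that the shifted weights $\alpha+N(\gamma_2)$ still satisfy condition \eqref{eq_condition_weight} so Lemma~\ref{lemma_fresh_rev} can be invoked directly, and \eqref{eq_rev1}, proved by a direct gamma-ratio comparison for $\gamma_2$ alone; the two correction prefactors are then multiplied to give $\Phi(k)$. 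You instead telescope the two probability factors on each side into a single gamma product with combined local times $N_e(\gamma_1)+N_e(\gamma_2)$ (resp.\ $N_{\check e}(\gamma_1)+N_e(\gamma_2)-\mathbf 1_{\{e=(x,x_*)\}}$) and match the full products at once. Your route is more self-contained -- it does not reuse Lemma~\ref{lemma_fresh_rev} and avoids the slightly delicate step of verifying that the modified weights satisfy \eqref{eq_condition_weight} -- at the price of re-performing the interior-pair cancellation already embedded in that lemma. The paper's route is more modular and makes clearer why Lemma~\ref{lemma_fresh_rev} was stated for general weights $(\alpha_e)_e$ rather than just the $(\alpha_p,\alpha_c)$ case. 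Both arrive at the same bookkeeping at the boundary: your factor $1/\alpha_c$ from $\{\rho_*,(\rho_*,\rho),(\rho,\rho_*)\}$ times $\alpha_c\Phi(k)$ from $(x_*,x)$ agrees with the paper's product of the two explicit prefactors.
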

 \begin{figure}[ht]
     \centering
     \includegraphics[width=0.75\linewidth]{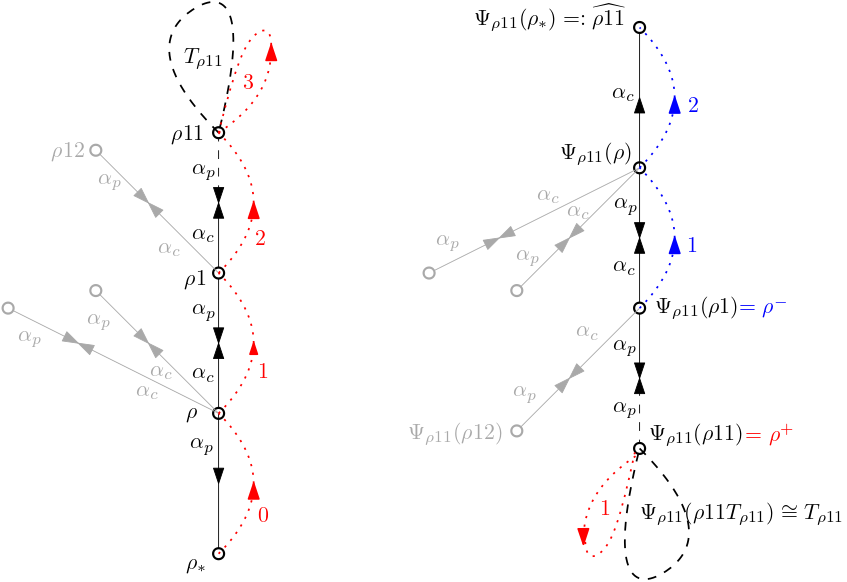}
     \caption{Left: the red path ends at $\rho11$ and we decompose it into $\gamma_1$ and $\gamma_2$ by 
the time $\tau_{\rho11}$. Right: the blue path is $\Psi_{\rho11}(\check \gamma_1)|_{\ge 1}$ and the red path is $\Psi_{\rho11}(\gamma_2)$. Recall that in Figure \ref{fig:XYonDoubleTree}, we use red color for $Y$ and blue color for $X$, which is consistent with the colors here.}
     \label{fig_two_paths}
 \end{figure}
	\begin{proof}
		We claim that we only need to prove
		\begin{equation}
			\label{eq_rev1}
			{\mathbf P}^{ T_*}(\gamma_2|\alpha)=\frac{\Gamma(\alpha_{(x,x_{*})})\Gamma(\alpha_{(x_*,x)}+N_{(x_*,x)}(\gamma_2))}{\Gamma(\alpha_{(x_*,x)})\Gamma(\alpha_{(x,x_{*})}+N_{(x_*,x)}(\gamma_2))}{\mathbf P}^{\Psi_x( T_*)}(\Psi_x(\gamma_2)|\alpha')
		\end{equation}
		and
		\begin{equation}
			\label{eq_rev2}
			{\mathbf P}^{ T_*}(\gamma_1|\alpha+N(\gamma_2))=\frac{\alpha_{(x_*,x)}+N_{(x_*,x)}(\gamma_2)}{\alpha_{(\rho_*,\rho)}+N_{(\rho_*,\rho)}(\gamma_2)}{\mathbf P}^{\Psi_x( T_*)}(\Psi_x(\check \gamma_1)|_{\ge 1}|\alpha'+N(\Psi_x(\gamma_2))),
		\end{equation}
		where $\alpha,\alpha'$ are weights of $ T_*,\Psi_x( T_*)$ respectively. In fact, for the $(\alpha_p,\alpha_c)$ case, $\alpha_{(x,x_*)}=\alpha_p$ and $\alpha_{(x_*,x)}=\alpha_{(\rho_*,\rho)}=\alpha_c$. Also, by the assumption on $\gamma_2$ we have $N_{(\rho_*,\rho)}(\gamma_2)=0$. Hence, by the formula $\Gamma(\alpha)\alpha=\Gamma(\alpha+1)$, it holds that
  \begin{align*}
      \frac{\Gamma(\alpha_{(x,x_{*})})\Gamma(\alpha_{(x_*,x)}+N_{(x_*,x)}(\gamma_2))}{\Gamma(\alpha_{(x_*,x)})\Gamma(\alpha_{(x,x_{*})}+N_{(x_*,x)}(\gamma_2))}\times\frac{\alpha_{(x_*,x)}+N_{(x_*,x)}(\gamma_2)}{\alpha_{(\rho_*,\rho)}+N_{(\rho_*,\rho)}(\gamma_2)}=\frac{\Gamma(\alpha_{c}+1+N_{(x_*,x)}(\gamma_2))\Gamma(\alpha_p)}{\Gamma(\alpha_{p}+N_{(x_*,x)}(\gamma_2))\Gamma(\alpha_c+1)}
  \end{align*}
  and the lemma follows.
		
		We first prove (\ref{eq_rev2}). Notice that $N_{\check{e}}(\gamma_2)=N_e(\gamma_2)$ for $e$ such that $\underline{e}\in  T^{< x}_*$ since $\gamma_2$ starts from $x$ and ends at $y\in  xT_x$. By Fact \ref{Fact_Psi}(2), we also have $N_{\Psi_x(e)}(\Psi_x(\gamma_2))=N_e(\gamma_2)$ and $N_{\Psi_x(\check{e})}(\Psi_x(\gamma_2))=N_{\check{e}}(\gamma_2)$. We replace the weights $\alpha$ by $\alpha+N(\gamma_2)$ and $\alpha^\prime$ by $\alpha^\prime+N(\Psi_x(\gamma_2))$ respectively in (\ref{eq-pathrev}). Since $N_{\check{e}}(\gamma_2)=N_e(\gamma_2)=N_{\Psi_x(e)}(\Psi_x(\gamma_2))=N_{\Psi_x(\check{e})}(\Psi_x(\gamma_2))$ for $e$ such that $\underline{e}\in  T^{< x}_*$, the new sets of weights, $(\alpha_e+N_e(\gamma_2))_{\underline e\in T_*}$ and $(\alpha_e'+N_{\Psi_x(e)}(\Psi_x(\gamma_2)))_{\underline e\in \Psi_x(T_*)}$, also satisfy the condition in Lemma \ref{lemma_fresh_rev}. Thus Lemma \ref{lemma_fresh_rev} implies (\ref{eq_rev2}).
  
		For (\ref{eq_rev1}), by Lemma \ref{lemma_ERRWpath}, we have
  \begin{equation}
			\label{eq_p_temp_1}
			\mathbf P^{ T_*}(\gamma_2|\alpha)=\prod_{y\in T_*}\frac{\Gamma(\sum_{\underline{e}=y}\alpha_e)}{\Gamma(\sum_{\underline{e}=y}\alpha_e+N_e(\gamma_2))}\prod_{\underline e\in T_*}\frac{\Gamma(\alpha_e+N_e(\gamma_2))}{\Gamma(\alpha_e)},
		\end{equation}
	and
\begin{equation}
\label{eq_p_temp_2}
			{\mathbf P}^{\Psi_x( T_*)}(\Psi_x(\gamma_2)|\alpha')=\prod_{y\in T^*}\frac{\Gamma(\sum_{\underline{e}=y}\alpha_{\Psi_x(e)}')}{\Gamma(\sum_{\underline{e}=y}\alpha_{\Psi_x(e)}'+N_e(\gamma_2))}\prod_{\underline e\in T_*}\frac{\Gamma(\alpha_{\Psi_x(e)}'+N_e(\gamma_2))}{\Gamma(\alpha_{\Psi_x(e)}')}.
		\end{equation}
Since for any vertex $y\in T_*$, $\sum_{\underline{e}=y}\alpha_e=\sum_{\underline{e}=\Psi_x(y)}\alpha_{\Psi_x(e)}'$, we have 
\begin{align*}
   \prod_{y\in T_*}\frac{\Gamma(\sum_{\underline{e}=y}\alpha_e)}{\Gamma(\sum_{\underline{e}=y}\alpha_e+N_e(\gamma_2))}= \prod_{y\in T^*}\frac{\Gamma(\sum_{\underline{e}=y}\alpha_{\Psi_x(e)}')}{\Gamma(\sum_{\underline{e}=y}\alpha_{\Psi_x(e)}'+N_e(\gamma_2))}.
\end{align*}

Note that $\alpha_e=\alpha'_{\Psi_x(e)}$ for $e$ such that $\underline e\in xT_x$. For $e\in R_x\backslash\{(x_*,x),(\rho,\rho_*)\}$, we have $\check{e}\in R_x\backslash\{(x_*,x),(\rho,\rho_*)\}$, $\alpha_e=\alpha^\prime_{\Psi_x(\check e)}$ by definition and $N_e(\gamma_2)=N_{\check{e}}(\gamma_2)$ by Fact \ref{Fact_Psi}(2). For $e\in \{e:\underline{e}\in T^{<x}\}\backslash R_x$, we have $\alpha_{e}=\alpha^\prime_{\Psi_x(e)}$. Repeating our discussion in Lemma \ref{lemma_fresh_rev}, the only different parts between (\ref{eq_p_temp_1}) and (\ref{eq_p_temp_2}) are the probabilities of going from $x_*$ to $x$ and from $\Psi_x(x_*)$ to $\Psi_x(x)$, which are 
		\begin{equation*}
			\frac{\Gamma(\alpha_{(x_*,x)}+N_{(x_*,x)}(\gamma_2))}{\Gamma(\alpha_{(x_*,x)})},
		\end{equation*}
		and
		\begin{equation*}
			\frac{\Gamma(\alpha_{\Psi_x((x_*,x))}'+N_{(x_*,x)}(\gamma_2))}{\Gamma(\alpha_{\Psi_x((x_*,x))}')}
		\end{equation*}
	respectively. Their ratio is 
 \begin{align*}
     \frac{\Gamma(\alpha_{(x,x_{*})})\Gamma(\alpha_{(x_*,x)}+N_{(x_*,x)}(\gamma_2))}{\Gamma(\alpha_{(x_*,x)})\Gamma(\alpha_{(x,x_{*})}+N_{(x_*,x)}(\gamma_2))}.
 \end{align*}

	\end{proof}
	\begin{remark}\label{RM_fixenv}
	The equation (\ref{eq_path_tree_to_Doubletree}) still holds when we condition on the environment $(\eta_e)_{\underline{e}\in x T_x}$ and $(\eta_e)_{\underline{e}\in \Psi_x(x T_x)}$ (which means that on $xT_x$ the walker follows the quenched law of a RWDE).	
 \end{remark}

 \subsection{Asymptotic distribution of the environment seen from a particle}
\label{section_asymptotic_dist}
The main result of this section is Theorem \ref{prop_asy_env} which implies that, under the condition \eqref{eq_assump_finite}, the asymptotic distribution of the environment seen from ERRW is the measure $\mu_{ER}$ defined in the introduction, renormalized to be a probability measure. We follow the strategy in \cite{aidekon2014speed} to establish this result.

We first state a lemma similar to \cite[ Lemma 4.4]{aidekon2014speed}. 
\begin{lemma}\label{Lm_exp_bais}
Let $Y$ be a RWDE on Galton-Watson double tree with $(\alpha_p,\alpha_c)$-environment, then
{\rm
    \begin{equation*}
        \begin{split}
             & {\rm E}_{\rho^+}^{\eta}\left[\sum_{l\ge 0}\Phi(N_{(\rho^-,\rho^+)}^{Y}(l))\textbf{1}_{\{Y_{l}=\rho^+\}}\right]\\
    =&\frac{(1-\beta(\rho^+))}{\eta(\rho^+,\rho^-)}\sum_{k\geq0}\Phi(k)(1-\beta(\rho^+))^k(1-\beta(\rho^-))^k.
        \end{split}
    \end{equation*}
    }
\end{lemma}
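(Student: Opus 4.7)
The strategy is to decompose the left-hand side according to the value of $N^Y_{(\rho^-,\rho^+)}(l)$ and exploit the i.i.d.\ excursion structure of $Y$ at $\rho^+$ under the quenched law. Write $M_k:=\#\{l\ge 0:Y_l=\rho^+,\,N^Y_{(\rho^-,\rho^+)}(l)=k\}$, so that the left-hand side equals $\sum_{k\ge 0}\Phi(k)\,{\rm E}_{\rho^+}^{\eta}[M_k]$, and it suffices to compute each ${\rm E}_{\rho^+}^{\eta}[M_k]$.

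By the strong Markov property at the visits to $\rho^+$, the successive trips of $Y$ between consecutive visits to $\rho^+$ are i.i.d.\ given $\eta$, and each trip is of one of three types: \textbf{(T)} an excursion into $\mathbb T^+$ that returns to $\rho^+$ without touching $\rho^-$, with probability $p_T:=\sum_i\eta(\rho^+,\rho^+ i)(1-\beta(\rho^+ i))$; \textbf{(B)} a trip that enters $\mathbb T^-\cup\{\rho^-\}$ and eventually returns to $\rho^+$, with probability $p_B:=\eta(\rho^+,\rho^-)(1-\beta(\rho^-))$; or \textbf{(I)} a trip from which $Y$ never returns to $\rho^+$, with probability $p_I=1-p_T-p_B$. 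Since a \textbf{(B)} trip traverses the directed edge $(\rho^-,\rho^+)$ exactly once---at its final step---while a \textbf{(T)} trip never does, the visits to $\rho^+$ split into phases: phase $k$ begins with the visit at which $N^Y_{(\rho^-,\rho^+)}$ first equals $k$, continues through \textbf{(T)} trips (which preserve $N=k$), and ends with the first non-\textbf{(T)} trip, which either triggers a transition to phase $k+1$ (if \textbf{(B)}) or terminates all future visits (if \textbf{(I)}).

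This phase decomposition gives ${\rm E}_{\rho^+}^{\eta}[M_k]=\pi^k/(1-p_T)$, where $\pi:=p_B/(1-p_T)$ is the phase-transition probability and $1/(1-p_T)$ is the mean number of visits per phase. To rewrite these quantities in the desired form, I apply first-step analysis to $1-\beta(\rho^+)={\rm P}_{\rho^+}^{\eta}(\tau_{\rho^-}<\infty)$: conditioning on $Y_1$ and using the strong Markov property at the first return to $\rho^+$ yields $1-\beta(\rho^+)=\eta(\rho^+,\rho^-)+p_T(1-\beta(\rho^+))$, hence $(1-p_T)(1-\beta(\rho^+))=\eta(\rho^+,\rho^-)$ and $1/(1-p_T)=(1-\beta(\rho^+))/\eta(\rho^+,\rho^-)$. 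Substituting into $\pi=p_B/(1-p_T)$ then gives $\pi=(1-\beta(\rho^+))(1-\beta(\rho^-))$, which also has the direct interpretation as the probability of a round trip $\rho^+\to\rho^-\to\rho^+$. Combining, ${\rm E}_{\rho^+}^{\eta}[M_k]=(1-\beta(\rho^+))^{k+1}(1-\beta(\rho^-))^k/\eta(\rho^+,\rho^-)$, and summing against $\Phi(k)$ produces exactly the right-hand side.

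The delicate point is the identification of $\pi$: a naive argument might equate it with $1-\beta(\rho^+)$, the probability of ever reaching $\rho^-$ from $\rho^+$, but this overlooks the fact that the walker also needs to come back to $\rho^+$ from $\rho^-$ for the directed edge $(\rho^-,\rho^+)$ to actually be traversed. A trip that reaches $\rho^-$ and then escapes into $\mathbb T^-$ (an \textbf{(I)} trip) does not contribute to $N^Y_{(\rho^-,\rho^+)}$, which is exactly where the extra factor $1-\beta(\rho^-)$ in $\pi$ comes from.
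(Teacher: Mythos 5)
Your proof is correct and takes essentially the same route as the paper's: both arguments decompose the visits to $\rho^+$ by the number $k$ of completed round trips $\rho^+\to\rho^-\to\rho^+$, compute the $k$-th round-trip probability as $[(1-\beta(\rho^+))(1-\beta(\rho^-))]^k$, and identify the expected number of visits between consecutive round trips as $(1-\beta(\rho^+))/\eta(\rho^+,\rho^-)$. The paper phrases this via stopping times $s_k,t_k$ and the conductance recursion \eqref{eq-conductance}, while you phrase it via a trips classification and a first-step expansion of $1-\beta(\rho^+)$; these are the same algebraic identity in different clothing.
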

\begin{proof}
    Recall the definition of $\beta(x)$ in \eqref{eq_def_beta} and observe that
	\begin{align*}
		{\rm E}_{\rho^+}^\eta\left[\sum_{l\geq0}\Phi(N^{Y}_{(\rho^-,\rho^+)}(l))\textbf{1}_{\{Y_l=\rho^+\}}\right]=\sum_{k\geq0}\Phi(k)\sum_{l\geq 0} {\rm P}_{\rho^+}^\eta(Y_l=\rho^+,N_{(\rho^+,\rho^-)}^{Y}(l)={k}).
	\end{align*}
	We can define the stopping times 
	\begin{align*}
		s_{k}:=\inf \{l\geq 0: N_{(\rho^+,\rho^-)}^{Y}(l)={k}\}
	\end{align*}
and $t_{k}:=\inf \{l\geq s_{k}: Y_{l}=\rho^+ \}$. We have a formula for conductance obtained by Markov property that
	\begin{equation}\label{eq-conductance}
		\frac1{\beta(x)}=1+\frac{\eta(x,x_*)}{\sum_{i=1}^{\nu(x)}\eta(x,xi)\beta(xi)}.
	\end{equation}
	Then by the Markov property at $t_{k}$, we have 
	\begin{align*}
		&\sum_{l\geq 0} {\rm P}_{\rho^+}^\eta(Y_l=\rho^+,N_{(\rho^-,\rho^+)}^{Y}(l)={k})={\rm E}_{\rho^+}^\eta\left[\textbf {1}_{\{t_{k}<\infty\}} \sum_{l=t_{k}}^{s_{{k}+1}}\textbf {1}_{\{Y_l=\rho^+\}}\right]\\
		=&{\rm P}_{\rho^+}^\eta(t_{k}<\infty){\rm E}_{\rho^+}^\eta\left[\sum_{l=0}^{s_1}\textbf {1}_{\{Y_l=\rho^+\}}\right]=(1-\beta(\rho^-))^{k}(1-\beta(\rho^+))^{k} \frac{1}{1-{\rm P}_{\rho^+}^\eta(\tau_{\rho^+}<s_1)}\\
		=&(1-\beta(\rho^-))^{k}(1-\beta(\rho^+))^{k}\frac{1}{\eta(\rho^+,\rho^-)+\sum_{i=1}^{\nu(\rho^+)}\eta(\rho^+,\rho^+i)\beta(\rho^+i)}\\
  =&(1-\beta(\rho^-))^{k}(1-\beta(\rho^+))^{k}\frac{1-\beta(\rho^+)}{\eta(\rho^+,\rho^-)},
	\end{align*}
	where the last equality comes from the formula (\ref{eq-conductance}). The proof is complete.
\end{proof}

In order to describe the limit distribution, we construct the law for two dependent ERRW on a double Galton-Watson tree. We first sample a double Galton-Watson tree $\mathbb T^-\rightleftharpoons\mathbb T^+$ with weight $\alpha$ defined in \eqref{eq_weightofDT}. We let $(X_n)_{n\ge 0}$ (resp $(Y_n)_{n\ge 0}$) be an ERRW with law $\mathbf P^{\mathbb T^-\rightleftharpoons\mathbb T^+}(\cdot|\alpha)$ (resp. $\mathbf P^{\mathbb T^-\rightleftharpoons\mathbb T^+}(\cdot|\alpha+N^X)$) starting from $\rho^-$ (resp. $\rho^+$). In other words, we first run an ERRW $(X_n)_{n\ge 0}$, add the local time of $(X_n)_{n\ge 0}$ to the initial weight $\alpha$ and run another ERRW $(Y_n)_{n\ge 0}$. We call the probability measure constructed above $\mathbb P_{(\rho^-,\rho^+)}(\cdot)  $ and denote by $\mathbb E_{(\rho^-,\rho^+)}[\cdot] $ the corresponding expectation. 

Recall that the law of an ERRW can be viewed as the annealed law of a RWDE. From Lemma \ref{lemma_path_sym}, we can view $\mathbb P_{(\rho_*,\rho)}[\cdot]$ as the annealed law of two independent RWDE on a double Galton-Watson tree. Recall the definition of $\mathscr C$ in \eqref{eq_assump_finite}. By independence of $\mathbb{T}^+$ and $\mathbb{T}^-$ in a double tree, 
\begin{align*}\label{eq_assump_finite}
			\mathscr{C}&=\mathbb{E}_{(\rho^-,\rho^+)}\left[\frac{\beta(\rho^-)(1-\beta(\rho^+))}{\eta(\rho^+,\rho^-)}\sum_{k\geq0}\Phi(k)(1-\beta(\rho^+))^k(1-\beta(\rho^-))^k\right].
	\end{align*}
With Lemma \ref{Lm_exp_bais}, 
 \begin{equation}\label{eq_C_expression}
     \begin{split}
             \mathscr{C}
     =&\mathbb{E}_{(\rho^-,\rho^+)}\left[ \sum_{l\ge 0}\Phi(N_{(\rho^-,\rho^+)}^{Y}(l))\textbf{1}_{\{\tau^X_{\rho^+}=\infty\}}\textbf{1}_{\{Y_{l}=\rho^+\}}\right].
     \end{split}
 \end{equation}

We now describe the state space of the environment seen from an ERRW. We define $(\Omega_{ER},\mathcal{F}_{ER})$  as
\begin{equation*}
		\Omega_{ER}:=\{(T^-\rightleftharpoons T^+,\gamma):\gamma\in \Omega_{\gamma}\},\,\mathcal{F}_{ER}:=\sigma_{finite}((T^-\rightleftharpoons T^+,\gamma)),
	\end{equation*}
where $\sigma_{finite}$ is the sigma field generated by information of finite subtrees and finite subpaths. The measure $\mu_{ER}$ given in \eqref{eq_def_muER} is a measure on $(\Omega_{ER},\mathcal{F}_{ER})$. 

\begin{theorem}\label{prop_asy_env}
Suppose that $m\in (1,\infty)$, and  \eqref{eq_transience}, \eqref{eq_assump_finite} hold. Under $\mathbb P(\cdot|\mathcal{S}) $, the random variable $(\Psi_{X_n}(\mathbb T_*^{< X_n})\rightleftharpoons \mathbb T_{X_n},(\eta_e)_{\underline e\in \mathbb T_{X_n}},\Psi_{X_n}((X_l)_{0\le l\le n}))$ converges in distribution as $n\rightarrow \infty$. More precisely, for any bounded functional $\mathbf{F}$,
{\rm
\begin{equation}\label{eq_Thm_asy_1}
    \begin{split}
&\lim_{n\rightarrow\infty} \mathbb E_{\rho}\left[ \mathbf F(\Psi_{X_n}(\mathbb T_*^{< X_n})\rightleftharpoons \mathbb T_{X_n},(\eta_e)_{\underline e\in \mathbb T_{X_n}},\Psi_{X_n}((X_l)_{0\le l\le n}) )|\mathcal{S}\right] =\mathscr C^{-1}\times \\
   &\mathbb E_{(\rho_-,\rho^+)}\left[\sum_{t\ge 0}\mathbf F(\mathbb T^-\rightleftharpoons\mathbb T^+, (\eta_e)_{\underline e\in \mathbb T^+},Rev((X_l)_{l\ge 0})\ast(Y_l)_{0\leq l\leq t} ) \Phi(N_{(\rho^-,\rho^+)}^{Y}(t))\textbf{1}_{\{\tau^X_{\rho^+}=\infty\}}\textbf{1}_{\{Y_{t}=\rho^+\}}\right], 
    \end{split}
\end{equation}
}
where $\mathscr{C}\in (0,\infty)$ is the renormalising constant defined in \eqref{eq_assump_finite} and it also equals
{\rm
\begin{align}\label{eq_Thm_asy_2}
    \mathbb E_{\rho}[\Theta_1\textbf{1}_{\{\tau_{\rho_*}=\infty\}}]\mathbb{E}_{\rho}\left[\beta(\rho)\right].
\end{align}
}
\end{theorem}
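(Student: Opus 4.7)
The strategy follows \cite{aidekon2014speed}: combine renewal theory for the environment Markov chain $(\omega_n)_{n\ge 0}$ (where $\omega_n$ denotes the triple $(\Psi_{X_n}(\mathbb T_*^{<X_n})\rightleftharpoons\mathbb T_{X_n},(\eta_e)_{\underline e\in\mathbb T_{X_n}},\Psi_{X_n}((X_l)_{0\le l\le n}))$ appearing on the left-hand side of \eqref{eq_Thm_asy_1}) with the path-reversal machinery of Section \ref{S_Rev}. Proposition \ref{prop_infiregiid} supplies i.i.d.\ regeneration blocks $(\Theta_{k+1}-\Theta_k)_{k\ge 1}$, and \eqref{eq_assump_finite} will be seen to be equivalent to a finite mean block length. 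Together with an aperiodicity check, this yields convergence of $\omega_n$ in distribution to a unique invariant probability $\nu$.

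To identify $\nu$ with $\mu_{ER}/\mathscr C$, fix a bounded local functional $\mathbf F$ and decompose the path by the first-visit time of $X_n$: set $\theta:=\inf\{l\ge 0:X_l=X_n\}$ and $t:=n-\theta$, and write $\gamma=\gamma_1\ast\gamma_2$ where $\gamma_1:\rho\to X_n$ is the first-visit segment and $\gamma_2$ is a length-$t$ loop at $X_n$. Lemma \ref{rev_XY} (together with Remark \ref{RM_fixenv}, freezing the environment on $\mathbb T_{X_n}$) rewrites
\begin{equation*}
\mathbf P(\gamma_1\ast\gamma_2)=\Phi\bigl(N_{(x_*,x)}(\gamma_2)\bigr)\,\mathbf P^{\Psi_x(\mathbb T_*)}\bigl(\Psi_x(\check\gamma_1)|_{\ge 1}\bigr)\,\mathbf P^{\Psi_x(\mathbb T_*)}\bigl(\Psi_x(\gamma_2)\mid \alpha+N\bigr).
\end{equation*}
By Lemma \ref{lemma_backwardpsi}, $\Psi_x(\mathbb T_*)$ has the law of a Galton--Watson double tree $\mathbb T^-\rightleftharpoons\mathbb T^+$. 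Holding $t$ fixed and letting $n\to\infty$, the reversed segment $\Psi_x(\check\gamma_1)|_{\ge 1}$ (of length $n-t-1\to\infty$) converges in law to an infinite transient RWDE $X$ from $\rho^-$ on the event $\tau^X_{\rho^+}=\infty$, while $\Psi_x(\gamma_2)$ becomes a walk $Y$ from $\rho^+$, conditionally independent of $X$ given the environment, with $Y_t=\rho^+$. Summing over $x$ and $t\ge 0$ and comparing with \eqref{eq_def_muER} yields the RHS of \eqref{eq_Thm_asy_1}. Contributions where $\gamma_2$ reaches $\rho_*$, or where $\theta$ stays bounded, vanish by transience, and the interchange of $\lim_n$ with the summations is justified by dominated convergence using $\mathscr C<\infty$.

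For \eqref{eq_Thm_asy_2}, I would apply Kac's formula to $(\omega_n)$: the invariant probability $\nu$ satisfies
\begin{equation*}
\nu(\mathbf F)=\mathbb E_\rho\Bigl[\sum\nolimits_{t=\Theta_1}^{\Theta_2-1}\mathbf F(\omega_t)\,\Big|\,\mathcal S\Bigr]\Big/\mathbb E_\rho[\Theta_2-\Theta_1\mid\mathcal S].
\end{equation*}
The i.i.d.\ property of regeneration blocks (Proposition \ref{prop_infiregiid}), combined with the observation that on $\mathcal S$ the post-$\Theta_1$ walk has the law of a fresh $(\alpha_p,\alpha_c)$-ERRW conditioned on $\tau_{\rho_*}=\infty$, gives $\mathbb E_\rho[\Theta_2-\Theta_1\mid\mathcal S]=\mathbb E_\rho[\Theta_1\mathbf 1_{\{\tau_{\rho_*}=\infty\}}]/\mathbb E_\rho[\beta(\rho)]$. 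Comparing the concrete expression \eqref{eq_def_muER} for $\mu_{ER}$, which carries an annealed factor $\mathbb E_\rho[\beta(\rho)]=\mathbb P_\rho(\tau_{\rho_*}=\infty)$ through the indicator $\mathbf 1_{\{\tau^X_{\rho^+}=\infty\}}$, with $\nu=\mu_{ER}/\mathscr C$ then identifies $\mathscr C=\mathbb E_\rho[\Theta_1\mathbf 1_{\{\tau_{\rho_*}=\infty\}}]\,\mathbb E_\rho[\beta(\rho)]$. The main technical obstacle will be the passage to the limit $n\to\infty$ in the path decomposition, namely justifying the interchange of $\lim_n$ with the infinite sums over $x,t,\gamma_1,\gamma_2$, controlling the error terms (bounded-$\theta$ paths, paths with $\gamma_2$ hitting $\rho_*$) uniformly in $n$, and handling the weight shifts in the annealed measure produced by Lemma \ref{rev_XY}.
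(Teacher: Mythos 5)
Your proposal captures the core ingredients the paper uses: the fresh-time decomposition of the path at $X_n$, path reversal via Lemma \ref{rev_XY} and Remark \ref{RM_fixenv}, the identification of the backward tree via Lemma \ref{lemma_backwardpsi}, regeneration structure from Proposition \ref{prop_infiregiid}, and dominated convergence under $\mathscr C<\infty$. The main difference is in packaging. You propose to first establish convergence of $(\omega_n)$ abstractly via regenerative-process theory (finite-mean, aperiodic cycles) and then identify the limit by Kac's formula; the paper instead performs a single direct computation, starting from $\mathbb E_\rho[\mathbf F(\omega_n)\mathbf 1_{\{\tau_{\rho_*}>n\}}]$, splitting over fresh epochs $\theta_k$, replacing fresh by regeneration epochs via the indicator $\mathbf 1_{\{X_s\ne (X_{n-t})_*,\,s\ge n-t\}}$ (which is what produces the $\mathbb E_\rho[\beta(\rho)]$ factor), and invoking the scalar renewal theorem for $b_i=\mathbb P_\rho(i\in\{\Theta_k\}\mid\tau_{\rho_*}=\infty)$, finally extracting $\eqref{eq_Thm_asy_2}$ by plugging $\mathbf F\equiv 1$. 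The direct route is preferable here for two reasons. First, $\omega_n$ records the entire past path, so $(\omega_n)$ is not a regenerative process on a fixed state space; the paper circumvents this by restricting to local functionals, trimming to $|X_n|>M$, and computing directly rather than invoking abstract convergence of regenerative processes. Second, there is a subtle logical issue in your outline: you take for granted that $\mathscr C<\infty$ is ``seen to be equivalent'' to $\mathbb E_\rho[\Theta_1\mid\tau_{\rho_*}=\infty]<\infty$ before using the latter, but that equivalence is precisely Theorem \ref{positive speed Thm 2} and is proved in the paper \emph{after} (and with the help of) the direct computation behind Theorem \ref{prop_asy_env}; the paper's proof explicitly avoids assuming finite mean cycle time, while your regenerative-theory route requires it a priori. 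Your plan can be made to work, but you would need to first establish the implication ``$\mathscr C<\infty\Rightarrow$ finite mean cycle'' by essentially the same path-reversal estimate, at which point the direct argument is already in hand.
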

By averaging over the environment $\eta$, we deduce the following corollary, which implies that $\mathscr C^{-1}\mu_{ER}$ is the limiting  measure for the environment seen from an ERRW on a Galton-Watson tree. It will be proved in a subsequent paper that $\mu_{ER}$ is an invariant measure even when \eqref{eq_assump_finite} fails to hold.

\begin{corollary}\label{Cor_inv_meas}
    Suppose that $m\in (1,\infty)$, and  \eqref{eq_transience}, \eqref{eq_assump_finite} hold. Under $\mathbb P(\cdot|\mathcal{S}) $, the random variable $(\Psi_{X_n}(\mathbb T_*^{< X_n})\rightleftharpoons \mathbb T_{X_n},\Psi_{X_n}((X_l)_{0\le l\le n}))$ converges in distribution as $n\rightarrow \infty$ to $\mathscr{C}^{-1}\mu_{ER}$. 
\end{corollary}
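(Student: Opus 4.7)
The plan is to derive this corollary directly from Theorem \ref{prop_asy_env} by restricting attention to test functionals that do not depend on the environment coordinate. Given a bounded measurable $\mathbf G$ on $(\Omega_{ER},\mathcal F_{ER})$, I would extend it to the larger space carrying $(\eta_e)_{\underline e\in \mathbb T_{X_n}}$ by letting it be constant in $\eta$, and plug the result into \eqref{eq_Thm_asy_1}. The left-hand side then gives the expectation of $\mathbf G$ at the reduced random variable $(\Psi_{X_n}(\mathbb T_*^{<X_n})\rightleftharpoons \mathbb T_{X_n},\Psi_{X_n}((X_l)_{0\le l\le n}))$ under $\mathbb P(\cdot|\mathcal S)$, and the right-hand side reduces to the integral of $\mathbf G$ against a sub-probability measure on $\Omega_{ER}$ that must be identified with $\mathscr C^{-1}\mu_{ER}$.

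For this identification, recall that $\mathbb P_{(\rho^-,\rho^+)}$ was defined by first sampling a double Galton--Watson tree, then running an ERRW $X$ from $\rho^-$, and finally running an ERRW $Y$ from $\rho^+$ with updated weights $\alpha+N^X$. By Lemma \ref{lemma_path_sym}, this two-stage annealed construction has the same joint law of $(X,Y)$ as the fully annealed picture in which one samples a Dirichlet environment on the double tree and then runs $X$ and $Y$ as conditionally independent RWDE's started from $\rho^-$ and $\rho^+$ respectively. The latter is precisely the recipe underlying the definition of $\mu_{ER}^{(k)}$ in \eqref{eq_def_muER}. Consequently, for environment-independent $\mathbf G$, the expectation
\[
\mathbb E_{(\rho^-,\rho^+)}\left[\sum_{t\ge 0}\Phi(N^Y_{(\rho^-,\rho^+)}(t))\mathbf 1_{\{\tau^X_{\rho^+}=\infty,\,Y_t=\rho^+\}}\mathbf G(\mathbb T^-\rightleftharpoons\mathbb T^+,Rev((X_l))\ast(Y_l)_{0\le l\le t})\right]
\]
rewrites term by term as $\mu_{ER}(\mathbf G)$, the $t=0$ contribution being handled by $\Phi(0)=1$ and the boundary convention in \eqref{eq_def_muER}.

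Multiplying by the finite positive renormalising constant $\mathscr C^{-1}$ from Theorem \ref{prop_asy_env} then yields $\mathscr C^{-1}\mu_{ER}(\mathbf G)$, giving the claimed convergence in distribution. The only genuinely non-routine step is the path-level identification in the previous paragraph: one must verify via Lemma \ref{lemma_path_sym} that reinforcing the weights between the runs of $X$ and $Y$ has the same effect on the joint law of the pair of paths as sharing a common Dirichlet environment, which is exactly the two-path symmetry encoded by that lemma. Everything else is bookkeeping and a Fubini-type interchange in the sum over $t$.
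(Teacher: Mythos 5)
Your proof is correct and is exactly the paper's own (one-line) argument: the corollary is obtained from Theorem \ref{prop_asy_env} by taking test functionals $\mathbf F$ that do not depend on the environment coordinate $(\eta_e)$, and then observing via Lemma \ref{lemma_path_sym} that the two-stage ERRW law $\mathbb P_{(\rho^-,\rho^+)}$ coincides with the annealed law of two conditionally independent RWDEs on the double tree, which is precisely the construction underlying $\mu_{ER}^{(k)}$ and hence $\mu_{ER}$. Your aside about the $t=0$ term and the index starting at $k=1$ in \eqref{eq_def_muER} correctly flags what appears to be a small index typo in the paper's definition; $\Phi(0)=1$ and $Y_0=\rho^+$ make that term nontrivial, and it is included in the theorem's sum over $t\ge 0$.
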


\begin{proof}[Proof of Theorem \ref{prop_asy_env}]
Let $\mathbf F$ be a bounded measurable function on the space of marked trees. We suppose that there exists a constant $M>0$ such that $\mathbf F$ only depends on the subtree $\{u\in \mathbb T^-\rightleftharpoons\mathbb T^+,|u|\le M \}$ (and the part of path on it), where $|u|$ is the graph distance between $u$ and $\rho^+$. We need to show that 
\begin{align*}
    &\lim_{n\rightarrow\infty} \mathbb E_{\rho}\left[ \mathbf F(\Psi_{X_n}(\mathbb T_*^{< X_n})\rightleftharpoons \mathbb T_{X_n},(\eta_e)_{\underline e\in \mathbb T_{X_n}},\Psi_{X_n}((X_l)_{0\le l\le n}) )\textbf{1}_{\mathcal{S}} \right] =\frac{\mathbb P(\mathcal{S}) }{\mathbb E_{\rho}[\Theta_1\textbf{1}_{\{\tau_{\rho_*}=\infty\}}]\mathbb{E}_{\rho}\left[\beta(\rho)\right]}\times\\
    &\mathbb E_{(\rho_-,\rho^+)}\left[\sum_{t\ge 0}\mathbf F(\mathbb T^-\rightleftharpoons\mathbb T^+, (\eta_e)_{\underline e\in \mathbb T^+},Rev((X_l)_{l\ge 0})\ast(Y_l)_{0\leq l\leq t} ) \Phi(N_{(\rho^-,\rho^+)}^{Y}(t))\textbf{1}_{\{\tau^X_{\rho^+}=\infty\}}\textbf{1}_{\{Y_{t}=\rho^+\}}\right],
\end{align*}
to prove \eqref{eq_Thm_asy_1} and \eqref{eq_Thm_asy_2}.

 By the argument in \cite[Section 4.3]{aidekon2014speed}, it suffices to prove
\begin{equation}\label{eq_prop_asy_env_1}
    \begin{split}
           &\lim_{n\rightarrow\infty} \mathbb E_{\rho}\left[\mathbf F(\Psi_{X_n}(\mathbb T_*^{< X_n})\rightleftharpoons \mathbb T_{X_n},(\eta_e)_{\underline e\in \mathbb T_{X_n}},\Psi_{X_n}((X_l)_{0\le l\le n}) )\textbf{1}_{\{\tau_{\rho_*}>n\}} \right] =\frac{1}{\mathbb E_{\rho}[\Theta_1\textbf{1}_{\{\tau_{\rho_*}=\infty\}}]}\times\\
    &\mathbb E_{(\rho_-,\rho^+)}\left[\sum_{t\ge 0}\mathbf F(\mathbb T^-\rightleftharpoons\mathbb T^+, (\eta_e)_{\underline e\in \mathbb T^+},Rev((X_l)_{l\ge 0})\ast(Y_l)_{0\leq l\leq t} ) \Phi(N_{(\rho^-,\rho^+)}^{Y}(t))\textbf{1}_{\{\tau^X_{\rho^+}=\infty\}}\textbf{1}_{\{Y_{t}=\rho^+\}}\right].
    \end{split}
\end{equation}
 For a random tree $\mathbb T$,  we let $\mathcal{S}_{\mathbb T}$ be the event that $\mathbb T$ is infinite. By dominated convergence we have
\begin{align*}
 &\mathbb E_{\rho}\left[\mathbf F(\Psi_{X_n}(\mathbb T_*^{< X_n})\rightleftharpoons \mathbb T_{X_n},(\eta_e)_{\underline e\in \mathbb T_{X_n}},\Psi_{X_n}((X_l)_{0\le l\le n}))\textbf{1}_{\{\tau_{\rho_*}>n\}} \right]\\
 =& \mathbb E_{\rho}\left[\mathbf F(\Psi_{X_n}(\mathbb T_*^{< X_n})\rightleftharpoons \mathbb T_{X_n},(\eta_e)_{\underline e\in \mathbb T_{X_n}},\Psi_{X_n}((X_l)_{0\le l\le n}))\textbf{1}_{\{\tau_{\rho_*}>n,|X_n|\ge M\}} \textbf{1}_{\mathcal{S}_{{\mathbb{T}}_*^{\leq X_{n}}}} \right]+o_n(1).
\end{align*}
Recall that $\theta_k$ is the $k$-th fresh epoch and $\xi_k=X_{\theta_k}$. We see for $n\ge 0$,
\begin{equation}\label{eq_fresh_split}
    \begin{split}
        	&\mathbb{E}_{\rho}\left[\mathbf F(\Psi_{X_n}(\mathbb T_*^{< X_n})\rightleftharpoons \mathbb T_{X_n},(\eta_e)_{\underline e\in \mathbb T_{X_n}},\Psi_{X_n}((X_l)_{0\le l\le n}))\textbf{1}_{\{\tau_{\rho_*}>n,|X_{n}|>M\}}\textbf{1}_{\mathcal{S}_{\mathbb T_*^{\leq X_{n}}}}\right]\\
	=&\sum_{k=0}^{\infty}\mathbb{E}_{\rho}\left[\mathbf F(\Psi_{\xi_k}(\mathbb T_*^{< \xi_k})\rightleftharpoons \mathbb T_{\xi_k},(\eta_e)_{\underline e\in \mathbb T_{\xi_k}},\Psi_{\xi_k}((X_l)_{0\le l\le n}))\textbf{1}_{\{\tau_{\rho_*}>n,|\xi_k|>M\}}\textbf{1}_{\{X_n=\xi_k\}}\textbf{1}_{\mathcal{S}_{\mathbb T_*^{\leq\xi_k}}}\right].
    \end{split}
\end{equation}
By Lemma \ref{lemma_backwardpsi} and Lemma \ref{rev_XY}, we have for each $k\ge 0$
\begin{equation}\label{eq_reverse_in_proof_asy_env}
			\begin{split}
				&\mathbb{E}_{\rho}\left[\mathbf F(\Psi_{\xi_k}(\mathbb T_*^{< \xi_k})\rightleftharpoons \mathbb T_{\xi_k},(\eta_e)_{\underline e\in \mathbb T_{\xi_k}},\Psi_{\xi_k}((X_l)_{0\le l\le n})) \textbf{1}_{\{\tau_{\rho_*}>n,|\xi_k|>M\}}\textbf{1}_{\{X_n=\xi_k\}}\textbf{1}_{\mathcal{S}_{\mathbb T_*^{\leq \xi_k}}}\right]\\
				=&\mathbb{E}_{(\rho^-,\rho^+)}\bigg[\mathbf F({(\mathbb{T}}^-)^{\leq \xi_k}\rightleftharpoons\mathbb T^+,(\eta_e)_{\underline e\in\mathbb T^+},Rev((X_l)_{0\le l\le \theta_k })\ast(Y_l)_{0\leq l\leq n-\theta_k})\\
    &\Phi(N_{(\rho^-,\rho^+)}^{Y}(n-\theta_k))\textbf{1}_{\{\tau_{\rho^+}^X>\theta_k,|\xi_k|>M\}}
    \textbf{1}_{\{\tau_{\xi_k}^{ Y}>n-\theta_k, Y_{n-\theta_k}=\rho^+\}}\textbf{1}_{\mathcal{S}_{({\mathbb{T}^-)}_*^{\leq \xi_k}}}\bigg].\\
			\end{split}
		\end{equation}
Here we reverse the tree and path conditioned on $(\eta_e)_{\underline e\in \mathbb T_{\xi_k}}$ by Remark \ref{RM_fixenv}. 
  
Reasoning on the value of $n-\theta_k$ in (\ref{eq_reverse_in_proof_asy_env}), we see that 
\begin{equation*}
\begin{split}
     &\sum_{k=0}^{\infty}\Phi(N_{(\rho^-,\rho^+)}^{Y}(n-\theta_k))\textbf{1}_{\{\tau_{\rho^+}^X>\theta_k,|\xi_k|>M\}}\textbf{1}_{\{\tau_{\xi_k}^{ Y}>n-\theta_k, Y_{n-\theta_k}=\rho^+\}}\textbf{1}_{\mathcal{S}_{{(\mathbb{T}}_*^-)^{\leq \xi_k}}}\\
=&\sum_{t=0}^{n}\Phi(N_{(\rho^-,\rho^+)}^{Y}(t))\textbf{1}_{\{\tau_{\rho^+}^X>n-t,|X_{n-t}|>M,n-t\in \{\theta_k,k\ge 0\}\}}\textbf{1}_{\{\tau_{X_{n-t}}^{Y}>t, Y_{t}=\rho^+\}}\textbf{1}_{\mathcal{S}_{{(\mathbb{T}}_*^-)^{\leq X_{n-t}}}}.
\end{split}
\end{equation*}
From the definition of the regeneration epoch, 
\begin{equation}\label{eq_X_independent}
    \begin{split}
           &\textbf{1}_{\{\tau_{\rho^+}^X>n-t,|X_{n-t}|>M,n-t\in \{\Theta_k,k\ge 0\}\}}\textbf{1}_{\{\tau_{X_{n-t}}^{Y}>t, Y_{t}=\rho^+\}}\textbf{1}_{\mathcal{S}_{{(\mathbb{T}}_*^-)^{\leq X_{n-t}}}}\\
   =&\textbf{1}_{\{\tau_{\rho^+}^X>n-t,|X_{n-t}|>M,n-t\in \{\theta_k,k\ge 0\}\}}\textbf{1}_{\{X_s\neq (X_{n-t})_*,s\ge n-t  \}}\textbf{1}_{\{\tau_{X_{n-t}}^{Y}>t, Y_{t}=\rho^+\}}\textbf{1}_{\mathcal{S}_{{(\mathbb{T}}_*^-)^{\leq X_{n-t}}}}.\\
    \end{split}
\end{equation}
We apply the Markov property at time $n-t$ for $(X_n)_{n\ge 0}$ considered as a RWDE and the branching property of a Galton-Watson tree at $X_{n-t}$. The term $\textbf{1}_{\{X_s\neq (X_{n-t})_*,s\ge n-t  \}}$ is independent of the other terms (it is independent of $\mathbf F$ since $|X_{n-t}|>M$), with expectation $\mathbb E_{\rho}[\beta(\rho)]$. Thus 
\begin{equation}\label{eq_reverse_times_beta}
\begin{split}
& \mathbb E_{(\rho^-,\rho^+)}\left[\mathbf F(\cdot)\Phi(N_{(\rho^-,\rho^+)}^{Y}(t))\textbf{1}_{\{\tau_{\rho^+}^X>n-t,|X_{n-t}|>M,n-t\in \{\Theta_k,k\ge 0\}\}}\textbf{1}_{\{\tau_{X_{n-t}}^{Y}>t, Y_{t}=\rho^+\}}\textbf{1}_{\mathcal{S}_{{(\mathbb{T}}_*^-)^{\leq X_{n-t}}}}\right]\\
=& \mathbb E_{(\rho^-,\rho^+)}\left[\mathbf F(\cdot)\Phi(N_{(\rho^-,\rho^+)}^{Y}(t))\textbf{1}_{\{\tau_{\rho^+}^X>n-t,|X_{n-t}|>M,n-t\in \{\theta_k,k\ge 0\}\}}\textbf{1}_{\{\tau_{X_{n-t}}^{Y}>t, Y_{t}=\rho^+\}}\textbf{1}_{\mathcal{S}_{{(\mathbb{T}}_*^-)^{\leq X_{n-t}}}} \right]\times\\ &\mathbb E_{\rho}[\beta(\rho)].
\end{split}
\end{equation}
We also obtain from \eqref{eq_C_expression} and our assumption \eqref{eq_assump_finite} that
\begin{equation}\label{eq_domin}
    \begin{split}
          & \mathbb E_{(\rho^-,\rho^+)}\left[\sum_{t=0}^{n}\Phi(N_{(\rho^-,\rho^+)}^{Y}(t))\textbf{1}_{\{\tau_{\rho^+}^X>n-t,|X_{n-t}|>M,n-t\in \{\Theta_k,k\ge 0\}\}}\textbf{1}_{\{\tau_{X_{n-t}}^{Y}>t, Y_{t}=\rho^+\}}\textbf{1}_{\mathcal{S}_{{\mathbb{T}}_*^-}}\right]\\
    \le &\mathbb E_{(\rho^-,\rho^+)}\left[\sum_{t= 0}^{\infty}\Phi(N_{(\rho^-,\rho^+)}^{Y}(t))\textbf{1}_{\{\tau_{\rho^+}^X=\infty\}}\textbf{1}_{\{Y_{t}=\rho^+\}}\right]=\mathscr C<\infty
    \end{split}
\end{equation}
By \eqref{eq_domin}, with the dominating function
 \begin{align*}
     ||\mathbf F||_{\infty}\sum_{t= 0}^{\infty}\Phi(N_{(\rho^-,\rho^+)}^{Y}(t))\textbf{1}_{\{\tau^X_{\rho^+}=\infty\}}\textbf{1}_{\{Y_{t}=\rho^+\}},
 \end{align*}
we can apply dominated convergence to see that
\begin{align*}
     &\sum_{t=0}^{n}\mathbb E_{(\rho^-,\rho^+)}\Bigg[\mathbf F({(\mathbb{T}}^-)^{\leq X_{n-t}}\rightleftharpoons\mathbb T^+,(\eta_e)_{\underline e\in\mathbb T^+},Rev((X_l)_{0\le l\le n-t })\ast(Y_l)_{0\leq l\leq t})\\
     &\Phi(N_{(\rho^-,\rho^+)}^{Y}(t))\textbf{1}_{\{\tau_{\rho^+}^X>n-t,|X_{n-t}|>M,n-t\in \{\Theta_k,k\ge 0\}\}}\textbf{1}_{\{\tau_{X_{n-t}}^{Y}>t, Y_{t}=\rho^+\}}\textbf{1}_{\mathcal{S}_{{(\mathbb{T}}_*^-)^{\leq X_{n-t}}}}\Bigg]\\
     =&\mathbb E_{(\rho^-,\rho^+)}\Bigg[\sum_{t=0}^{\infty}\mathbf F({\mathbb{T}}^-\rightleftharpoons\mathbb T^+,(\eta_e)_{\underline e\in\mathbb T^+},Rev((X_l)_{0\le l\le \infty })\ast(Y_l)_{0\leq l\leq t})\\
     &\Phi(N_{(\rho^-,\rho^+)}^{Y}(t))\textbf{1}_{\{\tau_{\rho^+}^X=\infty,n-t\in \{\Theta_k,k\ge 0\}\}}\textbf{1}_{\{\tau_{X_{n-t}}^{Y}>t, Y_{t}=\rho^+\}}\textbf{1}_{\mathcal{S}_{{\mathbb{T}}_*^-}}\Bigg]+o_n(1).
\end{align*}
Here we replace $\mathbf F({(\mathbb{T}}^-)^{\leq X_{n-t}}\rightleftharpoons\mathbb T^+,\cdot,\cdot)$ by $\mathbf F({\mathbb{T}}^-\rightleftharpoons\mathbb T^+,\cdot,\cdot)$, since $\mathbf F$ only relies on finite subtrees. Note that we can replace the quantity above by
\begin{equation*}
\begin{split}
    &\mathbb E_{(\rho^-,\rho^+)}\Bigg[\sum_{t=0}^{\infty}\mathbf F({\mathbb{T}}^-\rightleftharpoons\mathbb T^+,(\eta_e)_{\underline e\in\mathbb T^+},Rev((X_l)_{0\le l\le \infty })\ast(Y_l)_{0\leq l\leq t})\\
     &\Phi(N_{(\rho^-,\rho^+)}^{Y}(t))\textbf{1}_{\{\tau_{\rho^+}^X=\infty,n-t\in \{\Theta_k,k\ge t+M\}\}}\textbf{1}_{\{\tau_{X_{n-t}}^{Y}>t, Y_{t}=\rho^+\}}\textbf{1}_{\mathcal{S}_{{\mathbb{T}}_*^-}}\Bigg]+o_n(1)
\end{split}
\end{equation*}
by dominated convergence. Define $b_i:={\mathbb P}_{\rho}(i\in \{\Theta_k,k\ge0\}|\tau_{\rho_*}=\infty)$. According to the renewal theorem,
\begin{equation}\label{eq_conv_b_i}
    \lim_{i\to\infty}b_i=\frac{1}{\mathbb E_{\rho}[\Theta_1|\tau_{\rho_*}=\infty]}
\end{equation}
since $(\Theta_{k+1}-\Theta_{k})_{k\ge0}$ is an i.i.d. sequence when conditioned on $\{\tau_{\rho_*}=\infty\}$. Thus we have
\begin{equation*}
\begin{split}
    &\mathbb E_{(\rho^-,\rho^+)}\Bigg[\sum_{t=0}^{\infty}\mathbf F({\mathbb{T}}^-\rightleftharpoons\mathbb T^+,(\eta_e)_{\underline e\in\mathbb T^+},Rev((X_l)_{0\le l\le \infty })\ast(Y_l)_{0\leq l\leq t})\\
     &\Phi(N_{(\rho^-,\rho^+)}^{Y}(t))\textbf{1}_{\{\tau_{\rho^+}^X=\infty,n-t\in \{\Theta_k,k\ge t+M\}\}}\textbf{1}_{\{\tau_{X_{n-t}}^{Y}>t, Y_{t}=\rho^+\}}\textbf{1}_{\mathcal{S}_{{\mathbb{T}}_*^-}}\Bigg]\\
     =&\mathbb E_{(\rho^-,\rho^+)}\Bigg[\sum_{t=0}^{\infty}\mathbf F({\mathbb{T}}^-\rightleftharpoons\mathbb T^+,(\eta_e)_{\underline e\in\mathbb T^+},Rev((X_l)_{0\le l\le \infty })\ast(Y_l)_{0\leq l\leq t})\\
     &\Phi(N_{(\rho^-,\rho^+)}^{Y}(t))\textbf{1}_{\{\tau_{\rho^+}^X=\infty,n-t\ge\Theta_{t+M}\}}b_{n-t-\Theta_{t+M}}\textbf{1}_{\{\tau_{X_{n-t}}^{Y}>t, Y_{t}=\rho^+\}}\textbf{1}_{\mathcal{S}_{{\mathbb{T}}_*^-}}\Bigg]\\
     =&\frac{1}{\mathbb E_{\rho}[\Theta_1|\tau_{\rho_*}=\infty]}\mathbb E_{(\rho^-,\rho^+)}\Bigg[\sum_{t=0}^{\infty}\mathbf F({\mathbb{T}}^-\rightleftharpoons\mathbb T^+,(\eta_e)_{\underline e\in\mathbb T^+},Rev((X_l)_{0\le l\le \infty })\ast(Y_l)_{0\leq l\leq t})\\
     &\Phi(N_{(\rho^-,\rho^+)}^{Y}(t))\textbf{1}_{\{\tau_{\rho^+}^X=\infty\}}\textbf{1}_{\{Y_{t}=\rho^+\}}\textbf{1}_{\mathcal{S}_{{\mathbb{T}}_*^-}}\Bigg]+o_n(1),
\end{split}
\end{equation*}
where we use regeneration structure, branching property for the first equality, and \eqref{eq_conv_b_i} for the second. With \eqref{eq_reverse_times_beta}, we conclude that
\begin{align*}
    &\sum_{t=0}^{n}\mathbb E_{(\rho^-,\rho^+)}\Bigg[\mathbf F({(\mathbb{T}}^-)^{\leq X_{n-t}}\rightleftharpoons\mathbb T^+,(\eta_e)_{\underline e\in\mathbb T^+},Rev((X_l)_{0\le l\le n-t })\ast(Y_l)_{0\leq l\leq t})\\
     &\Phi(N_{(\rho^-,\rho^+)}^{Y}(t))\textbf{1}_{\{\tau_{\rho^+}^X>n-t,|X_{n-t}|>M,n-t\in \{\theta_k,k\ge 0\}\}}\textbf{1}_{\{\tau_{X_{n-t}}^{Y}>t, Y_{t}=\rho^+\}}\textbf{1}_{\mathcal{S}_{{(\mathbb{T}}_*^-)^{\leq X_{n-t}}}}\Bigg]\\
       =&\frac{1}{\mathbb E_{\rho}[\Theta_1\textbf{1}_{\{\tau_{\rho_*}=\infty\}}]}\mathbb E_{(\rho^-,\rho^+)}\Bigg[\sum_{t=0}^{\infty}\mathbf F({\mathbb{T}}^-\rightleftharpoons\mathbb T^+,(\eta_e)_{\underline e\in\mathbb T^+},Rev((X_l)_{0\le l\le \infty })\ast(Y_l)_{0\leq l\leq t})\\
     &\Phi(N_{(\rho^-,\rho^+)}^{Y}(t))\textbf{1}_{\{\tau_{\rho^+}^X=\infty\}}\textbf{1}_{\{Y_{t}=\rho^+\}}\textbf{1}_{\mathcal{S}_{{\mathbb{T}}_*^-}}\Bigg]+o_n(1).
\end{align*}
We thus complete the proof of \eqref{eq_prop_asy_env_1} and the theorem.
\end{proof}
We now show the equivalence between positiveness of the speed and finiteness of $\mathscr C$.

\begin{proof}[Proof of Theorem \ref{positive speed Thm 2}]
By dominated convergence, 
  \begin{align}\label{eq_finite_positive_0}
     0<\lim_{n\rightarrow\infty}\mathbb{E}_{\rho}\left[\textbf{1}_{\{\tau_{\rho_*}>n\}}\right]=\mathbb{E}_{\rho}\left[\beta(\rho)\right]<1
  \end{align}
  since the walk is transient. According to \eqref{eq_fresh_split}, \eqref{eq_reverse_in_proof_asy_env} and \eqref{eq_reverse_times_beta}, with $F\equiv1$ and $M=0$ (we do not use the condition \eqref{eq_assump_finite} and can drop the indicator function $\textbf{1}_{\mathcal{S}_{\mathbb T_*^{\leq X_{n}}}}$), we have
\begin{equation}\label{eq_finite_positive_2}
\begin{split}
&\mathbb{E}_{\rho}\left[\textbf{1}_{\{\tau_{\rho_*}>n\}}\right] \mathbb E_\rho[\beta(\rho)]\\
=& \mathbb E_{(\rho^-,\rho^+)}\left[\sum_{l=0}^{n}\Phi(N_{(\rho^-,\rho^+)}^{Y}(l))\textbf{1}_{\{\tau_{\rho^+}^X>n-l,n-l\in \{\theta_k,k\ge 0\}\}}\textbf{1}_{\{\tau_{X_{n-l}}^{Y}>l, Y_{l}=\rho^+\}}\right] \mathbb E_\rho [\beta(\rho)]\\
=& \mathbb E_{(\rho^-,\rho^+)}\left[\sum_{l=0}^{n}\Phi(N_{(\rho^-,\rho^+)}^{Y}(l))\textbf{1}_{\{\tau_{\rho^+}^X=\infty,n-l\in \{\Theta_k,k\ge 0\}\}}\textbf{1}_{\{\tau_{X_{n-l}}^{Y}>l, Y_{l}=\rho^+\}}\right].
\end{split}
\end{equation}

First, suppose the transient ERRW on a Galton-Watson tree $(X_n)_{n\ge 0}$ has a positive speed. From Fact \ref{lemma_regspeed}, we have $\mathbb{E}_{\rho}[\Theta_{1}|\tau_{\rho_*}=\infty]<\infty$. Fix $K\ge1$. We see that 
\begin{align*}
      &\textbf{1}_{\{\tau_{\rho^+}^X=\infty,n-l\in \{\Theta_k,k\ge 0\}\}}\textbf{1}_{\{\tau_{X_{n-l}}^{Y}>l, Y_{l}=\rho^+\}}\\
    \ge &\textbf{1}_{\{\tau_{\rho^+}^X=\infty,n-l\in \{\Theta_k,k\ge K\}\}}\textbf{1}_{\{\tau_{X_{\Theta_K}}^{Y}>l, Y_{l}=\rho^+\}}.
\end{align*}
Thus, we can apply the renewal theorem when $n-l$ goes to infinity. Recall that $b_i:={\mathbb P}_{\rho}(i\in \{\Theta_k,k\ge0\}|\tau_{\rho_*}=\infty)$. From the regeneration structure and branching property at $\Theta_K$,
\begin{equation}\label{eq_finite_positive_3}
\begin{split}
&\mathbb E_{(\rho^-,\rho^+)}\left[\sum_{l=0}^{n}\Phi(N_{(\rho^-,\rho^+)}^{Y}(l))\textbf{1}_{\{\tau_{\rho^+}^X=\infty,n-l\in \{\Theta_k,k\ge 0\}\}}\textbf{1}_{\{\tau_{X_{n-l}}^{Y}>l, Y_{l}=\rho^+\}}\right]\\
\ge &\mathbb{E}_{(\rho^-,\rho^+)}\left[\sum_{l=0}^{n}\Phi(N_{(\rho^-,\rho^+)}^{Y}(l))\textbf{1}_{\{\tau_{\rho^+}^X=\infty,n-l\in \{\Theta_k,k\ge K\}\}}\textbf{1}_{\{\tau_{X_{\Theta_K}}^{Y}>l, Y_{l}=\rho^+\}}\right]\\
    \ge&\mathbb{E}_{(\rho^-,\rho^+)}\left[\sum_{l=0}^{n/2}\Phi(N_{(\rho^-,\rho^+)}^{Y}(l))\textbf{1}_{\{\tau_{\rho^+}^X=\infty,n-l\ge\Theta_K\}}b_{n-l-\Theta_K}\textbf{1}_{\{\tau_{X_{\Theta_K}}^{Y}>l, Y_{l}=\rho^+\}}\right].
\end{split}
\end{equation}
By applying Fatou's Lemma and \eqref{eq_conv_b_i} on the last display of \eqref{eq_finite_positive_3}, we see that 
\begin{align*}
&\liminf_{n\rightarrow\infty}   \mathbb{E}_{(\rho^-,\rho^+)}\left[\sum_{l=0}^{n/2}\Phi(N_{(\rho^-,\rho^+)}^{Y}(l))\textbf{1}_{\{\tau_{\rho^+}^X=\infty,n-l\ge\Theta_K\}}b_{n-l-\Theta_K}\textbf{1}_{\{\tau_{X_{\Theta_K}}^{Y}>l, Y_{l}=\rho^+\}}\right]\\
\ge &\frac{1}{\mathbb E_{\rho}[\Theta_1|\tau_{\rho_*}=\infty]}\mathbb{E}_{(\rho^-,\rho^+)}\left[\sum_{l=0}^{\infty}\Phi(N_{(\rho^-,\rho^+)}^{Y}(l))\textbf{1}_{\{\tau_{\rho^+}^X=\infty\}}\textbf{1}_{\{\tau_{X_{\Theta_K}}^{Y}>l, Y_{l}=\rho^+\}}\right].
\end{align*}
It yields with \eqref{eq_finite_positive_2} that 
\begin{equation}\label{eq_finite_positive_4}
\begin{split}
     &\liminf_{n\rightarrow\infty}   \mathbb{E}_{\rho}\left[\textbf{1}_{\{\tau_{\rho_*}>n\}}\right]
        \ge\frac{1}{\mathbb E_{\rho}[\Theta_1\textbf{1}_{\{\tau_{\rho_*}=\infty\}}]}\mathbb{E}_{(\rho^-,\rho^+)}\left[\sum_{l=0}^{\infty}\Phi(N_{(\rho^-,\rho^+)}^{Y}(l))\textbf{1}_{\{\tau_{\rho^+}^X=\infty\}}\textbf{1}_{\{\tau_{X_{\Theta_K}}^{Y}>l, Y_{l}=\rho^+\}}\right]
\end{split}
\end{equation}
for all $K\ge1$. By \eqref{eq_C_expression} and the fact that $\mathbb{E}_{\rho}[\Theta_{1}|\tau_{\rho_*}=\infty]<\infty$, the right-hand side of  \eqref{eq_finite_positive_4} goes to  ${\mathscr C}/{\mathbb E_{\rho}[\Theta_1\textbf{1}_{\{\tau_{\rho_*}=\infty\}}] }$ as $K\rightarrow\infty$ by monotone convergence, which implies that 
\begin{align*}
    \mathbb E_{\rho}[\beta]\ge \frac{\mathscr C}{\mathbb E_{\rho}[\Theta_1\textbf{1}_{\{\tau_{\rho_*}=\infty\}}] }, 
\end{align*}
so $\mathscr C$ is finite. 

Next, suppose $\mathscr{C}<\infty$. By Theorem \ref{prop_asy_env}, specifically \eqref{eq_prop_asy_env_1} with $F\equiv1$ (in the proof of Theorem \ref{prop_asy_env}, we do not use the condition that $\mathbb E_{\rho}[\Theta_1\textbf{1}_{\{\tau_{\rho_*}=\infty\}}]<\infty$), we have
\begin{align*}
     \frac{\mathscr C}{\mathbb E_{\rho}[\Theta_1\textbf{1}_{\{\tau_{\rho_*}=\infty\}}]}=\mathbb E_{\rho}[\beta].
 \end{align*}
 Hence we have the finiteness of the $E_{\rho}[\Theta_1|\tau_{\rho_*}=\infty]$, which is equivalent to positive speed.
\end{proof}
\section{Speed of ERRW}\label{section_speed}	
\subsection{Speed Formula}\label{Sub_speedformula}
	
 Recall the definition of $\Phi$ and $F$ in \eqref{eq_phi_intro}  and \eqref{eq_def_F} respectively.
	
\begin{proof}[Proof of Theorem \ref{speed formula introduction}]
By dominated converge, we have 
\begin{align*}
    v=\lim_{n\rightarrow\infty} \mathbb E_{\rho}\left[\frac{|X_n|}{n}\Bigg|\mathcal{S}\right].
\end{align*}
We observe that 
\begin{align*}
     \mathbb E_{\rho}\left[|X_n|\big|\mathcal{S}\right]=\sum_{k=0}^{n-1}\mathbb E_{\rho}\left[ |X_{k+1}|-|X_k| \big|\mathcal{S}\right]=\sum_{k=0}^{n-1}\mathbb E_{\rho}\left[ \sum_{i=1}^{\nu(X_k)}\eta(X_k,X_ki)-\eta({X_k},(X_k)_{*})\Bigg| \mathcal{S}\right].
\end{align*}
We then apply Theorem \ref{prop_asy_env} and let the functional $\mathbf F$ be $\sum_{i=1}^{\nu(\rho)}\eta(\rho,\rho i)-\eta({\rho},\rho_*)$. With Lemma \ref{Lm_exp_bais}, we have
\begin{equation}\label{eq_sf_1}
    \begin{split}
          &\lim_{k\rightarrow \infty} \mathbb E_{\rho}\left[ \sum_{i=1}^{\nu(X_k)}\eta(X_k,X_ki)-\eta({X_k},(X_k)_*)\Bigg| \mathcal{S}\right]=\mathscr C^{-1}\times\\
   & \mathbb E_{(\rho^-,\rho^+)}\left[\left(\sum_{i=1}^{\nu(\rho^+)}\eta(\rho^+,\rho^+i)-\eta(\rho^+,\rho^-)\right)\frac{\beta(\rho^-)(1-\beta(\rho^+))}{\eta(\rho^+,\rho^-)}\sum_{k\geq0}\Phi(k)(1-\beta(\rho^+))^k(1-\beta(\rho^-))^k\right].
    \end{split}
\end{equation}

Recall the definition of $(A_i,1\le i\le \nu)$ in \eqref{eq_Aisratioomega}. From \eqref{eq-conductance}, we see that 
\begin{equation}\label{eq_sf_2}
    \begin{split}
          &\mathbb E_{(\rho^-,\rho^+)}\left[\left(\sum_{i=1}^{\nu(\rho^+)}\eta(\rho^+,\rho^+i)-\eta(\rho^+,\rho^-)\right)\frac{\beta(\rho^-)(1-\beta(\rho^+))}{\eta(\rho^+,\rho^-)}\sum_{k\geq0}\Phi(k)(1-\beta(\rho^+))^k(1-\beta(\rho^-))^k\right]\\
      =&\mathbb E_{(\rho^-,\rho^+)}\left[\frac{\beta_0(\sum_{i=1}^{\nu}A_i-1)}{1+\sum_{i=1}^{\nu}A_i\beta(\rho^+i)}\times \sum_{k\geq0}\Phi(k)(\frac{1-\beta(\rho^-)}{1+\sum_{i=1}^{\nu}A_i\beta(\rho^+i)})^k\right]\\=&\mathbb{E}\left[\frac{\beta_0(\sum_{i=1}^{\nu}A_i-1)}{1+\sum_{i=1}^{\nu}A_i\beta_i}\times F\left(\frac{1-\beta_0}{1+\sum_{i=1}^{\nu}A_i\beta_i}\right)\right]
    \end{split}
\end{equation}
where $\beta_0,\beta_1,\cdots$ are i.i.d copies of $\beta$. In the same way, 
\begin{equation}\label{eq_sf_3}
    \begin{split}
         \mathscr{C}=&\mathbb{E}_{(\rho^-,\rho^+)}\left[\frac{\beta(\rho^-)(1-\beta(\rho^+))}{\eta(\rho^+,\rho^-)}\sum_{k\geq0}\Phi(k)(1-\beta(\rho^+))^k(1-\beta(\rho^-))^k\right]\\
      =&\mathbb{E}\left[\frac{\beta_0(\sum_{i=1}^{\nu}A_i+1)}{1+\sum_{i=1}^{\nu}A_i\beta_i}\times F\left(\frac{1-\beta_0}{1+\sum_{i=1}^{\nu}A_i\beta_i}\right)\right].
    \end{split}
\end{equation}
Then we conclude from \eqref{eq_sf_1}-\eqref{eq_sf_3} that 
\begin{align*}
     v=&\lim_{n\rightarrow\infty}\frac{1}{n}\sum_{k=0}^{n-1}\mathbb E_{\rho}\left[ \sum_{i=1}^{\nu(X_k)}\eta(X_k,X_ki)-\eta({X_k},(X_k)_*)\Bigg| \mathcal{S}\right]\\
     =&\lim_{n\rightarrow \infty} \mathbb E_{\rho}\left[ \sum_{i=1}^{\nu(X_k)}\eta(X_k,X_ki)-\eta({X_k},(X_k)_*)\Bigg| \mathcal{S}\right]\\
     =&\mathbb{E}\left[\frac{\beta_0(\sum_{i=1}^{\nu}A_i+1)}{1+\sum_{i=1}^{\nu}A_i\beta_i}\times F\left(\frac{1-\beta_0}{1+\sum_{i=1}^{\nu}A_i\beta_i}\right)\right]^{-1}\mathbb{E}\left[\frac{\beta_0(\sum_{i=1}^{\nu}A_i-1)}{1+\sum_{i=1}^{\nu}A_i\beta_i}\times F\left(\frac{1-\beta_0}{1+\sum_{i=1}^{\nu}A_i\beta_i}\right)\right].
\end{align*}
\end{proof}

We show briefly how to get Corollary \ref{cor_formula}. Let 
\begin{align*}
    G(x)={_2F_1}(1,\alpha_c;\alpha_p;x) :=\sum_{k\geq0}\frac{\Gamma(\alpha_p)\Gamma(\alpha_c+k)}{\Gamma(\alpha_c)\Gamma(\alpha_p+k)}x^k,
\end{align*}
and
\begin{equation*}
    F(x)=\left(G(x)+\frac {xG'(x)}{\alpha_c}\right).
\end{equation*}
 By taking $\alpha_c=\alpha_p=\alpha$, $G(x)=1/(1-x)$. We can readily get the first formula in Corollary \ref{cor_formula}. When $\alpha_p=1,\alpha_c=1/2$, we can express $G$ as
\begin{equation*}
		G(x)=\frac{1}{\sqrt{1-x}},
	\end{equation*}
 by considering the expansion of binomial series
	\begin{equation*}
		\frac{1}{\sqrt{1-x}}=\sum_{k\geq0}\frac{(2k-1)!!}{(2k)!!}x^{k}.
	\end{equation*}
 Then we obtain the second formula (\ref{eq_speed_errw}) for ERRW without direction.

\subsection{Conditions for positive speed }
 
 In remaining part of the section, we set $\nu(\rho^+)$ as $\nu$, $\beta(\rho^+)$ as $\beta$, $\beta(\rho^+ i)$ as $\beta_i$, $\beta(\rho^-)$ as $\beta_0$, $\eta(\rho^+,\rho^+ i)$ as $\eta_i$ and $\eta(\rho^+,\rho^-)$ as $\eta_0$ when we work on a double tree. For a Galton-Watson tree, we also use $\nu$ for $\nu(\rho)$ and $\eta_0,\eta_i,\beta_i$ for $\eta(\rho,\rho_*),\eta(\rho,\rho i),\beta(\rho i),1\leq i\leq\nu$ respectively. Note that $A_i=\eta_i/\eta_0,1\le i\le \nu$ have a generic distribution $A$, which is independent of $\nu$. Let $\rm GW(\cdot)$ denote the Galton-Watson measure, and $\mathbb E_{GW}$ the corresponding expectation. We assume in this section that $\mathbb E_{GW}[\nu]<\infty$. We first show the moment estimation of $1/\beta$ is related to $A$. It is in fact a generalization of  \cite[Lemma 2.2]{aidekon2008transient}.
 
Recall that  $\beta(x)$ can be seen as the effective conductance of the subtree rooted at $x$, and
	\begin{equation}\label{eq_recur}
		\frac{1}{\beta}=1+\frac{1}{\sum_{i=1}^{\nu} A_i\beta_i}.
	\end{equation}

Let $\mu$ be the number of vertices in the first generation that are the roots of an infinite subtrees. We mention that $\mu$ follows the offspring distribution of another Galton-Watson tree denoted by $\mathbb T_{\mathcal{S}}$, which is made up of vertices $\{x\in\mathbb{T}:\mathbb T_x \text{ is infinite}\}$ with the generating function $\mathbb E_{GW}[s^{\mu}]=(f(q+(1-q)s)-q)/(1-q)$ \cite[Proposition 5.28]{lyons2017probability}. 
 
 Based on the knowledge above, we state a frequently used lemma. 
	\begin{lemma}\label{lemma_1overbeta}
		For a transient $(\alpha_p,\alpha_c)$-ERRW on a Galton-Watson tree and a non-negative real number $k$, we have when $p_0+p_1>0$,
		\begin{equation*}
			\mathbb E\left[\frac{\textbf{1}_{\mathcal{S}}}{\beta^{k}}\right]<\infty
		\end{equation*}
		if and only if $\mathbb E\left[A^{-k}\right]f'(q)<1$.
	\end{lemma}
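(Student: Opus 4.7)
The plan is to work on the backbone tree $\mathbb T_{\mathcal S}$ (the subtree of vertices with infinite progeny) and exploit $\beta_i = 0$ on finite subtrees, so that the recursion \eqref{eq_recur} effectively lives on $\mathbb T_\mathcal S$. On $\mathbb T_\mathcal S$, the backbone offspring $\mu$ has generating function $g(s) = (f(q+(1-q)s)-q)/(1-q)$, whence $g(0)=0$ and $\mathbb P(\mu = 1 \mid \mathcal S) = g'(0) = f'(q)$. Fix a spine $\rho = v_0, v_1, v_2, \ldots$ in $\mathbb T_\mathcal S$ and let $\tau := \inf\{n \geq 0 : \mu(v_n) \geq 2\}$; under $\mathbb P(\cdot\mid \mathcal S)$, $\tau$ is geometric with success probability $1-f'(q)$, hence a.s.\ finite since $f'(q) < 1$ in the super-critical case. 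Along the pipe the recursion reduces to $Z_{v_n} = 1 + Z_{v_{n+1}}/A_n$ where $A_n$ is the single ratio at $v_n$ (distributed as $A$ and independent across $n$ and of the tree structure), and iterating yields
\begin{equation*}
    Z_\rho = \sum_{i=0}^{\tau-1} P_i^{-1} + Z_{v_\tau}\, P_\tau^{-1}, \qquad P_i := \prod_{j=0}^{i-1} A_j, \quad P_0 := 1.
\end{equation*}

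For the ``only if'' direction, use $Z_\rho \geq P_\tau^{-1}$ (since $Z_{v_\tau}\geq 1$); independence of the $A_j$'s from $\tau$ then yields
\begin{equation*}
    \mathbb E\!\left[\frac{\mathbf 1_\mathcal S}{\beta^k}\right] \geq \mathbb E[P_\tau^{-k}\mathbf 1_\mathcal S] = \mathbb P(\mathcal S)\bigl(1-f'(q)\bigr)\sum_{n\geq 0}\bigl(f'(q)\mathbb E[A^{-k}]\bigr)^{n},
\end{equation*}
which is infinite precisely when $f'(q)\mathbb E[A^{-k}] \geq 1$.

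For the ``if'' direction, I would take $k$-th moments of the pipe identity. The pipe sum $\sum_{i<\tau} P_i^{-1}$ is controlled by Jensen's inequality $\bigl(\sum_{i<\tau} P_i^{-1}\bigr)^k \leq \tau^{k-1}\sum_{i<\tau} P_i^{-k}$; after conditioning on $\tau=n$ and using $\mathbb P(\tau=n\mid\mathcal S)=f'(q)^n(1-f'(q))$ together with the independence of the $A_j$'s, the resulting double sum $\sum_n n^{k-1} f'(q)^n \sum_{i<n}\mathbb E[A^{-k}]^i$ is finite under the hypothesis $f'(q)\mathbb E[A^{-k}] < 1$. The remaining term $\mathbb E[Z_{v_\tau}^k P_\tau^{-k}\mathbf 1_\mathcal S]$ factors into the geometric series above times $\mathbb E[Z^k\mid \mu\geq 2]$, so it reduces to bounding the latter. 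For a vertex with $\mu=j\geq 2$, Jensen applied to $x\mapsto x^{-k}$ gives $(\sum_{i=1}^{j}A_i\beta_i)^{-k} \leq j^{-(k+1)}\sum_{i=1}^j (A_i\beta_i)^{-k}$, yielding $\mathbb E[Z^k\mid \mu=j]\leq C\bigl(1+j^{-k}\mathbb E[A^{-k}]\Lambda_k/(1-q)\bigr)$ where $\Lambda_k := \mathbb E[\mathbf 1_\mathcal S/\beta^k]$. Averaging over $\mu\geq 2$ contributes an extra factor at most $2^{-k}$ and leads to a self-referential inequality $\Lambda_k\leq B + \kappa\, \Lambda_k$ with $\kappa<1$, which is closed by induction on $k$ (handling integer $k$ first, using the finiteness of lower moments from the inductive hypothesis, then extending to real $k$ by interpolation).

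The main obstacle is the branching-vertex step: a naive bound of $Z$ at a $\mu\geq2$ vertex using a single infinite child only gives the strictly stronger sufficient condition $\mathbb E[A^{-k}] < 1$ and loses the $f'(q)$ factor. Recovering the sharp criterion $\mathbb E[A^{-k}]f'(q)<1$ requires genuinely combining the pipe decomposition with the $j^{-(k+1)}$ gain from Jensen at $\mu\geq 2$ vertices, following the induction strategy used for $\lambda$-biased walks in \cite[Lemma~2.2]{aidekon2008transient}.
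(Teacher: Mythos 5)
Your necessity ("only if'') argument is sound and is essentially the paper's argument iterated along the spine: the paper uses one step of the recursion together with $\mathbb E[\beta^{-k}\mid\mathcal S]=\mathbb E[\beta_1^{-k}\mid\mathcal S]$, whereas you unroll to the first branching generation $\tau$; both give $\mathbb E[\mathbf 1_\mathcal S/\beta^k]\gtrsim\sum_n (f'(q)\mathbb E[A^{-k}])^n$. (One point to verify explicitly is the independence of $\tau$ from the spine ratios $(A_j)$, which holds because $\tau$ is tree-measurable while the $A_j$'s come from the Dirichlet environment; the marginal of $A_j$ being $A$ regardless of which child turns out to be on the spine also needs a word.)

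The sufficiency ("if'') direction, however, has a genuine gap, and it is exactly where you flag the difficulty. The Jensen step $(\sum_{i=1}^j A_i\beta_i)^{-k}\le j^{-(k+1)}\sum_i(A_i\beta_i)^{-k}$ leads, after combining with the pipe factor, to a self-referential inequality whose coefficient $\kappa$ satisfies (even if one discards \emph{all} the $2^{O(k)}$ constants from repeated convexity)
\begin{equation*}
\kappa \;\gtrsim\; \frac{(1-f'(q))\,\mathbb E[A^{-k}]\,\mathbb E[\mu^{-k}\mid\mu\ge 2,\mathcal S]}{1-f'(q)\,\mathbb E[A^{-k}]},
\end{equation*}
and $\kappa<1$ rearranges to $\mathbb E[A^{-k}]\,\mathbb E[\mu^{-k}\mid\mathcal S]<1$, which is \emph{strictly stronger} than the target $\mathbb E[A^{-k}]f'(q)<1$ because $\mathbb E[\mu^{-k}\mid\mathcal S]>\mathbb P(\mu=1\mid\mathcal S)=f'(q)$. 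A concrete counterexample: take $p_0=0$, $p_1=0.9$, $p_2=0.1$, $\alpha_p=\alpha_c$, and $k$ chosen so that $\mathbb E[A^{-k}]=1.1$; then $\mathbb E[A^{-k}]f'(q)=0.99<1$, but $\mathbb E[A^{-k}]\mathbb E[\mu^{-k}\mid\mathcal S]=1.1\times(0.9+0.1\cdot 2^{-k})>1$ for all $k\le 1$. So the Jensen route cannot reach the sharp criterion, and the claim ``$\kappa<1$'' is false in general. The induction-on-$k$ you invoke does not repair this: the obstruction is in the coefficient for the \emph{same} $k$, and lower moments do not control the term multiplying $\Lambda_k$. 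There is also a second, independent issue: the conclusion $\Lambda_k\le B/(1-\kappa)$ from $\Lambda_k\le B+\kappa\Lambda_k$ presupposes $\Lambda_k<\infty$, which is exactly what is being proved.

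The paper handles both problems differently. Instead of Jensen at a branching vertex, it bounds $1/(\sum_i A_i\beta_i)^k$ by $(A_I\beta_I)^{-k}\mathbf 1_{\{A_i\beta_i<\epsilon\ \forall i\ne I\}}+\epsilon^{-k}$ where $I$ is the index of the largest term. The key point is that as $\epsilon\to 0$, the probability of the first event, given $\nu$, converges to $q^{\nu-1}$ (all other children's subtrees are finite), so after weighting the recursive coefficient converges to $\mathbb E[A^{-k}(\nu q^{\nu-1}\mathbf 1_{\{\nu>1\}}+\mathbf 1_{\{\nu=1\}})]=\mathbb E[A^{-k}]f'(q)$ exactly — no averaged $j^{-k}$ factor and no lossy convexity constants. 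This isolates long one-dimensional pipes as the only mechanism for moment blow-up, which is the heart of the sharp criterion. For a priori finiteness, the paper works with the truncated conductances $\beta^{(n)}$, obtains a uniform bound by iterating the recursion in $n$, and then passes to the limit by Fatou. Your spine decomposition captures the pipe contribution nicely, but you would still need the max-plus-$\epsilon$-cutoff mechanism (or something equivalent) at branching vertices, and a truncation scheme to start the recursion.
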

\begin{remark}
  The Lemma can be extended to the random walk in random environment on a Galton-Watson tree such that $A$ is independent of $\nu$.
\end{remark}	
 \begin{proof}
		We start from the `if' part and we assume $\mathbb E\left[A^{-k}\right]f'(q)<1$. Let $\beta^{(n)}(x):={{\rm P}}^\eta_{x}(\tau_n^e<\tau_{x_*})$, where $\tau_n^e=\inf\{k\geq0:|X_k|=n\}$ is the entrance time of level $n$ and $\beta^{(n)}$ is the conductance of a tree with every vertex above level $n$ removed. All discussions about conductance still work for $\beta^{(n)}$, and in particular,
		\begin{equation*}
			\frac{1}{\beta^{(n)}(\rho)}=1+\frac{1}{\sum_{i=1}^\nu A(\rho i)\beta^{(n)}{(\rho i)}}.
		\end{equation*} 
For $N>1$, it holds for any $x>0$ that
 \begin{equation*}
     (1+x)^k\leq1+C(N)x^k\textbf{1}_{\{x> N-1\}}+N^k\textbf{1}_{\{x\leq N-1\}}\leq1+C(N)x^k+N^k
 \end{equation*} 
where $C(N)>1$ goes to $1$ as $N$ goes to infinity. Hence
		\begin{align*}
			\frac{\textbf{1}_{\mathcal{S}}}{(\beta^{(n)}(\rho))^{k}}\leq 1+C(N)\frac{\textbf{1}_{\mathcal{S}}}{(\sum_{i=1}^\nu A(\rho i)\beta^{(n)}{(\rho i)})^{k}}+N^{k}.
		\end{align*}
Let $I$ be the index of the children of $\rho$ such that $A(\rho I)=\max\{A(\rho i): 1\le i\le \nu(\rho),\beta(\rho i)>0 \}$. We have that
		\begin{equation*}
			\frac{\textbf{1}_{\mathcal{S}}}{(\sum_{i=1}^\nu A(\rho i)\beta^{(n)}{(\rho i)})^k}\leq\frac{\textbf{1}_{\{ \exists I\}}}{(A(\rho I)\beta^{(n)}(\rho I))^k}\textbf{1}_{\{A(\rho i)\beta^{(n)}(\rho i)<\epsilon,\forall i\not=I\}}+\frac{1}{\epsilon^k}.
		\end{equation*} Observe that
		\begin{equation*}
			\begin{split}
				&\lim_{\epsilon\to0}\mathbb E\left[\frac{\textbf{1}_{\{\exists I\}}}{(A(\rho I)\beta^{(n)}(\rho I))^k}\textbf{1}_{\{A(\rho i)\beta^{(n)}(\rho i)<\epsilon,\forall i\not=I\}}\right]\\
				\le &\mathbb E\left[\frac{\textbf{1}_{\mathcal{S}}}{(\beta^{(n-1)}(\rho ))^k}\right]\lim_{\epsilon\rightarrow 0} \mathbb{E} \left[\nu(\rho) A(\rho 1) ^{-k}\textbf{1}_{\{\nu(\rho) >0 \} }\textbf{1}_{\{A(\rho i)\beta(\rho i)<\epsilon,\forall i\not=1\}}\right] \\
				=&\mathbb E\left[\frac{\textbf{1}_{\mathcal{S}}}{(\beta^{(n-1)}(\rho))^k}\right]\mathbb E\left[A^{-k}(\nu q^{\nu-1}\textbf{1}_{\{\nu>1\}}+\textbf{1}_{\{\nu=1\}})\right]
			\end{split}
		\end{equation*}
by dominated convergence. Hence we have
		\begin{equation*}
			\mathbb E\left[\frac{\textbf{1}_{\mathcal{S}}}{(\beta^{(n)})^k}\right]\leq1+N^{k}+C(N)\lambda_\epsilon\mathbb E\left[\frac{\textbf{1}_{\mathcal{S}}}{(\beta^{(n-1)})^k}\right]+C(N)\epsilon^{-k},
		\end{equation*}
		where $\lambda_\epsilon:={\mathbb E}\left[A^{-k}\right]f'(q)+\delta<1/C(N)$ as long as $\epsilon$ is small enough and $N$ large enough. Finally,
		\begin{equation*}
			\mathbb E\left[\frac{\textbf{1}_{\mathcal{S}}}{(\beta^{(n)})^k}\right]\leq (1+C(N)\epsilon^{-k}+N^k)\frac1{(1-C(N)\lambda_\epsilon)}+(C(N)\lambda_\epsilon)^n(1-q).
		\end{equation*}
	We let $n$ go to infinity and apply Fatou's lemma to complete the proof.
		
  We continue to prove the `only if' part. Assume that ${\mathbb E}\left[A^{-k}\right]f^\prime(q)\ge 1$. Recall that $\mu$ is the number of children of $\rho$ where an infinite subtree is rooted. By (\ref{eq_recur}), we have that conditioned on survival,
		\begin{align*}
			\frac{1}{\beta}\geq 1+\textbf{1}_{\{\mu=1\}}\frac{1}{A_1\beta_1}.
		\end{align*}
		Therefore,
		\begin{align*}
			\frac{1}{\beta^k}> \textbf{1}_{\{\mu=1\}}\frac{1}{A_1^k\beta^k_1}.
		\end{align*}
		By taking expectations on both sides, we have
		\begin{align*}
			\mathbb E\left[\beta^{-k}|\mathcal{S}\right]> f^\prime(q){\mathbb E}\left[A_1^{-k}\right]\mathbb E\left[\beta_1^{-k}|\mathcal{S}\right],
		\end{align*}
  where $f'(q)$ comes from the fact that $\mathbb P(\mu=1|\mathcal S)=f'(q)$. Since $\beta$ and $\beta_1$ have the same distribution, and ${\mathbb E}[A^{-k}]f^\prime(q)\ge 1$ by assumption, we have $\mathbb E\left[\beta^{-k}|\mathcal{S}\right]=\infty$.
		
  \end{proof}
		Now assume ${\mathbb E}[A^{-k}]f^\prime(q)=1$. We can also deduce by induction that
		\begin{align*}
			\frac{\textbf{1}_{\mathcal{S}}}{\beta^k}\ge \textbf{1}_{\mathcal{S}}(1+\textbf{1}_{\{\mu(0)=1\}}\frac{1}{A(0)}+\textbf{1}_{\{\mu(0)=1,\mu(1)=1\}}\frac{1}{A(0)A(1)}\dots)^k,
		\end{align*}
		where $\mu(n)$, $n\geq 0$ are i.i.d with the law of $\mu$ and $A(n)$ are i.i.d with the law of $A$. 
		Set $\textbf{1}_{\{\mu(n)=1\}}A(n)$ as $\Xi_n$ for $n\ge 0$, which are i.i.d, with 
		\begin{align*}
			\mathbb E\left[(\Xi_n)^k|\mathcal S\right]=f^\prime(q) {\mathbb E}\left[\frac{1}{A^k}\right]=1.
		\end{align*}
		Then $\psi:=1+\Xi_0+\Xi_0\Xi_1\dots$ satisfies that 
		\begin{align*}
			\lim_{x\rightarrow\infty } \frac{{\mathbb P}(\psi>x|\mathcal S )}{x^{-k}}= C,
		\end{align*}
		for a positive constant $C$ by the Kesten–Grincevicius–Goldie theorem. (See, for example, \cite{kevei2016note} for a statement of the theorem.) Hence
		\begin{align}\label{eq_1/beta_ge_x}
		\liminf_{x\rightarrow\infty } \frac{{\mathbb P}(\beta^{-1}>x|\mathcal S )}{x^{-k}}\ge C.
		\end{align}
  It indicates the `heavy tail' property of $1/\beta$. 

 For an ERRW, there is analogous result when $p_0+p_1=0$. Recall that $d:=\min\{n\ge 1,p_n>0 \}$.
	\begin{lemma}\label{lemma_dgreater1}
		For a transient $(\alpha_p,\alpha_c)$-ERRW on a Galton-Watson tree such that $d\ge 2$ and a real number $k\ge 0$, 
		\begin{align*}
			\mathbb{E}\left[\frac{1}{\beta^k}\right]<\infty
		\end{align*}
		if and only if $k<d \alpha_c$.
	\end{lemma}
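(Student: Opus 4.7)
My plan has two parts matching the ``only if'' and ``if'' directions.

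For the necessity direction, the key observation is the deterministic inequality $\beta \le 1-\eta_0$. This drops straight out of the recursion $\beta = (\sum_{i=1}^{\nu}\eta_i\beta_i)/(\eta_0+\sum_{i=1}^{\nu}\eta_i\beta_i)$ together with the pointwise bound $\sum_{i=1}^{\nu}\eta_i\beta_i \le \sum_{i=1}^{\nu}\eta_i = 1-\eta_0$ (each $\beta_i\le 1$). Hence $\beta^{-k}\ge (1-\eta_0)^{-k}$, and
\[
   \mathbb{E}[\beta^{-k}]\ \ge\ p_d\,\mathbb{E}[(1-\eta_0)^{-k}\mid \nu=d].
\]
Given $\nu=d$, the variable $\eta_0$ has the $\mathrm{Beta}(\alpha_p, d\alpha_c)$ distribution, so by Lemma \ref{prop_RWDEprop} the conditional expectation equals $\Gamma(\alpha_p+d\alpha_c)\Gamma(d\alpha_c-k)/(\Gamma(d\alpha_c)\Gamma(\alpha_p+d\alpha_c-k))$, which is $+\infty$ as soon as $k\ge d\alpha_c$. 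Since $p_d>0$, this forces $\mathbb{E}[\beta^{-k}]=\infty$.

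For the sufficiency direction, I would switch to the Gamma representation of Lemma \ref{prop_RWDEprop}: write $\eta_i = \Gamma_i/\sum_{j=0}^{\nu}\Gamma_j$ with $\Gamma_0\sim\mathrm{Gamma}(\alpha_p,1)$ and $\Gamma_i\sim\mathrm{Gamma}(\alpha_c,1)$ for $i\ge 1$, all independent, so the recursion becomes $1/\beta = 1 + \Gamma_0/\sum_{i=1}^{\nu}\Gamma_i\beta_i$. Standard inequalities give $\mathbb{E}[\beta^{-k}] \le C_k\bigl(1+\mathbb{E}[\Gamma_0^k]\,\mathbb{E}[(\sum_{i=1}^\nu\Gamma_i\beta_i)^{-k}]\bigr)$. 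The second factor is finite iff the integral expression coming from the Laplace formula $x^{-k} = \Gamma(k)^{-1}\int_0^\infty s^{k-1}e^{-sx}\,ds$ is: exploiting independence of the $\Gamma_i$ from the $\beta_i$ and the i.i.d.\ nature of $(\beta_i)_{1\le i\le\nu}$,
\[
  \mathbb{E}\Bigl[\Bigl(\textstyle\sum_{i=1}^\nu \Gamma_i\beta_i\Bigr)^{-k}\Bigr]\ =\ \frac{1}{\Gamma(k)}\int_0^\infty s^{k-1}\,\mathbb{E}[g(s)^\nu]\,ds, \qquad g(s):=\mathbb{E}[(1+s\beta)^{-\alpha_c}].
\]
Since $g(s)\in(0,1]$ and $\nu\ge d$ almost surely (because $p_0=p_1=\cdots=p_{d-1}=0$), $\mathbb{E}[g(s)^\nu]\le g(s)^d$. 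Once one shows $g(s) = O((1+s)^{-\alpha_c})$, one gets $g(s)^d\lesssim (1+s)^{-d\alpha_c}$, and the beta-function identity $\int_0^\infty s^{k-1}(1+s)^{-d\alpha_c}ds = B(k, d\alpha_c-k)$ yields the claim for $0<k<d\alpha_c$.

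The main obstacle is establishing the tail bound $g(s) = O(s^{-\alpha_c})$, which is essentially the finiteness of $\mathbb{E}[\beta^{-\alpha_c}]$, itself a special case of what we want to prove, so some form of bootstrap is necessary. The route I would take is to work with the finite-level conductance $\beta^{(n)} := \mathbb{P}^\eta_\rho(\tau^e_n<\tau_{\rho_*})$, which is bounded below, satisfies exactly the same recursion with i.i.d.\ copies of $\beta^{(n)}$ playing the role of $\beta_i$, and decreases pointwise to $\beta$. The base case $\beta^{(1)}=1-\eta_0$ has explicit Beta-type moments, finite for all $k<d\alpha_c$ by the computation from the necessity part. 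For the inductive step, I would use the splitting $g_n(s) \le \mathbb{P}(\beta^{(n)}<1/s) + s^{-\alpha_c}\mathbb{E}[(\beta^{(n)})^{-\alpha_c}\mathbf{1}_{\beta^{(n)}\ge 1/s}]$, together with a tail bound on $\beta^{(n)}$ obtained from the previous step and Markov's inequality, and then exploit the factor $d$ in $\mathbb{E}[g_n(s)^\nu]\le g_n(s)^d$ (crucially using $d\ge 2$) to enlarge the range of admissible $k$ at each iteration until it saturates at $d\alpha_c$. Fatou's lemma then transfers the uniform-in-$n$ bound to $\beta$ itself.
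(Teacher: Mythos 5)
Your ``only if'' direction is correct and essentially equivalent to the paper's: both restrict to the event $\{\nu=d\}$ and reduce to an explicit Beta/Dirichlet moment. (The paper drops the ``$+1$'' in $1/\beta = 1+1/\sum A_i\beta_i$ and bounds by $\mathbb E[(\sum_{i=1}^d A_i)^{-k}]=\infty$; your deterministic bound $\beta\le 1-\eta_0$ gives the same conclusion.)

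The ``if'' direction is where there is a genuine gap. Your Laplace--Gamma reduction to $\int_0^\infty s^{k-1}\mathbb E[g(s)^\nu]\,ds$ with $\mathbb E[g(s)^\nu]\le g(s)^d$ is correct and elegant, and you are right that the crux is the tail bound $g(s)=O(s^{-\alpha_c})$, which is equivalent to $\mathbb E[\beta^{-\alpha_c}]<\infty$ and hence circular. The bootstrap you sketch to break the circularity does not close as written, because it does not produce a bound \emph{uniform in $n$}. Concretely: the constant in $g_n(s)\le C_n s^{-\alpha_c}$ is $C_n=\mathbb E[(\beta^{(n)})^{-\alpha_c}]$, and your iteration gives a recursion of the form $C_{n+1}\le A+B\,C_n^{k/\alpha_c}$ (with $k/\alpha_c$ close to $d\ge 2$), which can grow without bound; the base case $\mathbb E[(\beta^{(1)})^{-k}]<\infty$ for $k<d\alpha_c$ only controls $n=1$ and does not seed a uniform bound $\sup_n\mathbb E[(\beta^{(n)})^{-k}]<\infty$. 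Since $\beta^{(n)}\downarrow\beta$ pointwise, $g_n(s)\uparrow g_\infty(s)$, so you cannot dominate the sequence by the $n=1$ term either. What is missing is a \emph{contraction} mechanism: the paper gets it by writing $(1+x)^k\le 1+C(N)x^k+N^k$ with $C(N)\to 1$, then splitting off the largest $A_i\beta_i^{(n-1)}$ and observing that the coefficient $\delta(\epsilon):=\mathbb E\bigl[A(\rho I)^{-k}\mathbf 1_{\{A(\rho i)\beta^{(n-1)}(\rho i)<\epsilon,\,\forall i\ne I\}}\bigr]$ is dominated uniformly in $n$ by the version with $\beta$ in place of $\beta^{(n-1)}$ (because $\beta^{(n-1)}\ge\beta>0$ a.s.\ by transience), and tends to $0$ as $\epsilon\to0$ by dominated convergence. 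This yields $\mathbb E[(\beta^{(n)})^{-k}]\le A'+ C(N)\delta(\epsilon)\,\mathbb E[(\beta^{(n-1)})^{-k}]$ with $C(N)\delta(\epsilon)<1$, hence a bound uniform in $n$, and Fatou finishes. Your Laplace-transform decomposition does not isolate such a ``small-coefficient'' event and therefore does not produce a contraction; you would need to supply an argument of this type (or start the bootstrap from a nontrivial uniform moment bound, which you do not establish). One small side remark: the statement that $\beta^{(n)}$ ``is bounded below'' is not true in the sense of a deterministic positive lower bound; only $\beta^{(n)}\ge\beta>0$ a.s.\ holds, which is precisely what the paper's dominated-convergence step exploits.
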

	\begin{proof}
		Mimic the `if' part of Lemma \ref{lemma_1overbeta}, and we have for $k<d\alpha_c$
		\begin{align*}
			&\frac{1}{(\beta^{(n)}(\rho))^{k}}\leq 1+C(N)\frac{1}{(\sum_{i=1}^\nu A(\rho i)\beta^{(n)}{(\rho i)})^{k}}+N^{k}\\
			\leq& 1+C(N)\frac{1}{(\sum_{i=1}^d A(\rho i)\beta^{(n)}{(\rho i)})^{k}}+N^{k},
		\end{align*}
	where $C(N)$ goes to $1$ as $N$ goes to infinity. Then we can prove by induction. In fact,
  $${\mathbb E}\left[\max_{1\le i\le d} A(\rho i)^{-k}\right]\le d^k {\mathbb E}\left[\left(\sum_{i=1}^d A(\rho i)\right)^{-k}\right]<\infty$$
  by Lemma \ref{prop_RWDEprop}. By taking $1\le I\le d$ that maximizes $\{A_i\}_{1\le i\le d}$, it holds that
		\begin{align*}
			&1+C(N)\frac{1}{(\sum_{i=1}^d A(\rho i)\beta^{(n)}{(\rho i)})^{k}}+N^{k}\\
			\le &1+C(N)\frac{1}{(A(\rho I)\beta^{(n)}(\rho I))^k}\textbf 1_{\{ A(\rho i)\beta^{(n)}(\rho i)<\epsilon,\,\forall i\neq I\}}+C(N)\frac{1}{\epsilon^k}+N^k
		\end{align*}
		When $\epsilon$ is small enough, we have 
		\begin{align*}
			\mathbb E\left[\frac{1}{(\beta^{(n)}(\rho))^{k}}\right]\leq  1+C(N)\delta(\epsilon)\mathbb E\left[\frac{1}{(\beta^{(n)}(\rho I))^k}\right]+C(N)\frac{1}{\epsilon^k}+N^k 
		\end{align*}
for $\delta(\epsilon):= \mathbb E\left[A(\rho I)^{-k} \textbf 1_{\{ A(\rho i)\beta^{(n-1)}(\rho i)<\epsilon,\,\forall i\neq I\}} \right]$ goes to $0$ as $\epsilon$ goes to $0$ by dominated convergence. Fix $\epsilon$ and $N$ such that $C(N)\delta(\epsilon)<1$, by the induction argument, we have that $(1/\beta^{(n)}(\rho))^k$ is integrable and so is $(1/\beta(\rho))^k$ by Fatou's lemma.
		
		On the other hand, if $k\ge d \alpha_c $,
		\begin{align*}
			\frac{1}{(\beta(\rho))^k}=\left(1+\frac{1}{\sum_{i=1}^{\nu} A(\rho i)\beta{(\rho i)}}\right)^k\geq\textbf{1}_{\{ \nu=d\}}\frac{1}{(\sum_{i=1}^{d} A(\rho i)\beta{(\rho i)})^k}\ge  \textbf{1}_{\{ \nu=d\}} \frac{1}{(\sum_{i=1}^{d} A(\rho i))^k}.
		\end{align*}
		Thus, we have $ \mathbb{E}\left[\beta^{-k}\right]=\infty$ from ${\mathbb E}\left[(\sum_{i=1}^{d} A(\rho i))^{-k}\right]=\infty$.
	\end{proof}
	
Now we are ready to investigate when $\mathscr C$ is finite. We need a technical condition that 
	\begin{align}\label{eq_moment_condition}
		\mathbb E_{GW}\left[\nu^{K}\right]<\infty,
	\end{align} 
	where $K:=d\alpha_c+3+\alpha_p$ . The condition is not necessary for the finiteness of $\mathscr C$. As is shown in Figure \ref{fig-threezone}, we deal with three different situations separately:
 \begin{itemize}
			\item $p_0=0,p_1>0$;
			\item  $p_0=0,p_1=0$;
			\item  $p_0>0$.
 \end{itemize}

First assume that $p_0=0$ and $0<p_1\le 1$. For simplicity of notation, let $s:=\alpha_c-\alpha_p+2$ and  
	\begin{align}\label{eq_def_r}
		r:=\sup\{t:{\mathbb E}\left[A^{-t}\right]p_1<1\}.
	\end{align}
The function ${\mathbb E}\left[A^{-t}\right]$ is decreasing for $t\in(-\alpha_p,(\alpha_c-\alpha_p)/2)$ and increasing for $t\in ((\alpha_c-\alpha_p)/2,\alpha_c)$. It implies that $(\alpha_c-\alpha_p)\vee 0 \le r<\alpha_c$, a fact we often need in the proof below. Now we claim the proposition in this case.
	\begin{proposition}\label{01>0}
		For a transient $(\alpha_p,\alpha_c)$-ERRW, when $p_0=0$, $p_1>0$, and (\ref{eq_moment_condition}) holds, 
		\begin{align}\label{eq_exp_finite}
			\mathbb{E}\left[\frac{\beta_0(1-\beta)}{\eta_0}\sum_{k\geq0}\dfrac{\Gamma(\alpha_p)\Gamma(\alpha_c+k+1)}{\Gamma(\alpha_c+1)\Gamma(\alpha_p+k)}(1-\beta)^k(1-\beta_0)^k\right]<\infty
		\end{align}
		if and only if  $2r-\alpha_c+\alpha_p-1>0$.
	\end{proposition}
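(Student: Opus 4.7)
Set $s := \alpha_c - \alpha_p + 2$, so that the condition $2r - \alpha_c + \alpha_p - 1 > 0$ is equivalent to $s < 2r + 1$. The approach combines the asymptotics of $F(x) = \sum_{k\ge 0} \Phi(k) x^k$ near $x = 1$ with the heavy-tail estimates for $\beta$ from Lemma~\ref{lemma_1overbeta}. Applying Stirling's formula to $\Phi(k)$ gives $\Phi(k) \sim C k^{s-1}$ as $k \to \infty$, so that $F(x) \asymp (1-x)^{-s}$ as $x \to 1^-$ when $s > 0$, while $F$ stays uniformly bounded when $s \le 0$. Since $1 - (1-\beta)(1-\beta_0) = \beta + \beta_0 - \beta\beta_0 \asymp \beta+\beta_0$ on $[0,1]^2$, the finiteness of \eqref{eq_exp_finite} in the nontrivial case $s > 0$ is equivalent to that of
\[
    I := \mathbb E\!\left[\frac{\beta_0(1-\beta)}{\eta_0}(\beta + \beta_0)^{-s}\right];
\]
the case $s \le 0$ is immediate from $\mathbb E[(1-\beta)/\eta_0] = \mathbb E[1/(\eta_0 + \sum_i \eta_i \beta_i)] < \infty$ via a Dirichlet/Beta computation using \eqref{eq_moment_condition}, and one checks that $s \le 0$ automatically satisfies $s < 2r+1$.

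For the \emph{if} direction ($s < 2r+1$), I plan to first fix $(\beta, \eta_0)$ and integrate out the independent copy $\beta_0$. Lemma~\ref{lemma_1overbeta} gives $\mathbb P(\beta_0 \le \epsilon) \le C_\delta \epsilon^{r-\delta}$ for every small $\delta > 0$, and a dyadic decomposition yields
\[
    \mathbb E[\beta_0(\beta + \beta_0)^{-s}] \le C_\delta\bigl(1 + \beta^{r - \delta + 1 - s}\bigr),
\]
where the second term is active only when $s > r + 1$. Plugging back into $I$, it suffices to show that $\mathbb E[\beta^{r-\delta+1-s}(1-\beta)/\eta_0]$ is finite, i.e., a negative moment of $\beta$ of order $s - r - 1 + \delta$, weighted by $1/(\eta_0 + \sum_i \eta_i \beta_i)$. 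A further application of Lemma~\ref{lemma_1overbeta} applied to $\beta$ itself, after conditioning on the first-generation environment and using \eqref{eq_moment_condition} to control the $\nu$-dependence of the $\eta$-Dirichlet factors, gives finiteness provided $s - r - 1 + \delta < r$, i.e., $s < 2r + 1 - \delta$; choosing $\delta$ small enough yields the claim.

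For the \emph{only if} direction ($s \ge 2r + 1$), I would produce a matching lower bound on $I$ using the Kesten--Goldie-type two-sided tail $\mathbb P(\beta \le \epsilon) \asymp \epsilon^r$ (whose lower bound is \eqref{eq_1/beta_ge_x} and whose matching upper bound follows by combining Lemma~\ref{lemma_1overbeta} with Kesten--Goldie at the critical exponent). By independence, $\mathbb P(\beta \le \epsilon_k,\, \beta_0 \in (\epsilon_k, 2\epsilon_k]) \asymp \epsilon_k^{2r}$ for $\epsilon_k = 2^{-k}$. Restricting further to a positive-probability event on which $\eta_0$ and $1-\beta$ are of order one (realized by fixing a favorable configuration of the first-generation Dirichlet vector), the contribution of dyadic level $k$ to $I$ is of order $\epsilon_k \cdot \epsilon_k^{-s} \cdot \epsilon_k^{2r} = \epsilon_k^{2r - s + 1}$, and summing over $k$ diverges precisely when $2r - s + 1 \le 0$, i.e., $s \ge 2r + 1$.

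\textbf{Main obstacle.} The delicate point is the tight coupling between $\beta$ and $\eta_0$: both are functionals of the first-generation environment, so the weighted negative-moment estimate $\mathbb E[\beta^{-(s-r-1+\delta)}(\eta_0 + \sum_i \eta_i\beta_i)^{-1}]$ in the \emph{if} step cannot be split by independence, and the recursion for $\beta$ must be revisited to absorb the $1/(\eta_0 + R)$ factor inside the moment bound. Symmetrically, in the \emph{only if} step, the event $\{\beta \le \epsilon_k\}$ drives $\beta$ to zero via a long pipe ($\nu = 1$ at consecutive generations producing a Kesten--Goldie product), and one must verify that this event can be realized jointly with $\eta_0$ of order one to recover the full power $\epsilon_k^{2r-s+1}$. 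Reaching the sharp threshold $s = 2r+1$ (rather than a suboptimal $s < 2r+1-\delta'$) thus requires a careful use of the precise Kesten--Goldie asymptotic, and the moment condition \eqref{eq_moment_condition} on $\nu$ is what allows one to absorb all the first-generation losses.
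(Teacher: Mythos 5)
Your high-level plan is correct and matches the paper's: set $s=\alpha_c-\alpha_p+2$, use Stirling to get $F(x)\asymp(1-x)^{-s}$ for $s>0$, combine with the tail bound $\mathbb P(\beta\le\epsilon)\lesssim\epsilon^{r-\delta}$ from Lemma~\ref{lemma_1overbeta} and the critical lower bound \eqref{eq_1/beta_ge_x}. However, the proposal leaves its own ``main obstacle'' unresolved, and that obstacle is precisely where the hard work lies. You reduce the \emph{if} direction to showing $\mathbb E\bigl[\beta^{-(s-r-1+\delta)}(\eta_0+\sum_i\eta_i\beta_i)^{-1}\bigr]<\infty$ and remark that since $\beta$ and $\eta_0$ are functionals of the same first-generation environment you ``must revisit the recursion to absorb the $1/(\eta_0+R)$ factor'' — but you never carry this out, and it is not a routine refinement of Lemma~\ref{lemma_1overbeta}, whose proof controls only the marginal moment of $\beta$.

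The paper sidesteps the coupling entirely rather than revisiting it. Instead of keeping $\beta$ as an opaque object, it substitutes $1-\beta=\eta_0/(\eta_0+\sum_i\eta_i\beta_i)$ at the outset, so $1-(1-\beta)(1-\beta_0)=(\eta_0\beta_0+\sum_i\eta_i\beta_i)/(\eta_0+\sum_i\eta_i\beta_i)$ and the quantity becomes a rational expression in the first-generation Dirichlet vector $(\eta_0,\ldots,\eta_\nu)$ and the \emph{children's} conductances $(\beta_i)$, which are independent of the $\eta$'s conditionally on $\nu$. The resulting expectation is split into $I_1,I_2$ and then further by $\eta_0\gtrless 1/2$; on $\{\eta_0\le 1/2\}$ one can pick an index $i$ with $\eta_i>1/(2\nu)$ (hence the symmetry trick with $\eta_1$ and the factor $\nu^{s'}$), reducing to a product of three independent factors ($\beta_0$, $\eta_0$, $\beta_1$) whose joint integral is estimated directly via the explicit Dirichlet marginal densities \eqref{eq_eta_0}--\eqref{eq_eta_1} and Markov's inequality. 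Your route would need an analogous decoupling step, which is not supplied.

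On the \emph{only if} direction, your dyadic bucketing $\{\beta\le\epsilon_k,\beta_0\in(\epsilon_k,2\epsilon_k]\}$ requires a matching upper tail bound at the critical exponent $r$ (to ensure the dyadic shells carry positive mass of order $\epsilon_k^{r}$), which is more than the paper uses and more than Lemma~\ref{lemma_1overbeta} provides. The paper avoids it by lower-bounding $I_1$ on $\{\nu=1\}$, integrating out $\eta_0$ to a constant, and reducing to $\mathbb E\bigl[\beta_0^{1-s}\wedge\beta_1^{1-s}\bigr]\gtrsim\int_{z\le 1}\mathbb P(\beta_0\le z)^2 z^{-s}\,\mathrm d z$, where only the one-sided bound \eqref{eq_1/beta_ge_x} is needed. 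So your sketch has the right threshold and the right ingredients, but the proof of both directions is genuinely incomplete at the points you yourself flag.
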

	\begin{remark}
		In the special case $p_1=1$, where the tree degenerates to $\mathbb{Z}_+$, we need $\alpha_c>\alpha_p$ to make the walk transient. Then $r=\alpha_c-\alpha_p$ and the criterion becomes $\alpha_c-\alpha_p-1>0$, which coincides with the condition
		\begin{align*}
			{\mathbb E}\left[\frac{1}{A}\right]^{-1}=\frac{\alpha_c-1}{\alpha_p}>1
		\end{align*}
given in \cite[Theorem 1.16]{solomon1975random}.
	\end{remark}
	\begin{proof}
		For two quantities $A$ and $B$, we write $A \lesssim B$ (resp. $A \gtrsim B$) when there exists a positive constant $c$, such that $A \leq c B $ (resp. $A \geq cB$). Also, write $A\approx B$ if $A\lesssim B$ and $A \gtrsim B$. The condition $2r-\alpha_c+\alpha_p-1>0$ is equivalent to $2r-s+1>0$. 
		
		Since as $k\rightarrow\infty$,
		\begin{align*}
			\dfrac{\Gamma(\alpha_p)\Gamma(\alpha_c+k+1)}{\Gamma(\alpha_c+1)\Gamma(\alpha_p+k)} \sim k^{\alpha_c-\alpha_p+1},
		\end{align*}
		we have for $s>0$, 
		\begin{align*}
			\sum_{k\geq0}\dfrac{\Gamma(\alpha_p)\Gamma(\alpha_c+k+1)}{\Gamma(\alpha_c+1)\Gamma(\alpha_p+k)}(1-\beta)^k(1-\beta_0)^k \approx \frac{1}{(1-(1-\beta)(1-\beta_0))^s},
		\end{align*}
		for $s=0$,
		\begin{align*}
			\sum_{k\geq0}\dfrac{\Gamma(\alpha+p)\Gamma(\alpha_c+k+1)}{\Gamma(\alpha_c+1)\Gamma(\alpha_p+k)}(1-\beta)^k(1-\beta_0)^k \approx \ln (1-(1-\beta)(1-\beta_0)),
		\end{align*}
		and for $s<0$,
		\begin{align*}
			\sum_{k\geq0}\dfrac{\Gamma(\alpha+p)\Gamma(\alpha_c+k+1)}{\Gamma(\alpha_c+1)\Gamma(\alpha_p+k)}(1-\beta)^k(1-\beta_0)^k \approx 1.
		\end{align*}
		We first deal with the last case. When $s=\alpha_c-\alpha_p+2<0$, we see $\alpha_p>2$, and 
  \begin{align*}
     &\mathbb{E}\left[\frac{\beta_0(1-\beta)}{\eta_0}\sum_{k\geq0}\dfrac{\Gamma(\alpha_p)\Gamma(\alpha_c+k+1)}{\Gamma(\alpha_c+1)\Gamma(\alpha_p+k)}(1-\beta)^k(1-\beta_0)^k\right]\\
     \approx &\mathbb E\left[\frac{\beta_0(1-\beta)}{\eta_0}\right]
     \leq\mathbb E \left[\frac{1}{\eta_0}\right]\lesssim \mathbb E_{GW}\left[\nu\right] <\infty.
  \end{align*}
		In the next step, we assume $s\geq 0$. Let $s^\prime:=s+\epsilon1_{\{s=0\}}$.  First notice that
		\begin{align*}
 &\mathbb E\left[\frac{\beta_0(1-\beta)}{\eta_0(1-(1-\beta)(1-\beta_0))^{s^\prime}}\right]=\mathbb E\left[\dfrac{\beta_0(\eta_0+\sum_{i=1}^\nu \eta_i\beta_i)^{s^\prime-1}}{(\eta_0\beta_0+\sum_{i=1}^\nu \eta_i\beta_i)^{s^\prime}}\right]\\ 
   \lesssim& \mathbb E\left[\dfrac{\beta_0\eta_0^{s^\prime-1}}{(\eta_0\beta_0+\sum_{i=1}^\nu \eta_i\beta_i)^{s^\prime}}\right]+\mathbb E\left[\dfrac{\beta_0(\eta_0\beta_0+\sum_{i=1}^\nu \eta_i\beta_i)^{s^\prime-1}}{(\eta_0\beta_0+\sum_{i=1}^\nu \eta_i\beta_i)^{s^\prime}}\right]\\
			=& \mathbb E\left[\dfrac{\beta_0\eta_0^{s^\prime-1}}{(\eta_0\beta_0+\sum_{i=1}^\nu \eta_i\beta_i)^{s^\prime}}\right]+\mathbb E\left[\dfrac{\beta_0}{\eta_0\beta_0+\sum_{i=1}^\nu \eta_i\beta_i}\right]=:I_1+I_2,
		\end{align*}
  where we use $1-\beta=\eta_0/(\eta_0+\sum_{i=1}^\nu\eta_i\beta_i)$ for the first equality; $(\eta_0+\sum_{i=1}^\nu\eta_i\beta_i)^{s^\prime-1}\lesssim\eta_0^{s^\prime-1}+(\sum_{i=0}^\nu\eta_i\beta_i)^{s^\prime-1}$ for the second inequality.
  
Under the condition $2r-s+1>0$, we intend to prove that (taking $\epsilon>0$ small enough when $s=0$) $I_1+I_2<\infty$, which implies (\ref{eq_exp_finite}). By discussing whether $\eta_0$ is small, we have
		\begin{align*}
			I_1&\lesssim  \mathbb E\left[\nu^{s^\prime}\dfrac{\beta_0\eta_0^{s^\prime-1}}{(\eta_0\beta_0+\beta_1)^{s^\prime}}1_{\{\eta_0\le 1/2\}}\bigg|\eta_1=\max_{1\leq i\leq\nu}\{\eta_i\}\right]+\mathbb E\left[\dfrac{\beta_0}{(\beta_0+\eta_1\beta_1)^{s^\prime}}1_{\{\eta_0\ge 1/2\}}\right]\\
  &= \mathbb E\left[\nu^{s^\prime}\dfrac{\beta_0\eta_0^{s^\prime-1}}{(\eta_0\beta_0+\beta_1)^{s^\prime}}1_{\{\eta_0\le 1/2\}}\right]+\mathbb E\left[\dfrac{\beta_0}{(\beta_0+\eta_1\beta_1)^{s^\prime}}1_{\{\eta_0\ge 1/2\}}\right] =:I_3+I_4,
		\end{align*}
  where we use the observation that $\{\eta_0\le 1/2\}$ implies that $\{\max_{1\leq i\leq\nu}\eta_i>1/2\nu\}$. 
  
  For $I_3$, it holds that
   \begin{align*}
			I_3\lesssim  \mathbb E\left[\nu^{s^\prime}\dfrac{\beta_0\eta_0^{s^\prime-1}}{(\eta_0\beta_0+\beta_1)^{s^\prime}}\right]\lesssim \mathbb E \left[\nu^{s^\prime}  \frac{1}{\beta_0^{s^\prime-1}\eta_0} \textbf{1}_{\{\beta_1\leq\eta_0\beta_0\}}
			+\nu^{s^\prime} \frac{\beta_0\eta_0^{s^\prime-1}}{\beta_1^{s^\prime}}\textbf{1}_{\{\beta_1>\eta_0\beta_0\}}\right].
		\end{align*}
  Since $\mathbb E[(\beta_1)^{-r+\delta}]<\infty$ for any positive $\delta$ by Lemma \ref{lemma_1overbeta}, $\mathbb P(\beta_1  \le x)\lesssim x^{r-\delta}$ by Markov inequality. The density of $\eta_0\in (0,1)$ is given by
\begin{align}\label{eq_eta_0}
\frac{\d \mathbb P(\eta_0\le w|\nu) }{\d w}=\frac{\Gamma(\alpha_p+\nu\alpha_c)}{\Gamma(\nu\alpha_c)\Gamma(\alpha_p)}w^{\alpha_p-1}(1-w)^{\nu\alpha_c-1}  \lesssim \frac{\Gamma(\alpha_p+\nu\alpha_c)}{\Gamma(\nu\alpha_c)\Gamma(\alpha_p)}w^{\alpha_p-1}\approx\nu^{\alpha_p}w^{\alpha_p-1}
\end{align}
by (\ref{eq_density_Dirichlet}) and Stirling's approximation. Hence, we have
		\begin{align*}
			&\mathbb E\left[\nu^{s^\prime}  \frac{1}{\beta_0^{s^\prime-1}\eta_0} \textbf{1}_{\{\beta_1\leq\eta_0\beta_0\}}\right]=\mathbb E_{GW}\left[\nu^{s^\prime}  \int_{x\le zw} \frac{1}{z^{s^\prime-1}w} \d \mathbb P(\beta_0\le z)\d  \mathbb P(\eta_0 \le w|\nu) \d \mathbb P(\beta_1 \le x)\right] \\
			\lesssim &\mathbb E_{GW}\left[\nu^{s^\prime+\alpha_p}  \int z^{r-s^\prime+1-\delta}w^{r+\alpha_p-\delta-2} \d \mathbb P(\beta_0\le z)\d w\right]\lesssim \mathbb E_{GW}\left[\nu^{s^\prime+\alpha_p}\right]<\infty,
		\end{align*}
	where the last approximation follows from $2r-s'+1>0$ (and thus $r+\alpha_p-1>\alpha_c-r>0$) by taking $\epsilon$ and $\delta$ small enough. Similarly, together with integration by parts, we have
		\begin{align*}
			&\mathbb E\left[\nu^{s^\prime} \frac{\beta_0\eta_0^{s^\prime-1}}{\beta_1^{s^\prime}}\textbf{1}_{\{\beta_1>\eta_0\beta_0\}}\right]=\mathbb E_{GW}\left[\nu^{s^\prime}\int_{x>zw}\frac{zw^{s^\prime-1}}{x^{s^\prime}}  \d \mathbb P(\beta_0\le z)\d \mathbb P(\eta_0 \le w|\nu) \d \mathbb P(\beta_1\le x)\right]\\
			\lesssim 
			&\mathbb E\left[\nu^{s^\prime}\beta_0\eta_0^{s'-1}\right]+\mathbb E_{GW}\left[\nu^{s^\prime}\int_{x>zw} \frac{zw^{s^\prime-1}}{x^{s^\prime+1}} \mathbb P(\beta_1 \le x) \d x  \d \mathbb P(\beta_0\le z)\d \mathbb P(\eta_0 \le w|\nu)\right]\\
			\lesssim&\mathbb E_{GW}\left[ \nu^{s^\prime}\int zw^{s^\prime-1} (z^{r-s^\prime-\delta}w^{r-s^\prime-\delta}\vee 1) \d \mathbb P(\beta_0\le z)\d \mathbb P(\eta_0 \le w|\nu)\right]\lesssim \mathbb E_{GW}\left[\nu^{s^\prime+\alpha_p}\right]<\infty,
		\end{align*}
where we use the fact that $\mathbb E\left[\nu^{s^\prime}\beta_0\eta_0^{s'-1}\right]$ is finite since $\alpha_p+s'-1\ge \alpha_c+1>0$ and $\mathbb E[\nu]\le\mathbb E[\nu^{\alpha_c+3+\alpha_p}]<\infty$.

  For $I_4$, the same deduction yields that 
		\begin{align*}
			&\mathbb E\left[\frac{\beta_0}{(\beta_0+\eta_1\beta_1)^{s^\prime}}\right]\lesssim\mathbb E \left[ \frac{\beta_0}{\beta_1^{s^\prime}\eta_1^{s^\prime}} \textbf{1}_{\{\beta_0\leq\eta_1\beta_1\}}
			+\frac{1}{\beta_0^{s^\prime-1}}\textbf{1}_{\{\beta_0>\eta_1\beta_1\}}\right]\\
   \lesssim & \mathbb  E_{GW}\left[\int w^{r-s^\prime+1-\delta} z^{r-s^\prime+1-\delta}\d\mathbb  P(\beta_1\le z) \d \mathbb P(\eta_1\le w|\nu)\right]\lesssim  \mathbb  E_{GW}\left[\nu^{\alpha_c}\right]<\infty,
		\end{align*}
	since $\mathbb E[(\beta_1)^{-r+\delta}]<\infty$,
  \begin{align}\label{eq_eta_1}
       \frac{\d\mathbb P(\eta_1\leq w|\nu)}{\d w}\lesssim \frac{\Gamma(\alpha_p+\nu\alpha_c)}{\Gamma(\alpha_c)\Gamma((\nu-1)\alpha_c+\alpha_p)}w^{\alpha_c-1}\approx\nu^{\alpha_c}w^{\alpha_c-1}
  \end{align}
  and $2r-s'+1>0$ (which implies $\alpha_c+r-s'+1>0$ due to $\alpha_c>r$).
		
		We then deal with $I_2$. We claim that $I_2$ is integrable under the condition $2r-\alpha_c+\alpha_p-1>0$. In fact, if $r>0$, we see $2r-1+1>0$ and $r+\alpha_p-1>\alpha_c-r>0$, By plugging $s^\prime=1$ into $I_1$, we obtain $I_2<\infty$ immediately. Note that when $s^\prime=1$, we have $\mathbb E_{GW}\left[\nu^{1+\alpha_p}\right]\le \mathbb E_{GW}\left[\nu^{3+\alpha_c+\alpha_p}\right]<\infty$. If $r=0$, we have $\nu\equiv1$ and $\alpha_c\le\alpha_p$, which falls into the region of recurrence.
  
  Now we continue to prove that $2r-s+1>0$ is necessary by contradiction. It is true even if the condition (\ref{eq_moment_condition}) is dropped. Assume that $2r-s+1\le 0$ (which implies $s\ge 1$). It yields that 
  \begin{align*}
     &\mathbb{E}\left[\frac{\beta_0(1-\beta)}{\eta_0}\sum_{k\geq0}\dfrac{\Gamma(\alpha_p)\Gamma(\alpha_c+k+1)}{\Gamma(\alpha_c+1)\Gamma(\alpha_p+k)}(1-\beta)^k(1-\beta_0)^k\right]\\
     \approx& \mathbb E\left[\dfrac{\beta_0(\eta_0+\sum_{i=1}^\nu \eta_i\beta_i)^{s-1}}{(\eta_0\beta_0+\sum_{i=1}^\nu \eta_i\beta_i)^{s}}\right]
     \gtrsim  \mathbb E\left[\dfrac{\beta_0\eta_0^{s-1}}{(\eta_0\beta_0+\sum_{i=1}^\nu \eta_i\beta_i)^{s}}\right]=I_1
  \end{align*}
  
If $s=1$, $r=0$, which implies recurrence. If $s>1$, since event $\{\nu=1\}$ has positive probability, we have
		\begin{align*}
			I_1&\gtrsim\int \frac{zw^{s-1}}{(zw+x)^{s}} \d \mathbb P(\beta_0\le z)\d \mathbb P(\eta_0 \le w) \d \mathbb P(\beta_1 \le x)\\
			&\gtrsim \int \dfrac{w^{s-1}}{(1+w)^{s}}\d \mathbb P(\eta_0 \le w)\int_{x<z} z^{-s+1}\d \mathbb P(\beta_0\le z) \d \mathbb P(\beta_1 \le x)\\
			&\gtrsim \mathbb E\left[\beta_1^{-s+1}\wedge\beta_0^{-s+1}\right]\gtrsim\int_{z\ge 1}z^{s-2}\mathbb P({\beta_1}^{-1}\wedge{\beta_0}^{-1}\geq z)\d z= \int_{z\le 1} \mathbb P(\beta_0\le z )^2z^{-s}\d z, 
		\end{align*}
		since $\beta_0$ and $\beta_1$ have the same law. On the other hand, as is shown in  (\ref{eq_1/beta_ge_x}), there is a positive constant $C$ such that
		\begin{align*}
			\liminf_{x\rightarrow \infty}\frac{\mathbb P({\beta}^{-1}>x)}{x^{-r}}\ge C.
		\end{align*}
		Therefore, $\int_{z\le 1} \mathbb P^2(\beta_0\le z) z^{-s}\d z=\infty$ since $2r-s+1\le 0$, which implies that \eqref{eq_exp_finite} does not hold. 
	
	\end{proof}
	We continue to deal with the case when $p_0=0$, $p_1=0$. Recall $d:=\min\{n\ge 1: p_n>0\}$ and $\mathbb E[\beta^{-k}]<\infty$ if and only if $k<d\alpha_c$ by Lemma \ref{lemma_dgreater1}.
	\begin{proposition}\label{01=0}
		For a transient $(\alpha_p,\alpha_c)$-ERRW, when $p_0=0$, $p_1=0$, and (\ref{eq_moment_condition}) holds,
		\begin{align*}
			\mathbb{E}\left[\frac{\beta_0(1-\beta)}{\eta_0}\sum_{k\geq0}\dfrac{\Gamma(\alpha_p)\Gamma(\alpha_c+k+1)}{\Gamma(\alpha_c+1)\Gamma(\alpha_p+k)}(1-\beta)^k(1-\beta_0)^k\right]<\infty,
		\end{align*}
		if and only if $(2d-1)\alpha_c+\alpha_p-1> 0$.
	\end{proposition}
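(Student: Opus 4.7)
The plan is to mimic the proof of Proposition \ref{01>0} essentially verbatim, with three systematic substitutions. First, translate the condition: $(2d-1)\alpha_c+\alpha_p-1>0$ is equivalent to $2d\alpha_c-s+1>0$ where $s:=\alpha_c-\alpha_p+2$, so the number $d\alpha_c$ plays the role that $r$ played in Proposition \ref{01>0}. Second, since $p_0=p_1=0$ implies $f'(q)=0$ and hence Lemma \ref{lemma_1overbeta} cannot be invoked, I will use Lemma \ref{lemma_dgreater1} throughout in its place; this gives $\mathbb E[\beta^{-k}]<\infty$ (and by Markov, $\mathbb P(\beta\le z)\lesssim z^{d\alpha_c-\delta}$) exactly when $k<d\alpha_c$. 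Third, the event $\{\nu=1\}$ that drove the necessity in Proposition \ref{01>0} is replaced by $\{\nu=d\}$, which has probability $p_d>0$ since $d$ is by definition the minimal offspring value.

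For sufficiency, the reduction via the Gamma asymptotics
$$\dfrac{\Gamma(\alpha_p)\Gamma(\alpha_c+k+1)}{\Gamma(\alpha_c+1)\Gamma(\alpha_p+k)}\sim k^{s-1}$$
is unchanged, and splits the analysis into $s<0$, $s=0$, $s>0$. In each case I would run the same decomposition $\mathscr C\lesssim I_1+I_2$ (with the further split $I_1\lesssim I_3+I_4$ according to $\eta_0\le 1/2$ or $\eta_0\ge 1/2$) as in Proposition \ref{01>0}. The only substantive modification is to use $\mathbb P(\beta\le z)\lesssim z^{d\alpha_c-\delta}$ wherever Proposition \ref{01>0} used $\mathbb P(\beta\le z)\lesssim z^{r-\delta}$. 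The condition $2d\alpha_c-s+1>\delta$ (which is strict under $(2d-1)\alpha_c+\alpha_p-1>0$ for $\delta$ small) then ensures exactly the integrability of the $\int z^{d\alpha_c-s'+1-\delta}\,d\mathbb P(\beta_0\le z)$ factor, via $\mathbb E[\beta^{-(s'-1-d\alpha_c+\delta)}]<\infty$. The Dirichlet density estimates \eqref{eq_eta_0} and \eqref{eq_eta_1} carry over unchanged, and the moment condition \eqref{moment condition} with $K=d\alpha_c+3+\alpha_p$ handles all the $\nu^{\,\cdot}$ prefactors, since the worst exponent encountered, $s'+\alpha_p\le \alpha_c+2+\epsilon+\alpha_p$, is comfortably $\le K$ for $d\ge 2$.

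For necessity I assume $(2d-1)\alpha_c+\alpha_p-1\le 0$, i.e., $s\ge 2d\alpha_c+1\ge 1$. The essential ingredient is the tail lower bound
$$\mathbb P(\beta\le z)\gtrsim z^{d\alpha_c}\qquad(z\to 0),$$
which replaces the Kesten–Grincevicius–Goldie asymptotic \eqref{eq_1/beta_ge_x} used in Proposition \ref{01>0}. I will obtain it directly: from \eqref{eq_recur} and $\beta_i\le 1$ we have $\beta\le\sum_{i=1}^{\nu}A_i=(1-\eta_0)/\eta_0$, so on $\{\nu=d,\,1-\eta_0\le z\}$ we have $\beta\le 2z$. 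The Dirichlet density of $\eta_0$ conditional on $\nu=d$ behaves like $(1-\eta_0)^{d\alpha_c-1}$ near $1$, whence $\mathbb P(1-\eta_0\le z\mid\nu=d)\gtrsim z^{d\alpha_c}$ and the bound follows. Having this, I replay the chain of inequalities at the end of the proof of Proposition \ref{01>0}: restrict the expectation to $\{\nu=d\}$, lower bound the denominator using $\sum_{i=1}^d\eta_i\beta_i\le 1-\eta_0$ to obtain a $(1+w)$-type factor, and integrate over $y<z$ as before to arrive at a bound of the form $\mathbb E[\beta_0^{-(s-1)}\wedge\beta^{-(s-1)}]\gtrsim\int_{z\le 1}\mathbb P(\beta_0\le z)^2 z^{-s}\,\d z$, the two $\beta$-factors coming from the independent $\mathbb T^-$ and $\mathbb T^+$ sides of the double tree. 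The lower tail estimate then yields $\gtrsim\int_0^1 z^{2d\alpha_c-s}\,\d z=\infty$ exactly when $2d\alpha_c-s+1\le 0$.

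The main obstacle I expect is in the necessity step, where I need to reduce the denominator $\eta_0\beta_0+\sum_{i=1}^{d}\eta_i\beta_i$ to a two-variable form of the same type as in Proposition \ref{01>0} (involving one copy of $\beta_0$ and one copy of a quantity with tail $z^{d\alpha_c}$). Bounding $\sum_{i=1}^d\eta_i\beta_i$ by $\max_i\beta_i$ would introduce $d$ factors of $\beta$ and give the wrong (too strong) condition $(d+1)d\alpha_c-s+1\le 0$; the right move is to use the identity $\sum_{i=1}^\nu\eta_i\beta_i=\eta_0\beta/(1-\beta)$ so that only the total conductance $\beta$ of $\mathbb T^+$ enters, and $\beta$ is independent of $\beta_0$ with the correct tail $z^{d\alpha_c}$. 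Handling the dependence between $\eta_0$ and $\beta$ (both functionals of $\mathbb T^+$) is the technical heart of the argument and will be managed by restricting to $\{1-\eta_0\le \epsilon,\nu=d\}$, on which $\eta_0\approx 1$, $\beta\le\epsilon$, and the probability of the event is $\gtrsim \epsilon^{d\alpha_c}$, then combining with the independent tail of $\beta_0$.
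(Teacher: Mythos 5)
Your high-level plan---treat $d\alpha_c$ as the analogue of $r$, invoke Lemma~\ref{lemma_dgreater1} in place of Lemma~\ref{lemma_1overbeta}, and restrict to $\{\nu=d\}$ instead of $\{\nu=1\}$ for the necessity---correctly identifies most of the structural parallels with Proposition~\ref{01>0}. However, there is a genuine gap in your sufficiency argument, and your necessity argument is substantially more involved than what the paper actually does.

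\textbf{Gap in sufficiency.} You claim that ``the only substantive modification is to use $\mathbb P(\beta\le z)\lesssim z^{d\alpha_c-\delta}$'' in the same decomposition $I_1\lesssim I_3+I_4$ from Proposition~\ref{01>0}. This does not work. In that decomposition, the denominator $\sum_{i=1}^\nu\eta_i\beta_i$ is collapsed to the single contribution $\eta_1\beta_1$ of the maximal $\eta_i$, and the bound $\eta_1\ge 1/(2\nu)$ is used to absorb the factor of $\eta_1$ into a $\nu$-power. After this, the $w$-integral over $\eta_0$ in the $I_3$-type term is $\int_0^1 w^{r+\alpha_p-\delta-2}\,\d w$, which converges exactly because $r+\alpha_p-1>\alpha_c-r>0$; the second inequality uses $r<\alpha_c$. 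Once you replace $r$ by $d\alpha_c$, you would need $d\alpha_c+\alpha_p-1>0$ for that $w$-integral (and analogously $(d+1)\alpha_c-s'+1>0$, i.e.\ the same condition, for the $\eta_1$-integral in the $I_4$-type term). But $d\alpha_c>\alpha_c$ when $d\ge 2$, so the chain of inequalities $d\alpha_c+\alpha_p-1>\alpha_c-d\alpha_c$ no longer yields positivity, and $(2d-1)\alpha_c+\alpha_p-1>0$ does \emph{not} imply $d\alpha_c+\alpha_p-1>0$. For instance $d=3$, $\alpha_c=0.1$, $\alpha_p=0.6$ has $(2d-1)\alpha_c+\alpha_p-1=0.1>0$ but $d\alpha_c+\alpha_p-1=-0.1<0$. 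The paper's actual sufficiency proof avoids this by \emph{not} collapsing the sum: it keeps $\beta_1+\sum_{i=2}^d\eta_i\beta_i$ in the denominator, and then, instead of extracting $\eta_1\ge 1/(2\nu)$, performs a change of measure to independent variables $\bar\eta_0,\bar\eta_2,\dots,\bar\eta_d$ whose densities are $\propto w^{\alpha_p-1}$ and $\propto w^{\alpha_c-1}$ respectively. This decoupling lets one use the tail bound $\mathbb P\bigl(\beta_1+\sum_{i=2}^d\bar\eta_i\beta_i<x\bigr)\lesssim x^{(2d-1)\alpha_c-\delta}$, which is precisely where the exponent $(2d-1)\alpha_c$ enters the argument and matches the hypothesis $(2d-1)\alpha_c+\alpha_p-1>0$.

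\textbf{Necessity: a different route, and an overcomplication.} Your direct derivation of $\mathbb P(\beta\le z)\gtrsim z^{d\alpha_c}$ from \eqref{eq_recur} and the Dirichlet density of $\eta_0$ on $\{\nu=d\}$ is correct and elementary. However, the paper's necessity proof does not need this tail bound at all, nor the two-$\beta$ structure $\int\mathbb P(\beta_0\le z)^2z^{-s}\,\d z$. It lower bounds the whole expression by $\mathbb E[(1-\beta)/\eta_0]$ (observing that $\beta_0\sum_k\Phi(k)(1-\beta)^k(1-\beta_0)^k$ is bounded below by a positive constant on a positive-probability event depending only on $\mathbb T^-$), rewrites $(1-\beta)/\eta_0=1/(\eta_0+\sum_i\eta_i\beta_i)$, restricts to $\{\nu=d\}$, and bounds the denominator by $(1-\eta_1)+\beta_1$. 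Integrating the density of $1-\eta_1$ given $\nu=d$ reduces the divergence to $\int_{y\le 1/2}y^{(d-1)\alpha_c+\alpha_p-1}\,\d\mathbb P(\beta_1\le y)=\infty$, which is exactly Lemma~\ref{lemma_dgreater1} applied to the single variable $\beta_1$ when $(d-1)\alpha_c+\alpha_p-1\le -d\alpha_c$. This sidesteps entirely the dependence between $\eta_0$ and $\beta$ on $\mathbb T^+$, which you flag as ``the technical heart'' of your plan and address only by a sketchy restriction to $\{1-\eta_0\le\epsilon,\nu=d\}$. Your route may be salvageable, but it is not the same as the paper's, and the dependence issue you raise remains an unresolved step in your writeup.
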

	\begin{proof}
		Follow the notation in Proposition \ref{01>0}. Recall that $1-\beta=\eta_0/(\eta_0+\sum_{i=1}^\nu\eta_i\beta_i)$ and 
  \begin{align*}
     \frac{\d \mathbb P(1-\eta_1\leq x|\nu)}{\d x}=\frac{\Gamma(\alpha_p+\nu\alpha_c)}{\Gamma(\alpha_c)\Gamma((\nu-1)\alpha_c+\alpha_p)}x^{(\nu-1)\alpha_c+\alpha_p-1}(1-x)^{\alpha_c-1}. 
  \end{align*}  
  To prove the necessity, suppose $(2d-1)\alpha_c+\alpha_p-1\leq0$, we see 
		\begin{align*}
			& \mathbb{E}\left[\frac{\beta_0(1-\beta)}{\eta_0}\sum_{k\geq0}\dfrac{\Gamma(\alpha_p)\Gamma(\alpha_c+k+1)}{\Gamma(\alpha_c+1)\Gamma(\alpha_p+k)}(1-\beta)^k(1-\beta_0)^k\right]\\
   \gtrsim &\mathbb  E\left[\frac{1-\beta}{\eta_0}\right]\gtrsim \mathbb E\left[\frac{\textbf{1}_{\{\nu=d\}}}{\eta_0+\sum_{i=1}^d \eta_i\beta_i}\right]\\
			\gtrsim & \int \frac{1}{x+y}\d \mathbb P(\eta_0+\sum_2^d\eta_i\le x|\nu=d )\mathbb \d \mathbb P(\beta_1\le y)\\
   \gtrsim & \int_{x\le y\leq 1/2} \frac{1}{y}\d \mathbb P(1-\eta_1\le x |\nu=d)\mathbb \d \mathbb P(\beta_1\le y)\\
			\gtrsim & \int_{y\leq1/2} y^{(d-1)\alpha_c+\alpha_p-1}\d \mathbb P(\beta_1\le y)=\infty
		\end{align*}
  since $(d-1)\alpha_c+\alpha_p-1\leq -d\alpha_c$. 
  
Following the proof in Proposition \ref{01>0}, we only need to prove for sufficiency that when $(2d-1)\alpha_c+\alpha_p-1> 0$ and $\mathbb E\left[\nu^{K}\right]<\infty$ (recall that $K=d\alpha_c+3+\alpha_p$),
  \begin{align*}
      I_1=\mathbb E\left[\dfrac{\beta_0\eta_0^{s^\prime-1}}{(\eta_0\beta_0+\sum_{i=1}^\nu \eta_i\beta_i)^{s^\prime}}\right]<\infty
  \end{align*}
where $s^\prime=s+\textbf{1}_{\{s=0\}}\epsilon$. (Note that $I_2$ is finite automatically by taking $s^\prime=1$ in $I_1$.) By separating the case $\eta_0> 1/2$ and $\eta_0\le 1/2$, we have by symmetry of $\eta_i,1\le i\le \nu$,
  \begin{align*}
      I_1&\lesssim\mathbb E\left[\dfrac{\beta_0\textbf{1}_{\{\eta_0>1/2\}}}{(\beta_0+\sum_{i=1}^\nu \eta_i\beta_i)^{s^\prime}}\right]+\mathbb E\left[\nu\dfrac{\beta_0\eta_0^{s^\prime-1}\textbf{1}_{\{\eta_1=\max\{\eta_i,1\le i\le \nu\},\eta_0<1/2 \}}}{(\beta_0\eta_0+\sum_{i=1}^\nu \eta_i\beta_i)^{s^\prime}}\right]\\
      &\lesssim\mathbb E\left[\dfrac{\beta_0\textbf{1}_{\{\eta_0>1/2\}}}{(\beta_0+\sum_{i=1}^\nu \eta_i\beta_i)^{s^\prime}}\right]+\mathbb E\left[\nu^{s^\prime+1}\dfrac{\beta_0\eta_0^{s^\prime-1}\textbf{1}_{\{\eta_1=\max\{\eta_i,1\le i\le \nu\},\eta_0<1/2 \}}}{(\beta_0\eta_0+\beta_1+\sum_{i=2}^\nu \eta_i\beta_i)^{s^\prime}}\right]\\
      &\lesssim\mathbb E\left[\dfrac{\beta_0\textbf{1}_{\{\eta_0>1/2\}}}{(\beta_0+\sum_{i=1}^\nu \eta_i\beta_i)^{s^\prime}}\right]+\mathbb E\left[\nu^{s^\prime+1}\dfrac{\beta_0\eta_0^{s^\prime-1}\textbf{1}_{\{\eta_1>1/(2\nu)\}}}{(\beta_0\eta_0+\beta_1+\sum_{i=2}^\nu \eta_i\beta_i)^{s^\prime}}\right]\\
      &\lesssim \mathbb E\left[\dfrac{\beta_0\textbf{1}_{\{\eta_0>1/2\}}}{(\beta_0+\sum_{i=1}^d \eta_i\beta_i)^{s^\prime}}\right]+\mathbb E\left[\nu^{s^\prime+1}\dfrac{\beta_0\eta_0^{s^\prime-1}\textbf{1}_{\{\eta_1>1/(2\nu)\}}}{(\beta_0\eta_0+\beta_1+\sum_{i=2}^d \eta_i\beta_i)^{s^\prime}}\right]=:I_3+I_4.
  \end{align*}
For $I_4$, it holds by the definition of Dirichlet distribution (\ref{eq_density_Dirichlet}) that
\begin{align*}
     &\mathbb E\left[\nu^{s^\prime+1}\dfrac{\beta_0\eta_0^{s^\prime-1}\textbf{1}_{\{\eta_1>1/(2\nu)\}}}{(\beta_0\eta_0+\beta_1+\sum_{i=2}^d \eta_i\beta_i)^{s^\prime}}\right]\\
    \lesssim &\mathbb E\Bigg[\nu^{s^\prime+1}\dfrac{\Gamma(\alpha_p+\nu\alpha_c)}{\Gamma((\nu-d+1)\alpha_c)}\int_{1-w-\sum_{i=2}^d x_i\ge \frac{1}{2\nu}}\dfrac{\beta_0w^{s^\prime-1}}{(\beta_0w+\beta_1+\sum_{i=2}^dx_i\beta_i)^{s^\prime}}\\
    &[(\frac{1}{2\nu})^{(\nu-d+1)\alpha_c-1 }\vee 1]w^{\alpha_p-1}\d w \prod_{i=2}^d (x_i^{\alpha_c-1}\d x_i) \Bigg]\\
     \approx&\mathbb E\left[\nu^{s^\prime+1+(d-1)\alpha_c+\alpha_p}\dfrac{\beta_0\bar\eta_0^{s^\prime-1}}{(\beta_0\bar \eta_0+\beta_1+\sum_{i=2}^d \bar\eta_i\beta_i)^{s^\prime}}\right],
\end{align*}
where the density of $\bar{\eta}_0$ is proportional to $w^{\alpha_p-1},0<w<1$ and that of $\bar{\eta}_i, 2\le i\le d$ are proportional to $w^{\alpha_c-1},0<w<1$. The random variables $\bar\eta_i,2\le i\le d$, and $\bar\eta_0$ are independent of each other and of other random variables. In the first inequality, we use the fact that when $(\nu-d+1)\alpha_c-1<0$, on the event that $\{\eta_1>1/(2\nu)\}$,
\begin{align*}
    \left(1-w-\sum_{i=2}^d x_i\right)^{(\nu-d+1)\alpha_c-1}\le \left(\frac{1}{2\nu}\right)^{(\nu-d+1)\alpha_c-1}.
\end{align*}
Note that by independence
\begin{align*}
    \mathbb P\left(\beta_1+\sum_{i=2}^d \bar \eta_i\beta_i<x\right)\le \mathbb P(\max\{\beta_1,\bar \eta_2\beta_2,\dots\bar \eta_d\beta_d \}<x)= \mathbb P(\beta_1\le x)\prod_{i=2}^d \mathbb P({\bar\eta_i}\beta_i\le x).
\end{align*}
Thus we have $\mathbb P(\beta_1+\sum_{i=2}^d \bar \eta_i\beta_i<x)\lesssim  x^{(2d-1)\alpha_c-\delta}$ from $\mathbb P(\beta\le y)\lesssim y^{d\alpha_c-\delta}$ and $\mathbb P({\bar\eta_i}\beta_i<x)=\mathbb E[\mathbb P(\bar\eta_i<x/\beta_i|\beta_i)]\lesssim   x^{\alpha_c},2\leq i\leq d$. Therefore, it holds by the same deduction as in Proposition \ref{01>0} that
\begin{align*}
    &\mathbb E\left[\nu^{s^\prime+1+(d-1)\alpha_c+\alpha_p}\dfrac{\beta_0\bar \eta_0^{s^\prime-1}}{(\beta_0\bar \eta_0+\beta_1+\sum_{i=2}^d \bar\eta_i\beta_i)^{s^\prime}}\right]\\
    \lesssim & \mathbb E_{GW}\left[\nu^{s^\prime+1+(d-1)\alpha_c+\alpha_p}\int\dfrac{zw^{s^\prime-1}}{(zw+x)^{s^\prime}}\d \mathbb P(\beta_0\le z)\d \mathbb P(\bar \eta_0\le w)\d \mathbb P(\beta_1+\sum_{i=2}^d \bar\eta_i\beta_i\le x|\nu) \right]\\
    \lesssim &\mathbb E_{GW}\left[\nu^{d\alpha_c+3+\epsilon}\int z^{(2d-1)\alpha_c-\delta+1-s-\epsilon} w^{(2d-1)\alpha_c-\delta+\alpha_p-2}\d \mathbb P(\beta_0\le z)\d w\right]
\end{align*}
which is finite when $(2d-1)\alpha_c+\alpha_p-1>0$ (and thus $(3d-2)\alpha_c+\alpha_p-1>0$) and $\mathbb E[\nu^{d\alpha_c+3+\epsilon}]\le\mathbb E[\nu^{d\alpha_c+3+\alpha_p}]<\infty$ when $\epsilon$ small enough.

We then deal with $I_3$ in the same way. By the substitution mentioned above,  
\begin{align*}
    \mathbb E\left[ \dfrac{\beta_0\textbf{1}_{\{\eta_0>1/2\}}}{(\beta_0+\sum_{i=1}^d \eta_i\beta_i)^{s^\prime}}\right]\lesssim \mathbb E_{GW}\left[\nu^{d\alpha_c}\int\dfrac{z}{(z+x)^{s^\prime}}\d \mathbb P(\beta_0\le z)\d \mathbb P(\sum_{i=1}^d{\bar\eta_i}\beta_i\le x)\right].
\end{align*}
Recall that $\mathbb P(\beta\le y)\lesssim y^{d\alpha_c-\delta}$ by Lemma \ref{lemma_dgreater1}  and $\mathbb P(\bar\eta_i\le x_i)\lesssim  x_i^{\alpha_c}$ by definition. Then we have $\mathbb P(\sum_{i=1}^d{\bar\eta_i}\beta_i<x)\leq \mathbb P(\max_{1\leq i\leq d}\{{\bar\eta_i}\beta_i\}<x)\lesssim   x^{d\alpha_c}$ from $\mathbb P({\bar\eta_i}\beta_i<x)=\mathbb E[\mathbb P(\bar\eta_i<x/\beta_i|\beta_i)]\lesssim  x^{\alpha_c},1\le i\le d$ and independence of $(\bar\eta_i\beta_i)_{1\le i\le d}$. Following the proof of Proposition \ref{01>0} , for some $\delta>0$ small enough, 
\begin{align*}
    \mathbb E_{GW}\left[\nu^{d\alpha_c}\int\dfrac{z}{(z+x)^{s^\prime}}\d \mathbb P(\beta_0\le z)\d \mathbb P(\sum_{i=1}^d{\bar\eta_i}\beta_i\le x)\right]\lesssim  \mathbb E_{GW}\left[\nu^{d\alpha_c}\int z^{d\alpha_c+1-s^\prime}\d\mathbb P(\beta_0\le z) \right],
\end{align*}
which is finite when $\mathbb E[\nu^{d\alpha_c}]<\infty$ and $(2d-1)\alpha_c+\alpha_p-1>0$. 
\end{proof}
	
	Finally, we assume that $p_0>0$. Define
	\begin{align*}
		r:=\sup\{k:{\mathbb E}[A^{-k}]f^\prime(q)<1\}.
	\end{align*}
	We also have $0<r<\alpha_c$ from $f^\prime(q)<1$.
	
	\begin{proposition}\label{0>0}
		For a transient $(\alpha_p,\alpha_c)$-ERRW, when $p_0>0$ and (\ref{eq_moment_condition}) holds, 
		\begin{align}\label{eq_exp_p0>0}
			\mathbb E\left[\frac{\beta_0(1-\beta)}{\eta_0}\sum_{k\geq0}\dfrac{\Gamma(\alpha_p)\Gamma(\alpha_c+k+1)}{\Gamma(\alpha_c+1)\Gamma(\alpha_p+k)}(1-\beta)^k(1-\beta_0)^k \Bigg| \mathcal{S}_{\mathbb{T}^-}\right]<\infty,
		\end{align}
		if and only if $r-\alpha_c+\alpha_p-1>0$.
	\end{proposition}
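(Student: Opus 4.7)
My plan for Proposition \ref{0>0} is to imitate the scheme of Proposition \ref{01>0} with one essential new ingredient: since $p_0>0$, the event $\{\mathbb T^+\text{ is finite}\}$ has positive probability, and on it all $\beta(\rho^+ i)$ (hence $\beta(\rho^+)$) vanish. Setting $s:=\alpha_c-\alpha_p+2$, so that the criterion $r-\alpha_c+\alpha_p-1>0$ is the same as $r-s+1>0$, I would first note the useful consistency check that $r<\alpha_c$ (since $\mathbb E[A^{-k}]=\infty$ for $k\ge \alpha_c$), so $r-s+1>0$ forces $\alpha_p>1$, a fact that will be needed below. For $s>0$ I would then rewrite the integrand in the form used in Proposition \ref{01>0},
\[
\frac{\beta_0(1-\beta)}{\eta_0}\,F\bigl((1-\beta)(1-\beta_0)\bigr)\asymp \frac{\beta_0\,(\eta_0+\sum_{i=1}^\nu \eta_i\beta_i)^{s-1}}{(\eta_0\beta_0+\sum_{i=1}^\nu \eta_i\beta_i)^{s}},
\]
and split the conditional expectation $\mathbb E[\,\cdot\,|\,\mathcal S_{\mathbb T^-}]$ according to whether $\mathbb T^+$ is finite or infinite.

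On $\{\mathbb T^+\text{ finite}\}$, all $\beta_i=0$, so the integrand collapses to $\beta_0 F(1-\beta_0)/\eta_0\asymp\beta_0^{1-s}/\eta_0$ for $s>0$. Since $\beta_0$ depends only on $\mathbb T^-$ and its environment while $(\nu,\eta_0)$ live on the $\mathbb T^+$ side, this contribution factorises up to constants as
\[
\mathbb E\bigl[\beta_0^{-(s-1)}\,\big|\,\mathcal S_{\mathbb T^-}\bigr]\;\mathbb E\!\left[\frac{q^{\nu}}{\eta_0}\right].
\]
When $\alpha_p>1$, the second factor is harmless: \eqref{eq_eta_0} gives $\mathbb E[\eta_0^{-1}\,|\,\nu]\lesssim \nu^{\alpha_p}$, and $q^\nu\nu^{\alpha_p}$ is summable under \eqref{eq_moment_condition} since $q<1$. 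The first factor is finite iff $s-1<r$, i.e.\ iff $r-s+1>0$, by Lemma \ref{lemma_1overbeta}. This sub-case also delivers the necessity direction: if $r-s+1\le 0$ (which forces $s>1$), then $\mathbb E[\beta_0^{-(s-1)}\,|\,\mathcal S_{\mathbb T^-}]=\infty$ while $\mathbb E[q^\nu/\eta_0]>0$, so the restriction of the original integral to the positive-probability event $\{\mathbb T^+\text{ finite}\}$ already diverges.

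On $\{\mathbb T^+\text{ infinite}\}$, I would replay the $I_1+I_2$ decomposition of Proposition \ref{01>0} essentially verbatim. The only tree inputs are tail estimates $\mathbb P(\beta_i\le y)\lesssim y^{r-\delta}$ for children $i$ with infinite subtree (Lemma \ref{lemma_1overbeta}); children with finite subtree simply drop out of $\sum\eta_i\beta_i$. The constraint appearing in the $01>0$ estimates is $2r-s+1>0$, which is strictly implied by $r-s+1>0$ because $r>0$ (here $f'(q)<1$ since $q$ is the smaller root of $f(s)=s$, hence $0$ lies in $\{k:\mathbb E[A^{-k}]f'(q)<1\}$). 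The degenerate regimes $s=0$ (logarithmic sum) and $s<0$ (bounded sum) are handled as in Proposition \ref{01>0}: the integrand is dominated by $1/\eta_0$, finite in expectation under the moment condition, and the inequality $r-s+1>0$ is automatic since $s-1<0<r$.

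The main obstacle I anticipate is bookkeeping rather than conceptual: tracking the exponent of $\nu$ through each sub-estimate (especially on $\{\eta_0\le 1/2\}$, using $\eta_1=\max_i\eta_i$) and verifying that \eqref{eq_moment_condition} is just enough. A secondary delicate point is that on $\{\mathbb T^+\text{ infinite}\}$ one may want to condition further on the reduced offspring number $\mu$, for example on $\{\mu=1\}$ (probability proportional to $f'(q)$ by the structure of $\mathbb T_{\mathcal S}$), so as to isolate a single factor $\eta_I\beta_I$ in the denominator with a clean tail bound; this should follow exactly as in Proposition \ref{01>0}.
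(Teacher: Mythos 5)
Your proposal identifies the right tight case and the right criterion, but the route you take on the sufficiency side is more complicated than necessary and has a real gap.

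The paper's sufficiency argument is a single crude bound: $(1-\beta)^{k+1}\le 1$ uniformly, which kills all dependence on $\mathbb T^+$ beyond $\eta_0$ and reduces the whole conditional expectation to
\[
\mathbb E\!\left[\eta_0^{-1}\right]\cdot\mathbb E\!\left[\beta_0\,F(1-\beta_0)\,\big|\,\mathcal S_{\mathbb T^-}\right]
\lesssim \mathbb E\!\left[\eta_0^{-1}\right]\cdot\mathbb E\!\left[\beta_0^{1-s'}\,\big|\,\mathcal S_{\mathbb T^-}\right],
\]
finite iff $\alpha_p>1$ (forced by $r-s+1>0$ and $r<\alpha_c$) and $s'-1<r$. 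This is exactly your $\{\mathbb T^+\text{ finite}\}$ computation, but applied as an \emph{upper bound everywhere}, so the case split you introduce is unnecessary. The paper's necessity argument restricts to $\{\nu=0\}$ (even simpler than your $\{\mathbb T^+\text{ finite}\}$, since then $\eta_0\equiv1$ and the factor $\mathbb E[q^\nu/\eta_0]$ never appears), but your version also works.

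The genuine gap is in your proposed treatment of $\{\mathbb T^+\text{ infinite}\}$. The claim that the $I_1+I_2$ decomposition of Proposition~\ref{01>0} transports ``essentially verbatim'' does not survive scrutiny: that proof conditions on $\eta_1=\max_{1\le i\le\nu}\eta_i$ (justified by the exchangeability of the Dirichlet coordinates) and then uses the tail estimate of $\beta_1$ to control the denominator $\eta_0\beta_0+\sum_i\eta_i\beta_i$ from below by something involving $\beta_1$. When $p_0>0$, the child attaining the maximal $\eta_i$ may well root a finite subtree, so $\beta_1=0$ and the lower bound on the denominator disappears. You acknowledge this implicitly by suggesting to condition on $\mu=1$ and isolate the unique surviving child $I$, but then the Dirichlet density of $\eta_I$ is no longer that of a generic coordinate and the exchangeability argument has to be redone; the $\nu^{s'}$ bookkeeping and the $\{\eta_0\le 1/2\}$ bound are not verbatim either. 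None of this is fatal — the estimate $2r-s+1>0$ would eventually suffice — but ``essentially verbatim'' hides work that would need to be done, and all of it is moot once you notice the uniform bound $(1-\beta)\le 1$ already gives what you need.
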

	\begin{proof}
	Let us first deal with the `if' part. The inequality (\ref{eq_exp_p0>0}) holds if
		\begin{align*}
			\mathbb  E\left[\frac{\beta_0}{\eta_0}\sum_{k\geq0}\dfrac{\Gamma(\alpha_p)\Gamma(\alpha_c+k+1)}{\Gamma(\alpha_c+1)\Gamma(\alpha_p+k)}(1-\beta_0)^k \right]<\infty.
		\end{align*}
		If $s=\alpha_c-\alpha_p+2<0$, then the summation is bounded and we only need to ensure that $\mathbb E[1/\eta_0]<\infty$. It is naturally satisfied since $r-\alpha_c+\alpha_p-1>0$ (which implies $\alpha_p>1$ by $r<\alpha_c$), and $\mathbb E_{GW}[\nu]<\infty$. When $s\geq 0$, let us consider $s'=s+\epsilon\textbf{1}_{\{s=0\}}>0$ for $\epsilon>0$ small enough. We see
		\begin{align*}
			\mathbb  E\left[\frac{\beta_0}{\eta_0}\sum_{k\geq0}\dfrac{\Gamma(\alpha+p)\Gamma(\alpha_c+k+1)}{\Gamma(\alpha_c+1)\Gamma(\alpha_p+k)}(1-\beta_0)^k \right]\lesssim\mathbb  E[\frac{\beta_0}{\eta_0(1-(1-\beta_0))^{s^\prime}}]={\mathbb E}\left[\eta_0^{-1}\right]\mathbb E\left[\beta_0^{1-s^\prime}\right]
		\end{align*}
is finite when $r-\alpha_c+\alpha_p-1>0$ (which implies $\alpha_p>1$ and $1-s'>-r$) and $\mathbb E_{GW}[\nu]<\infty$.

For the `only if ' part, when $r-\alpha_c+\alpha_p-1\le 0$, we have $s\geq r+1>0$.  Since $p_0>0$, $\mathbb P(\beta=0)>0$. On the event $\{\nu=0\}$, the left-hand side of (\ref{eq_exp_p0>0}) becomes
\begin{align*}
   &\mathbb  E\left[\frac{\beta_0}{\eta_0}\sum_{k\geq0}\dfrac{\Gamma(\alpha+p)\Gamma(\alpha_c+k+1)}{\Gamma(\alpha_c+1)\Gamma(\alpha_p+k)}(1-\beta_0)^k \textbf{1}_{\{\nu=0\}}\right]
   \approx\mathbb  E\left[\frac{\beta_0}{(1-(1-\beta_0))^{s}}\right]\approx \mathbb E\left[\beta_0^{1-s}\right] 
\end{align*}
which is infinite by Lemma \ref{lemma_1overbeta}. 
	\end{proof}
 Now Theorem \ref{positive speed Thm} is obtained from Proposition \ref{01>0}, \ref{01=0} and \ref{0>0}.

	\bibliographystyle{plain}
	\bibliography{main}
\end{document}